\nonstopmode \numberwithin{equation}{section}
\newtheorem{thm}{Theorem}[section]
\newtheorem{lem}{Lemma}[section]
\newtheorem{cor}[thm]{Corollary}
\newtheorem{prop}[thm]{Proposition}
\newtheorem{step}{Step}[section]
\theoremstyle{definition}
\newtheorem{mlem}{Main lemma}[section]
\newtheorem{assertion}{Assertion}[section]
\newtheorem{cl}{Claim}[section]
\newtheorem{ca}{Case}[section]
\newtheorem{sca}{Subcase}[section]
\newtheorem{scl}{Subclaim}[section]
\newtheorem{conj}[thm]{Conjecture}
\newtheorem{fact}{Fact}[section]
\newtheorem{defn}[thm]{Definition}
\newtheorem{op}[thm]{Open Problem}
\newtheorem{ques}[thm]{Question}
\newtheorem{rem}[thm]{Remark}
\newtheorem{exam}[thm]{Example}
\numberwithin{equation}{section}
\newcounter {own}
\def\theown {\thesection       .\arabic{own}}
\newenvironment{pf}[1][]{%
 \vskip 3mm
 \noindent
 \ifthenelse{\equal{#1}{}}%
  {{\slshape Proof. }}%
  {{\slshape #1.} }%
 }%
{\qed\bigskip}
\newcounter{alphabet}
\newcounter{tmp}
\newenvironment{Thm}[1][]{\refstepcounter{alphabet}%
\bigskip%
\noindent%
{\bf Theorem \Alph{alphabet}}%
\ifthenelse{\equal{#1}{}}{}{ (#1)}%
{\bf .} \itshape}{\vskip 8pt}
\newcommand{\Ref}[1]{\@ifundefined{r@#1}{}{\setcounter{tmp}{\ref{#1}}\Alph{tmp}}}
\newcounter{alphabet2}
\def\be{\begin{equation}}
\def\ee{\end{equation}}
\newcommand{\ben}{\begin{enumerate}}
\newcommand{\een}{\end{enumerate}}
\newcommand{\blem}{\begin{lem}}
\newcommand{\elem}{\end{lem}}
\newcommand{\bthm}{\begin{thm}}
\newcommand{\ethm}{\end{thm}}
\newcommand{\bcor}{\begin{cor}}
\newcommand{\ecor}{\end{cor}}
\newcommand{\beg}{\begin{exam}}
\newcommand{\eeg}{\end{exam}}
\newcommand{\begs}{\begin{examples}}
\newcommand{\eegs}{\end{examples}}
\newcommand{\bdefe}{\begin{defn}}
\newcommand{\edefe}{\end{defn}}
\newcommand{\bprob}{\begin{prob}}
\newcommand{\eprob}{\end{prob}}
\newcommand{\bques}{\begin{ques}}
\newcommand{\eques}{\end{ques}}
\newcommand{\bei}{\begin{itemize}}
\newcommand{\eei}{\end{itemize}}
\newcommand{\bcon}{\begin{conj}}
\newcommand{\econ}{\end{conj}}
\newcommand{\bop}{\begin{op}}
\newcommand{\eop}{\end{op}}
\newcommand{\bas}{\begin{assertion}}
\newcommand{\eas}{\end{assertion}}
\newcommand{\bfa}{\begin{fact}}
\newcommand{\efa}{\end{fact}}
\newcommand{\bca}{\begin{ca}}
\newcommand{\eca}{\end{ca}}
\newcommand{\bst}{\begin{step}}
\newcommand{\est}{\end{step}}
\newcommand{\bsca}{\begin{sca}}
\newcommand{\esca}{\end{sca}}
\newcommand{\bcl}{\begin{cl}}
\newcommand{\ecl}{\end{cl}}
\newcommand{\bmlem}{\begin{mlem}}
\newcommand{\emlem}{\end{mlem}}
\newcommand{\bscl}{\begin{scl}}
\newcommand{\escl}{\end{scl}}
\newcommand{\bcons}{\begin{conjs}}
\newcommand{\econs}{\end{conjs}}
\newcommand{\bprop}{\begin{prop}}
\newcommand{\eprop}{\end{prop}}
\newcommand{\br}{\begin{rem}}
\newcommand{\er}{\end{rem}}
\newcommand{\brs}{\begin{rems}}
\newcommand{\ers}{\end{rems}}
\newcommand{\bo}{\begin{obser}}
\newcommand{\eo}{\end{obser}}
\newcommand{\bos}{\begin{obsers}}
\newcommand{\eos}{\end{obsers}}
\newcommand{\bpf}{\begin{pf}}
\newcommand{\epf}{\end{pf}}
\newcommand{\ba}{\begin{array}}
\newcommand{\ea}{\end{array}}
\newcommand{\beq}{\begin{eqnarray}}
\newcommand{\beqq}{\begin{eqnarray*}}
\newcommand{\eeq}{\end{eqnarray}}
\newcommand{\eeqq}{\end{eqnarray*}}
\newcommand{\ds}{\displaystyle}
\newcounter{minutes}\setcounter{minutes}{\time}
\newcounter{hours}\setcounter{hours}{\time}
\begin{document}

\bibliographystyle{amsplain}
\title []
{Some sharp Schwarz-Pick type estimates and their applications   of harmonic and pluriharmonic functions}

\def\thefootnote{}
\footnotetext{ \texttt{\tiny File:~\jobname .tex,
          printed: \number\day-\number\month-\number\year,
          \thehours.\ifnum\theminutes<10{0}\fi\theminutes}
} \makeatletter\def\thefootnote{\@arabic\c@footnote}\makeatother

\author{Shaolin Chen}
 \address{Sh. Chen, College of Mathematics and
Statistics, Hengyang Normal University, Hengyang, Hunan 421008,
People's Republic of China.} \email{mathechen@126.com}

\author{Hidetaka Hamada}
\address{H. Hamada, Faculty of Science and Engineering, Kyushu Sangyo University,
3-1 Matsukadai 2-Chome, Higashi-ku, Fukuoka 813-8503, Japan.}
\email{ h.hamada@ip.kyusan-u.ac.jp}


\subjclass[2000]{Primary: 31B05; Secondary:  32U05, 30C80.}
 \keywords{Bohr phenomenon, harmonic function, pluriharmonic function,  Schwarz-Pick type lemma of arbitrary order,
 sharp Schwarz-Pick type estimate.}

\begin{abstract}
The purpose of this paper is to study the Schwarz-Pick type inequalities for harmonic or pluriharmonic functions.
By analogy with the generalized Khavinson conjecture, we first
give some sharp  estimates of the norm of harmonic functions from the Euclidean unit ball
 in $\mathbb{R}^n$ into  the unit ball of the real Minkowski space.
Next,
we give several sharp Schwarz-Pick type inequalities for
pluriharmonic functions from the Euclidean unit ball in $\mathbb{C}^n$
or from the unit polydisc in $\mathbb{C}^n$
into the unit ball of the Minkowski space.
Furthermore,
we establish some sharp coefficient type
Schwarz-Pick  inequalities
for pluriharmonic functions defined in the Minkowski space. Finally, we use the obtained  Schwarz-Pick type inequalities to
discuss the Lipschitz continuity,  the Schwarz-Pick type lemmas of arbitrary order and the Bohr phenomenon of harmonic  or pluriharmonic
functions.
\end{abstract}

\maketitle \pagestyle{myheadings} \markboth{ Sh. Chen and H. Hamada}{Some sharp Schwarz-Pick type estimates  and their applications}

\section{Introduction}

Let $\mathbb{C}^{n}$ be the complex space  of dimension $n$, where
$n$ is a positive integer. We can also interpret $\mathbb{C}^{n}$ as the
real $2n$-space $\mathbb{R}^{2n}$. We use $\ell_{p}^{n}$ to denote  the Minkowski space  defined by
$\mathbb{C}^{n}$ together with the $p$-norm

$$ \|z\|_{p}:=\begin{cases}
\displaystyle \left(\sum_{j=1}^{n}|z_{j}|^{p}\right)^{1/p}, &\, p\in[1,\infty),\\
 \displaystyle \max_{1\leq j\leq n}|z_{j}|, &\,
p=\infty.
\end{cases}$$
It is well known that $\ell_{p}^{n}$ is a Banach space. For
$p\in[1,\infty]$, let
$\mathbb{B}_{\ell_{p}^{n}}:=\{z\in\mathbb{C}^{n}:~\|z\|_{p}<1\}$ and $\mathbf{B}_{\ell_{p}^{n}}:=\{x\in\mathbb{R}^{n}:~\|x\|_{p}<1\}$.
In particular, let $\mathbb{D}:=\mathbb{B}_{\ell_{p}^{1}}$.

The
classical Schwarz-Pick lemma states that an analytic function $f$
of $\mathbb{D}$ into itself satisfies

\begin{equation}\label{eq-sp}
|f'(z)|\leq\frac{1-|f(z)|^{2}}{1-|z|^{2}},~z\in\mathbb{D}.
\end{equation}
For complex-valued harmonic functions $f$ of $\mathbb{D}$ into itself, Colonna \cite{Co-1989} proved the following sharp Schwarz-Pick lemma:
\be\label{Col-89}\left|\frac{\partial f(z)}{\partial z}\right|+\left|\frac{\partial f(z)}{\partial \overline{z}}\right|\leq
\frac{4}{\pi}\frac{1}{1-|z|^{2}},~z\in\mathbb{D}.\ee

For real-valued harmonic functions
$u$ of $\mathbb{D}$
into $(-1,1)$,
there is the following Schwarz-Pick lemma in \cite[Theorem 6.26]{ABR}:
\be\label{2001}\|\nabla u(0)\|_{2}\leq\frac{4}{\pi}.\ee
Since the inequality (\ref{2001}) can be rewritten into the following form
for an analytic function $f$ in $\mathbb{D}$ (cf. \cite{M-1950}):
$$|f'(z)|\leq\frac{4}{\pi}\frac{1}{1-|z|^{2}}\sup_{w\in\mathbb{D}}|{\rm Re}(f(w))|,$$
where $\mbox{Re}(f)$ means
the real part of $f$,
it may be considered as a version of the Schwarz-Pick lemma for real valued harmonic functions.
Kalaj and Vuorinen \cite{K-1}  improved the classical inequality (\ref{2001}) into the following sharp form:
\be\label{KV-2011}\|\nabla u(z)\|_{2}\leq\frac{4}{\pi}\frac{1-|u(z)|^{2}}{1-|z|^{2}},~z\in\mathbb{D}.\ee

In the book of Protter and Weinberger \cite{PW}, there is the following estimate of the gradient of
a  harmonic function of a domain $\Omega\subset\mathbb{R}^{n}~(n\geq2)$ into $\mathbb{R}$:
\be\label{PW-1-1} \|\nabla u(x)\|_{2}\leq\frac{n\omega_{n-1}}{(n-1)\omega_{n}d_{\Omega}(x)}{\rm osc}_{\Omega}(u),\ee
where $\omega_{n}$ is the volume of $\mathbf{B}_{\ell_{2}^{n}}$, $\omega_{n-1}$ is the area of $\partial\mathbf{B}_{\ell_{2}^{n}}$, ${\rm osc}_{\Omega}(u)$
is the oscillation of $u$ in $\Omega$, and $d_{\Omega}(x)$ is the distance from
$x$ to the boundary $\partial\Omega$ of $\Omega$.
(\ref{PW-1-1}) is a consequence of the following inequality:

$$\|\nabla u(0)\|_{2}\leq\frac{2n\omega_{n-1}}{(n-1)\omega_{n}R}\sup_{\|y\|_{2}<R}|u(y)|.$$
We refer the reader to see \cite{KM} for more details.
For any fixed $x\in\mathbf{B}_{\ell_{2}^{n}}$, let $C(x)$ be the smallest number such that
the following inequality
$$\|\nabla u(x)\|_{2}\leq C(x)\sup_{y\in\mathbf{B}_{\ell_{2}^{n}}}|u(y)|$$ holds for all
bounded harmonic functions $u$ of $\mathbf{B}_{\ell_{2}^{n}}$ into $\mathbb{R}$.
Similarly, for $x\in\mathbf{B}_{\ell_{2}^{n}}$ and $\iota\in\partial\mathbf{B}_{\ell_{2}^{n}},$
denote by $C(x,\iota)$ the smallest number such that the inequality

$$|\langle\nabla u(x),\iota\rangle|\leq C(x,\iota)\sup_{y\in\mathbf{B}_{\ell_{2}^{n}}}|u(y)|$$
holds for all
bounded harmonic functions $u$ of $\mathbf{B}_{\ell_{2}^{n}}$ into $\mathbb{R}$,
where $\langle\cdot,\cdot\rangle$ is the Euclidean inner product on
$\mathbb{R}^{n}$.
Since $$\|\nabla u(x)\|_{2}=\sup_{\iota\in\partial\mathbf{B}_{\ell_{2}^{n}}}|\langle\nabla u(x),\iota\rangle|,$$
we see that $$C(x)=\sup_{\iota\in\partial\mathbf{B}_{\ell_{2}^{n}}}C(x,\iota).$$
In \cite[p. 171]{KM}, Kresin and Maz'ya posed the generalized Khavinson problem for bounded harmonic
functions of $\mathbf{B}_{\ell_{2}^{n}}$ into $\mathbb{R}$ as follows (see also \cite[p. 220]{Kh}, \cite[Conjecture 1]{Adv-19} and
\cite[Conjecture 1]{Liu-19}).

\begin{conj}\label{Conj-1} For $x\in\mathbf{B}_{\ell_{2}^{n}}\backslash\{0\}$, we have

$$C(x)=C(x,\mathbf{n}_{x}),$$ where
$\mathbf{n}_{x}:=x/\|x\|_2$ is the unit outward normal vector to the sphere $\|x\|_{2}\partial\mathbf{B}_{\ell_{2}^{n}}:=\{y\in\mathbb{R}^{n}:~\|y\|_{2}=\|x\|_{2}\}$
at $x$.
\end{conj}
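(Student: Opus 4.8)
The plan is to turn the conjecture into an explicit extremal problem on the sphere by means of the Poisson representation, and then to exploit the group $O(n-1)$ of rotations fixing $x$.

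First I would record the exact value of $C(x,\iota)$. Every bounded harmonic $u$ on $\mathbf{B}_{\ell_2^n}$ is the Poisson integral of a boundary function $\varphi\in L^\infty(\partial\mathbf{B}_{\ell_2^n})$ with $\sup_{\mathbf{B}_{\ell_2^n}}|u|=\|\varphi\|_\infty$, where
$$P(x,\xi)=\frac{1}{\omega_{n-1}}\,\frac{1-\|x\|_2^2}{\|x-\xi\|_2^{\,n}},\qquad \xi\in\partial\mathbf{B}_{\ell_2^n}.$$
Differentiating under the integral sign gives $\langle\nabla u(x),\iota\rangle=\int_{\partial\mathbf{B}_{\ell_2^n}}\langle\nabla_xP(x,\xi),\iota\rangle\,\varphi(\xi)\,d\sigma(\xi)$, and choosing $\varphi=\operatorname{sgn}\langle\nabla_xP(x,\cdot),\iota\rangle$ shows that the extremal constant is
$$C(x,\iota)=\int_{\partial\mathbf{B}_{\ell_2^n}}\bigl|\langle\nabla_xP(x,\xi),\iota\rangle\bigr|\,d\sigma(\xi).$$
Thus the conjecture becomes the assertion that this integral, as a function of the unit vector $\iota$, attains its maximum at $\iota=\mathbf{n}_x$.

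Next I would normalise $x=re_1$ with $r=\|x\|_2\in(0,1)$, so that $\mathbf{n}_x=e_1$. A direct computation of $\nabla_xP$ shows that its radial component $A(\xi):=\langle\nabla_xP,e_1\rangle$ depends only on $\xi_1=\langle\xi,e_1\rangle$ and $D=\|x-\xi\|_2$, whereas for any unit $v\perp e_1$ its tangential component is $B(\xi):=\langle\nabla_xP,v\rangle=\frac{n(1-r^2)}{\omega_{n-1}}\,\frac{\langle\xi,v\rangle}{D^{\,n+2}}$. Writing a general direction as $\iota=\cos\gamma\,e_1+\sin\gamma\,v$ and applying the reflection $\xi\mapsto\xi-2\langle\xi,v\rangle v$, which preserves $d\sigma$ and $A$ but reverses the sign of $B$, averaging the two resulting integrals yields
$$C(x,\iota)=\int_{\partial\mathbf{B}_{\ell_2^n}}\max\bigl(\cos\gamma\,|A(\xi)|,\ \sin\gamma\,|B(\xi)|\bigr)\,d\sigma(\xi)=:F(\gamma),\qquad\gamma\in[0,\tfrac{\pi}{2}].$$
By $O(n-1)$-invariance $F$ is independent of the choice of $v$ and even in $\gamma$, and the conjecture is exactly the inequality $F(\gamma)\le F(0)=\int_{\partial\mathbf{B}_{\ell_2^n}}|A|\,d\sigma$.

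The heart of the matter is this last inequality, which I expect to be the main obstacle. For each fixed $\xi$ the integrand $\gamma\mapsto\max(\cos\gamma|A(\xi)|,\sin\gamma|B(\xi)|)$ is first decreasing and then increasing, hence individually maximised at an endpoint; but a sum of such valley-shaped functions can develop an interior maximum, so a pointwise argument is not available and one must estimate the integrals. Splitting $\partial\mathbf{B}_{\ell_2^n}$ into $E_1=\{\cos\gamma|A|\ge\sin\gamma|B|\}$ and its complement $E_2$ gives
$$F(0)-F(\gamma)=\int_{E_1}(1-\cos\gamma)\,|A|\,d\sigma+\int_{E_2}\bigl(|A|-\sin\gamma\,|B|\bigr)\,d\sigma,$$
where the first term is nonnegative and the second may be negative; the whole problem is to show that the first compensates the second. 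To make this tractable I would pass to spherical coordinates $\xi=\cos\phi\,e_1+\sin\phi\,\eta$ with $\eta\in\mathbb{S}^{n-2}\subset e_1^\perp$, so that $A$ depends only on $\phi$ while $|B|$ enters through $\sin\phi\,|\langle\eta,v\rangle|$; integrating out $\eta$ reduces $F(0)$ to a one-dimensional integral in $\phi$ against the explicit weight $\sin^{n-2}\phi$ and the kernel built from $D=\sqrt{1-2r\cos\phi+r^2}$, and organises the $E_2$-contribution around the locus where $A$ changes sign as $\xi_1$ runs through $[-1,1]$. The delicate point, and the reason the conjecture is genuinely hard in high dimensions, is to control the competition between the radial part $A$ and the tangential part $B$ near the crossover locus; here an analysis of $F'(\gamma)$ off that locus, combined with sharp estimates for the resulting hypergeometric-type $\phi$-integrals, seems to be unavoidable.
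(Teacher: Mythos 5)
The first thing to say is that the paper contains no proof of this statement: it is recorded as a conjecture of Kresin and Maz'ya, and the paper only cites its resolution for $n\geq3$ by Liu (Theorem \Ref{Liu-2019}, \cite{Liu-19}), after the partial results of Markovi\'c, Kalaj ($n=4$) and Melentijevi\'c ($n=3$). So your proposal has to be measured against that cited literature. Your reduction is correct and is in fact the standard opening of all of those papers: the Poisson representation with $\varphi=\operatorname{sgn}\langle\nabla_xP(x,\cdot),\iota\rangle$ gives $C(x,\iota)=\int_{\partial\mathbf{B}_{\ell_{2}^{n}}}|\langle\nabla_xP(x,\xi),\iota\rangle|\,d\sigma(\xi)$; the computation of the radial and tangential components $A$ and $B$ is right; and the reflection--averaging step, via $|a+b|+|a-b|=2\max(|a|,|b|)$, correctly yields $C(x,\iota)=\int\max(\cos\gamma\,|A|,\sin\gamma\,|B|)\,d\sigma=F(\gamma)$. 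Your observation that a pointwise comparison cannot close the argument (each $\gamma\mapsto\max(\cos\gamma|A(\xi)|,\sin\gamma|B(\xi)|)$ is only quasi-convex, and an integral of quasi-convex functions need not be) is also accurate.

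The gap is that you stop exactly where the conjecture begins. The inequality $F(\gamma)\leq F(0)$ for $\gamma\in(0,\pi/2]$ \emph{is} the statement, and your proposal supplies no argument for it beyond a description of what such an argument would have to control. Two points show this is not a routine verification that can be deferred. First, the inequality is actually false for $n=2$: there the maximum of $C(x,\iota)$ over $\iota$ is attained in the tangential direction, $C(x)=C(x,\mathbf{t}_x)>C(x,\mathbf{n}_x)$ for $x\neq0$ (Kresin--Maz'ya), so any correct proof must use $n\geq3$ in an essential, quantitative way --- and nothing in your proposal does. Second, the proofs that do exist (Melentijevi\'c for $n=3$, Liu for all $n\geq3$) consist precisely of the delicate analysis you label ``unavoidable'': after integrating out the $(n-2)$-sphere one must establish monotonicity-type inequalities for the resulting one-dimensional integrals in $\phi$ (Gegenbauer/hypergeometric-type kernels built from $D=\sqrt{1-2r\cos\phi+r^2}$), and controlling the sign-change locus of $A$ against the tangential contribution is where all the work lies. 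As written, your text is a correct and well-organized restatement of the problem in its standard extremal form, not a proof; the analytic core that distinguishes $n\geq3$ from $n=2$ is entirely missing.
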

In 2017, Markovi\'c \cite{Mar} proved this conjecture when $x$ is near the boundary of the unit ball.
Kalaj \cite{K-2017} showed that the conjecture is true for $n=4$. In 2019, Melentijevi\'c \cite{Adv-19}
 proved a result, which confirmed the conjecture for $n=3$.
See \cite[Chapter 6]{KM-2012} for solutions of various Khavinson-type extremal problems for harmonic functions
on $\mathbf{B}_{\ell_{2}^{n}}$ and on a half-space in $\mathbb{R}^{n}.$
    Very recently, Liu \cite{Liu-19} showed the following result, which confirmed the generalized Khavinson conjecture for  $n\geq3$.

\begin{Thm}{\rm (\cite[Theorem 2]{Liu-19})}\label{Liu-2019}
For $n\geq3$, if $u$ is a bounded harmonic function of $\mathbf{B}_{\ell_{2}^{n}}$ into
$\mathbb{R}$, then we have the following sharp inequality:

$$\|\nabla u(x)\|_{2}\leq
\frac{c_{n}}{1-\|x\|_{2}^{2}}\left\{\int_{-1}^{1}\frac{\left|t-\frac{n-2}{n}\|x\|_{2}\right|(1-t^{2})^{\frac{n-3}{2}}}
{(1-2t\|x\|_{2}+\|x\|_{2}^{2})^{\frac{n-2}{2}}}dt\right\}
\|u\|_{\infty},~x\in\mathbf{B}_{\ell_{2}^{n}},$$
where $c_{n}=\frac{2\Gamma\left(\frac{n+2}{2}\right)}{\Gamma\left(\frac{1}{2}\right)\Gamma\left(\frac{n-1}{2}\right)}$ and
$\Gamma(x)$ is the Gamma function.
\end{Thm}

For mappings with values in higher dimensional spaces,
Pavlovi\'c \cite{P-1,P-2019} showed that the inequality (\ref{eq-sp}) does not hold for analytic functions $f$ of
$\mathbb{D}$ into $\mathbb{B}_{\ell_{2}^{k}}$, where $k\geq2$ is an integer.
For example, the function $f(z)=(z,1)/\sqrt{2}$ for $z\in\mathbb{D}$ satisfies
$$\|f'(0)\|_{2}=\sqrt{1-\|f(0)\|_{2}^{2}}>1-\|f(0)\|_{2}^{2}.$$
However, Pavlovi\'c  proved the following Schwarz-Pick type lemma for analytic functions $f$ of
$\mathbb{D}$ into $\mathbb{B}_{\ell_{2}^{n}}$:
\[
|\nabla\|f(z)\|_{2}|\leq\frac{1-\|f(z)\|_{2}^{2}}{1-|z|^{2}},
\quad z\in\mathbb{D},
\]
where $\nabla\|f(z)\|_{2}$ denotes the  gradient of $\|f\|_{2}$.

In general, for $p\in[1,\infty]$, let $f$ be a  differentiable   function of $\mathbf{B}_{\ell_{2}^{n}}$
into $\mathbf{B}_{\ell_{p}^{k}}$, where  $k$ is a positive integer. Denote by

$$|\nabla \|f(z)\|_{p}|:=\limsup_{w\rightarrow z}\frac{|\|f(z)\|_{p}-\|f(w)\|_{p}|}{\|z-w\|_{2}}$$
the  gradient of $\|f\|_{p}$.

By analogy with the  generalized Khavinson Conjecture, it is nature to ask the following question
for mappings with values in higher dimensional spaces.

\begin{ques}\label{q-1}
For $p\in(1,\infty)$ and $n\geq3$, let $u$ be a harmonic function of $\mathbf{B}_{\ell_{2}^{n}}$  into $\mathbf{B}_{\ell_{p}^{\nu}}$,
where $\nu$ is a positive integer. Is there the sharp upper bound $C^{\ast}(x)$ such that
the following inequality
$$|\nabla \|u(x)\|_{p}|\leq C^{\ast}(x),~x\in\mathbf{B}_{\ell_{2}^{n}}$$ holds?
\end{ques}



 In \cite{Zhu}, Zhu considered Question \ref{q-1} for pluriharmonic functions (see Section \ref{csw-sec1} for the definition)
 and established a  Schwarz-Pick type estimate for pluriharmonic functions $f$ of
$\mathbb{B}_{\ell_{2}^{n}}$ into itself as follows.


\begin{Thm}{\rm (\cite[Theorem 1.1]{Zhu})}\label{Z-1}
For $n\geq1$, let $f$ be a pluriharmonic function of $\mathbb{B}_{\ell_{2}^{n}}$ into itself. Then the following inequality
$$|\nabla\|f(z)\|_{2}|\leq\frac{4\sqrt{n}}{\pi}\frac{1}{ (1-\|z\|_{2})}$$ holds for all $z\in\mathbb{B}_{\ell_{2}^{n}}.$
\end{Thm}

In \cite{Zhu}, Zhu also obtained the following Schwarz-Pick type lemma for
pluriharmonic functions of $\mathbb{B}_{\ell_{2}^{n}}$ into itself.

\begin{Thm}{\rm (\cite[Theorem 1.2]{Zhu})}\label{Z-2}
Let $f$ be a pluriharmonic function of $\mathbb{B}_{\ell_{2}^{n}}$ into itself.
Then
the following inequality
\[
\sum_{j=1}^{n}\left(\left|\frac{\partial f_{j}(z)}{\partial z_{k}}\right|^{2}+
\left|\frac{\partial f_{j}(z)}{\partial\overline{z}_{k}}\right|^{2}\right)
\leq\frac{1-\|f(z)\|_{2}^{2}}{(1-\|z\|_{2})^{2}},
\quad k=1,\dots, n
\]
holds for any $z\in \mathbb{B}_{\ell_{2}^{n}}$.
\end{Thm}


Let us recall the following classical result which we call the coefficient type Schwarz-Pick   lemma of analytic functions (cf. \cite{Neh}):
If $f$ is an analytic function of $\mathbb{D}$ into itself with $f(z)=\sum_{n=0}^{\infty}a_{n}z^{n},$ then for each $n\geq1$,
\be\label{Coe-1}|a_{n}|\leq1-|a_{0}|^{2}.\ee

By (\ref{Coe-1}), we have
\be\label{lf-1}|f'(0)|\leq1-|f(0)|^{2}.\ee
For any fixed $z\in\mathbb{D}$, let $F(w)=f(\phi(w))$,
where $\phi(w)=(z+w)/(1+\overline{z}w)$ for $w\in\mathbb{D}$. It follows from (\ref{lf-1}) that
$$|f'(z)|\left(1-|z|^{2}\right)=|F'(0)|\leq1-|F(0)|^{2}=1-|f(z)|^{2},$$
which implies that the
classical Schwarz-Pick lemma (\ref{eq-sp}) is a special case of (\ref{Coe-1}).

In \cite[Lemma 1]{CPW-2011},
the coefficient type Schwarz-Pick lemma of complex-valued harmonic functions
was established as follows:

\begin{Thm}\label{Coe-2}
If $f=h+\overline{g}$ is a complex-valued harmonic function of $\mathbb{D}$ into itself with $h(z)=\sum_{n=0}^{\infty}a_{n}z^{n}$ and $g(z)=\sum_{n=1}^{\infty}b_{n}z^{n}$, then $|a_{0}|\leq1$ and for all $n\geq1$,
\[
|a_{n}|+|b_{n}|\leq\frac{4}{\pi}.
\]
\end{Thm}
Applying the M\"obius transformation and Theorem \Ref{Coe-2}, we can easily obtain (\ref{Col-89}) (see \cite[Theorem 4]{CPW-2012}).


 In
1920, Sz\'asz \cite{S} extended the inequality (\ref{eq-sp}) to the
following estimate involving higher order derivatives:

\begin{equation}\label{eq-sz-1}
|f^{(2k+1)}(z)|\leq\frac{(2k+1)!}{(1-|z|^{2})^{2k+1}}\sum_{j=0}^{k}{k\choose
j}^{2}|z|^{2j},
\end{equation} where $k\in\{1,2,\ldots\}.$  Later,
Ruscheweyh (cf. \cite{AW,AW-09}) improved (\ref{eq-sz-1}) to the
following sharp form:

\begin{Thm}\label{Higher-1}
Let $f$ be an analytic function of $\mathbb{D}$ into itself.
Then
$$|f^{(k)}(z)|\leq\frac{k!(1-|f(z)|^{2})}{(1-|z|)^{k}(1+|z|)},
\quad z\in \mathbb{D},
$$
where $k\in\{1,2,\ldots\}.$
\end{Thm}
On the   Schwarz-Pick type estimates for derivatives of arbitrary order on bounded analytic functions of $\mathbb{B}_{\ell_{2}^{n}}$ into $\mathbb{C}$,
see \cite{DCP,LC}.

In 1914,  Bohr \cite{B} proved the following remarkable result on
power series in one complex variable:

\begin{Thm}{\rm (Bohr)}\label{Bohr-t3}
There exists $\rho\in(0,1)$ with the property that if a power series
$\sum_{k=0}^{\infty}a_{k}z^{k}$ converges in the unit disk and its
sum has modulus less than $1$, then
\[
\sum_{k=0}^{\infty}|a_{k}z^{k}|<1,
\quad  \mbox{ for all } |z|<\rho.
\]
\end{Thm}
Bohr's paper \cite{B}, compiled by   Hardy from correspondence,
indicates that Bohr initially proved the radius $\rho=1/6$, but this
was quickly improved to the sharp constant  $\rho=1/3$ by  Riesz,
Schur, and  Wiener, independently.

In the following, we write an
$n$-variable power series $\sum_{\alpha}a_{\alpha}z^{\alpha}$ using
the standard multi-index notation: $\alpha$ denotes an $n$-tuple
$(\alpha_{1},\ldots,\alpha_{n})$ of nonnegative integers, $|\alpha|$
denotes the sum $\sum_{k=1}^{n}\alpha_{k}$ of its components,
$\alpha!$ denotes the product $\prod_{k=1}^{n}\alpha_{k}!$ of the
factorials of its components, $z$ denotes an $n$-tuple
$(z_{1},\ldots,z_{n})$ of complex numbers, and $z^{\alpha}$ denotes
the product $\prod_{k=1}^{n}z_{k}^{\alpha_{k}}$.

For
$p\in[1,\infty]$, we
denote by $\mathscr{H}(\mathbb{B}_{\ell_{p}^{n}})$  the set of all
holomorphic functions of $\mathbb{B}_{\ell_{p}^{n}}$ into
$\mathbb{C}$.
Set
$\mathscr{H}_{1}(\mathbb{B}_{\ell_{p}^{n}})=\{f\in\mathscr{H}(\mathbb{B}_{\ell_{p}^{n}}):
\sup_{z\in\mathbb{B}_{\ell_{p}^{n}}}|f(z)|\leq1\}$. We use
$\mathcal{R}(\mathbb{B}_{\ell_{p}^{n}})$ to denote the largest
non-negative number $\rho$ with the property that if
$f(z)=\sum_{\alpha}a_{\alpha}z^{\alpha}\in\mathscr{H}_{1}(\mathbb{B}_{\ell_{p}^{n}})$,
then $\sum_{\alpha}|a_{\alpha}z^{\alpha}|\leq 1$ in
$\rho\mathbb{B}_{\ell_{p}^{n}}:=\{z\in\mathbb{C}^{n}:~\|z\|_{p}<\rho\}$. We also call
$\mathcal{R}(\mathbb{B}_{\ell_{p}^{n}})$ the $n$-dimensional Bohr
radius.

When $n>1$,
the exact value of the Bohr radius
$\mathcal{R}(\mathbb{B}_{\ell_{p}^{n}})$
is still unknown.
In the following result, Boas and  Khavinson \cite[Theorem 2]{BK} showed the upper estimate and Defant et al. in
\cite[Theorem 2]{D-2011} showed the lower estimate:

\begin{Thm}\label{eq-Def-2011}
There exists a constant
$\mathbf{b}_{n}$ such that
%
\[
\mathcal{R}(\mathbb{B}_{\ell_{\infty}^{n}})=\mathbf{b}_{n}\sqrt{\frac{\log
n}{n}}
\]
 with
$1/\sqrt{2}+o(1)\leq \mathbf{b}_{n}\leq2.$
\end{Thm}
Bayart, Pellegrino and Seoane-Sep\'ulveda  (\cite{BPS})  proved the
exact asymptotical behaviour of
$\mathcal{R}(\mathbb{B}_{\ell_{\infty}^{n}})$ as follows:

\begin{Thm}\label{IB}
\[
\lim_{n\rightarrow\infty}\frac{\mathcal{R}(\mathbb{B}_{\ell_{\infty}^{n}})}{\sqrt{\frac{\log
n}{n}}}=1.
\]
\end{Thm}
In the case  $p\in[1,\infty)$, the results of Boas, Defant and  Frerick
 from \cite{B-2000,DF-11} showed the following result:

\begin{Thm}\label{Bohr-t1}
There is a constant
$C\geq1$ such that for each $p\in[1,\infty)$ and  $n\geq2$

\[
\frac{1}{C}\left(\frac{\log
n}{n}\right)^{1-\frac{1}{\min\{p,2\}}}\leq\mathcal{R}(\mathbb{B}_{\ell_{p}^{n}})\leq
  C\left(\frac{\log n}{n}\right)^{1-\frac{1}{\min\{p,2\}}}.
\]
\end{Thm}


The first aim of this paper is to establish some Schwarz-Pick type lemmas and discuss their applications.
Based on Theorem \Ref{Liu-2019} by Liu, we give an answer to Question \ref{q-1}
(see Theorem \ref{thm-0}). In Theorem \ref{thm-2+}, we use a new proof method to
improve and generalize Theorem \Ref{Z-1} into the  sharp form. In particular, in Theorem \ref{thm-1},
we also give a sharp estimate for pluriharmonic functions with values in $\mathbf{B}_{\ell_{p}^{\nu}}$. Moreover,
we  obtain the Schwarz-Pick type estimate for pluriharmonic functions which generalizes and improves Theorem \Ref{Z-2} into the sharp form
on $\mathbb{B}_{\ell_{\infty}^{n}}$
(see Theorem \ref{thm-3+}) and we will use it to discuss the Lipschitz characteristic of harmonic functions on a domain in ${\mathbb C}$
(see Proposition \ref{prop-1+}).

The second purpose of this paper is to investigate the coefficient type Schwarz-Pick  lemmas and give their applications.
In Theorems \ref{lem-3.1} and \ref{lem-0},  we extend Theorem \Ref{Coe-2} to the $n$-dimensional case.
Then, by using different proof techniques, we will use  Theorem \ref{lem-3.1} to extend (\ref{Col-89})
and Theorem \Ref{Higher-1} to pluriharmonic functions on
the unit ball $\mathbb{B}_{\ell_{2}^{n}}$, and give an estimate for
the partial derivatives of arbitrary order
(see Theorem \ref{thm-4}).
Furthermore, by using Theorem \ref{lem-0}, we will establish a sharp Schwarz-Pick type inequality of arbitrary order
for some class of pluriharmonic functions on $\mathbb{B}_{\ell_{\infty}^{n}}$
(see Theorem \ref{thm-2}). At last,
by applying Theorem \ref{lem-3.1}, we obtain a Bohr type inequality which is an analogue of Theorems \Ref{eq-Def-2011}
and \Ref{Bohr-t1}
(see Theorem \ref{thm-2.1x}). We also apply
 Theorem \ref{lem-0} to show that
Theorems \Ref{Bohr-t3}, \Ref{IB} and \Ref{Bohr-t1}  also
hold for a more general class
of pluriharmonic functions on $\mathbb{B}_{\ell_{p}^{n}}$
(see Theorem  \ref{thm-2.0x}).


The paper is organized as follows.
In section \ref{csw-sec1},
we give the statements of our results.
In sections \ref{csw-sec2} and \ref{csw-sec3},
we give the proofs of our main results.

\section{Preliminaries and  main results }\label{csw-sec1}


\subsection{The Schwarz-Pick type lemmas and their applications}

Based on Theorem \Ref{Liu-2019} by Liu, we give an answer to Question \ref{q-1} as follows.

\begin{thm}\label{thm-0}
For $p\in(1,\infty)$ and $n\geq3$, let $u$ be a harmonic function of $\mathbf{B}_{\ell_{2}^{n}}$ into $\mathbf{B}_{\ell_{p}^{\nu}}$,
where $\nu$ is a positive integer.
 Then, for $x\in\mathbf{B}_{\ell_{2}^{n}}$, we have the following sharp inequality:
\be\label{t-0} \big|\nabla \|u(x)\|_{p}\big|\leq\frac{c_{n}}{1-\|x\|_{2}^{2}}
 \left\{\int_{-1}^{1}\frac{\left|t-\frac{n-2}{n}\|x\|_{2}\right|(1-t^{2})^{\frac{n-3}{2}}}{(1-2t\|x\|_{2}+\|x\|_{2}^{2})^{\frac{n-2}{2}}}dt\right\},\ee
where $c_{n}$ is the same as in Theorem \Ref{Liu-2019}.
\end{thm}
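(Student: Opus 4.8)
The plan is to reduce the vector-valued estimate to the scalar result of Liu (Theorem~\Ref{Liu-2019}) by testing $u$ against dual unit vectors. Let $q\in(1,\infty)$ be the conjugate exponent of $p$, so that $1/p+1/q=1$. For a fixed $a=(a_{1},\dots,a_{\nu})\in\mathbb{R}^{\nu}$ with $\|a\|_{q}\le 1$, define $v_{a}(x):=\langle a,u(x)\rangle=\sum_{j=1}^{\nu}a_{j}u_{j}(x)$. Since each component $u_{j}$ is harmonic, $v_{a}$ is a real-valued harmonic function on $\mathbf{B}_{\ell_{2}^{n}}$, and H\"older's inequality gives $|v_{a}(x)|\le\|a\|_{q}\|u(x)\|_{p}\le 1$, so $v_{a}$ is bounded with $\sup|v_{a}|\le 1$. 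Thus every such $v_{a}$ falls within the scope of Theorem~\Ref{Liu-2019}.

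The key step is to bound $|\nabla\|u(x)\|_{p}|$ by $\sup_{\|a\|_{q}\le1}\|\nabla v_{a}(x)\|_{2}$. Starting from the definition and the reverse triangle inequality $\big|\|u(x)\|_{p}-\|u(w)\|_{p}\big|\le\|u(x)-u(w)\|_{p}$, one has
$$\big|\nabla\|u(x)\|_{p}\big|\le\limsup_{w\to x}\frac{\|u(x)-u(w)\|_{p}}{\|x-w\|_{2}}.$$
Because $u$ is harmonic it is real-differentiable at $x$, so writing $Du(x)$ for its Jacobian one gets $u(w)-u(x)=Du(x)(w-x)+o(\|w-x\|_{2})$, whence the limsup above is at most the operator norm $\|Du(x)\|_{2\to p}=\sup_{\|\iota\|_{2}\le1}\|Du(x)\iota\|_{p}$. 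Expanding the $\ell_{p}$ norm by $\ell_{p}$--$\ell_{q}$ duality and swapping the two suprema yields
$$\|Du(x)\|_{2\to p}=\sup_{\|a\|_{q}\le1}\big\|Du(x)^{\mathsf T}a\big\|_{2}=\sup_{\|a\|_{q}\le1}\|\nabla v_{a}(x)\|_{2},$$
since $Du(x)^{\mathsf T}a=\big(\partial_{x_{i}}\sum_{j}a_{j}u_{j}(x)\big)_{i}=\nabla v_{a}(x)$.

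It then remains only to apply Theorem~\Ref{Liu-2019} to each scalar harmonic function $v_{a}$: its gradient $\|\nabla v_{a}(x)\|_{2}$ is bounded by the right-hand side of (\ref{t-0}) multiplied by $\sup|v_{a}|\le1$, and taking the supremum over $a$ closes the estimate. For sharpness I would take $u=(v^{\ast},0,\dots,0)$, where $v^{\ast}$ is an extremal real-valued harmonic function for Theorem~\Ref{Liu-2019}; then $\|u(x)\|_{p}=|v^{\ast}(x)|$ maps $\mathbf{B}_{\ell_{2}^{n}}$ into $\mathbf{B}_{\ell_{p}^{\nu}}$ and $\big|\nabla\|u(x)\|_{p}\big|=\|\nabla v^{\ast}(x)\|_{2}$ attains the bound.

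The main obstacle is that $\|\cdot\|_{p}$ fails to be differentiable at the origin, so at points where $u(x)=0$ the chain rule cannot be applied directly. The reverse triangle inequality together with the duality identity $\|u(x)-u(w)\|_{p}=\sup_{\|a\|_{q}\le1}\langle a,u(x)-u(w)\rangle$ circumvents this, converting the problem into a uniform statement about the scalar functions $v_{a}$ and letting the single scalar theorem do all the analytic work. A secondary point requiring care is the legitimacy of interchanging the limit defining $|\nabla\|u\|_{p}|$ with the supremum over $a$, which is handled cleanly by the operator-norm reformulation above.
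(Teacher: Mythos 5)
Your proof is correct and follows essentially the same route as the paper: both reduce the statement to Liu's scalar theorem by pairing $u$ with $\ell_{q}$-unit vectors $a$ and applying that theorem to the bounded harmonic functions $\langle a,u\rangle$, with the same sharpness example $u=(v^{\ast},0,\dots,0)$. The only difference is organizational: the paper splits into the cases $u(x)\neq 0$ and $u(x)=0$ and exhibits the optimal dual vector explicitly in each, whereas your reverse-triangle-inequality and operator-norm formulation handles both cases uniformly.
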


A twice continuously differentiable complex-valued function $f$
defined on a domain $\Omega\subset\mathbb{C}^{n}$ is called a {\it
pluriharmonic function} if for each fixed $z\in \Omega$ and
$\theta\in\partial\mathbb{B}_{\ell_{2}^{n}}$, the function
$f(z+\zeta\theta)$ is harmonic in $\{\zeta:\; |\zeta|<
d_{\Omega}(z)\}$ (cf.
\cite{DHK-2011,R2,Vl}). Obviously, all pluriharmonic functions are harmonic.
If
$\Omega\subset\mathbb{C}^{n}$ is a simply connected domain
containing the origin, then a function $f:\,\Omega\rightarrow
\mathbb{C}$ is pluriharmonic if and only if $f$ has a representation
$f=h+\overline{g},$ where $h$ and $g$ are holomorphic in $\Omega$
with $g(0)=0$ (see \cite{Vl}). Furthermore, a twice continuously
differentiable real-valued function in a simply connected domain
$\Omega$ is pluriharmonic if and only if it is the real part of some
holomorphic function on $\Omega$. In particular, if $n=1$, then
pluriharmonic functions are planar harmonic functions (cf. \cite{Du}).


In the following, we  improve and generalize Theorem \Ref{Z-1} into the  sharp form.

\begin{thm}\label{thm-2+}
For $n\geq1$ and $p\in(1,\infty)$, let $f$ be a pluriharmonic function of $\mathbb{B}_{\ell_{2}^{n}}$ into $\mathbb{B}_{\ell_{p}^{\nu}}$, where  $\nu$ is a positive integer.
Then the following  inequality holds:
\be\label{eq-ch-3.3g}|\nabla \|f(z)\|_{p}|\leq\frac{4}{\pi}\frac{1}{1-\|z\|_{2}^{2}},~z\in\mathbb{B}_{\ell_{2}^{n}}.\ee
Furthermore,  {\rm (\ref{eq-ch-3.3g})} is sharp for each $z\in \mathbb{B}_{\ell_{2}^{n}}$.
\end{thm}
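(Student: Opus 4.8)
The plan is to reduce the general pluriharmonic, higher-dimensional statement to the one-variable Colonna estimate (\ref{Col-89}) via a slicing argument. Fix $z\in\mathbb{B}_{\ell_{2}^{n}}$ and a point $w\in\mathbb{B}_{\ell_{2}^{n}}$ near $z$; by definition of $|\nabla\|f(z)\|_{p}|$ as a $\limsup$ of difference quotients, it suffices to control $\big|\|f(z)\|_{p}-\|f(w)\|_{p}\big|$ for $w$ approaching $z$ along an arbitrary direction $\theta\in\partial\mathbb{B}_{\ell_{2}^{n}}$. Restricting $f$ to the complex line through $z$ in direction $\theta$, set $g(\zeta)=f(z+\zeta\theta)$; since $f$ is pluriharmonic, $g$ is a planar harmonic map of a disk into $\mathbb{B}_{\ell_{p}^{\nu}}$. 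This converts the spatial gradient into a one-variable Wirtinger-derivative estimate at $\zeta=0$, and the norm $\|\theta\|_2=1$ ensures no distortion of the base point.

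Next I would handle the target norm. The composite $\|g(\zeta)\|_{p}$ is a real-valued function of $\zeta$, and its first-order variation is governed by $\partial_\zeta g$ and $\partial_{\bar\zeta}g$. The key observation is that for each component $g_j$ the scalar function $g_j$ maps $\mathbb{D}$ into $\overline{\mathbb{D}}$ (since $\|g(\zeta)\|_p<1$ forces $|g_j(\zeta)|\le\|g(\zeta)\|_p<1$ for $p\ge 1$), so Colonna's sharp bound (\ref{Col-89}) applies componentwise: $\big|\partial_\zeta g_j(0)\big|+\big|\partial_{\bar\zeta}g_j(0)\big|\le \frac{4}{\pi}\frac{1}{1-\|z\|_2^2}$ after the usual Möbius normalization that replaces $|\zeta|^2$ at the base point by $\|z\|_2^2$. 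The task is then to assemble these componentwise bounds into a single bound on the variation of the $\ell_p$-norm, uniformly in the direction $\theta$ and in the unit vector realizing the gradient of the norm.

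The main obstacle, and the technical heart of the argument, is the assembly step: showing that the gradient of $\|f\|_p$ is bounded by $\frac{4}{\pi}(1-\|z\|_2^2)^{-1}$ \emph{without} an extra factor of $\nu^{1/p}$ or $\sqrt{\nu}$ (which would be the naive outcome of summing $\nu$ componentwise estimates). I expect this to require a duality description of the directional derivative of the $p$-norm: writing $\|f(z)\|_p$'s variation as the real part of a pairing $\langle f'(z)\cdot\theta,\;\xi\rangle$ with $\xi$ in the unit ball of the dual $\ell_{p'}$-space (so $\|\xi\|_{p'}=1$), and then estimating that pairing by the worst-case behavior of a \emph{single} convex combination of the component maps rather than their sum. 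Concretely, one forms the analytic/antianalytic scalar $\sum_j \bar\xi_j g_j(\zeta)$, observes it is a harmonic function of $\mathbb{D}$ into $\overline{\mathbb{D}}$ (by $|\sum_j\bar\xi_j g_j|\le\|\xi\|_{p'}\|g\|_p\le 1$ via Hölder), and applies (\ref{Col-89}) to \emph{that one function}, which yields the clean constant $\frac{4}{\pi}$ with no dimensional loss.

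Finally, for sharpness I would exhibit, for each prescribed $z$, an extremal $f$ saturating (\ref{eq-ch-3.3g}). The natural candidate is a rank-one map $f(\zeta')=\big(\varphi(\langle\zeta',\theta_0\rangle),0,\dots,0\big)$ where $\varphi$ is the extremal harmonic self-map of $\mathbb{D}$ for Colonna's inequality (the function attaining equality in (\ref{Col-89}), built from the harmonic map $\tfrac{2}{\pi}\arg\frac{1+\cdot}{1-\cdot}$ up to a Möbius rotation placing the relevant point at $\|z\|_2$), and $\theta_0$ is chosen in the direction that realizes the $\limsup$. Since only one coordinate is active, the $\ell_p$-norm of $f$ equals $|\varphi|$, so the multidimensional and $\ell_p$ features collapse to the scalar case and equality in Colonna's lemma transfers directly to equality in (\ref{eq-ch-3.3g}); I would verify the base-point normalization so that the bound is attained at the given $z$ rather than only at the origin.
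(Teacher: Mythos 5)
Your proposal is correct in substance and rests on the same key mechanism as the paper's proof: the dual pairing $\langle f,\xi\rangle$ with $\xi$ on the unit sphere of $\ell_{q}^{\nu}$ ($\frac1p+\frac1q=1$), which by H\"older is a scalar harmonic map into $\overline{\mathbb{D}}$ to which Colonna's inequality (\ref{Col-89}) applies with no dimensional loss, together with the rank-one extremal built from $\frac{2}{\pi}\arctan\frac{i(\overline{\zeta}-\zeta)}{1-|\zeta|^{2}}$ precomposed with a disk automorphism. Where you genuinely differ is the reduction to one variable: the paper first proves the case $n=1$ at an arbitrary point of $\mathbb{D}$, then for $n\geq2$ slices only at the origin and transports a general $z$ to $0$ by a M\"obius automorphism $\varphi_{z}$ of $\mathbb{B}_{\ell_{2}^{n}}$, paying the factor $\|D(\varphi_{z}^{-1})(z)\|=(1-\|z\|_{2}^{2})^{-1}$; you instead slice by the affine complex line $\zeta\mapsto z+\zeta\theta$ directly at $z$. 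Your route does work, but the phrase ``the usual M\"obius normalization that replaces $|\zeta|^{2}$ by $\|z\|_{2}^{2}$'' hides the one computation that must actually be done: the slice $\{\zeta:\,z+\zeta\theta\in\mathbb{B}_{\ell_{2}^{n}}\}$ is the disk centered at $-\langle z,\theta\rangle$ of radius $r=(1-\|z\|_{2}^{2}+|\langle z,\theta\rangle|^{2})^{1/2}$, the base point $\zeta=0$ is \emph{not} at modulus $\|z\|_{2}$ after normalizing this disk to $\mathbb{D}$, and Colonna's lemma on it yields the bound $\frac{4}{\pi}\cdot\frac{r}{r^{2}-|\langle z,\theta\rangle|^{2}}=\frac{4}{\pi}\cdot\frac{r}{1-\|z\|_{2}^{2}}\leq\frac{4}{\pi}\cdot\frac{1}{1-\|z\|_{2}^{2}}$, so the estimate closes (indeed with a direction-dependent improvement); this identity is what your sketch must supply in place of the paper's automorphism step. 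Two smaller points to make explicit: the dual vector $\xi$ must be allowed to depend on the direction when $f(z)=0$, where $\|\cdot\|_{p}$ is not differentiable --- exactly the case distinction the paper makes via Lemma \ref{lem-Schwarz-def} and the second step of each case --- and for sharpness one should verify, as the paper does, that the extremal satisfies $f(z)=0$ at the prescribed point, so that $\|f\|_{p}=|f_{1}|$ and equality in the scalar Colonna estimate transfers verbatim.
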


In particular, if $u$ is a pluriharmonic function of $\mathbb{B}_{\ell_{2}^{n}}$ into $\mathbf{B}_{\ell_{p}^{\nu}}$, then we have
a better estimate as follows, where $\nu$ is a positive integer.

\begin{thm}\label{thm-1}
For $n\geq1$ and $p\in(1,\infty)$, let $u$ be a pluriharmonic function of $\mathbb{B}_{\ell_{2}^{n}}$ into $\mathbf{B}_{\ell_{p}^{\nu}}$,
 where  $\nu$ is a positive integer. Then for $z\in\mathbb{B}_{\ell_{2}^{n}}$,
\be\label{eq-th-1}\big|\nabla \|u(z)\|_{p}\big|\leq\frac{4}{\pi}\frac{1-\|u(z)\|_{p}^{2}}{1-\|z\|^{2}_{2}}.\ee
The inequality {\rm(\ref{eq-th-1})} is sharp for each $z$.
\end{thm}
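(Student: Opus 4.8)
The plan is to peel off the target norm by duality, reducing the vector-valued estimate to a sharp scalar estimate for real-valued pluriharmonic functions into $(-1,1)$, and then to prove that scalar estimate by slicing $\mathbb{B}_{\ell_{2}^{n}}$ along complex lines and transporting the one-variable Kalaj--Vuorinen inequality (\ref{KV-2011}).

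First I would linearize the norm. Since $u$ maps into the \emph{real} ball $\mathbf{B}_{\ell_{p}^{\nu}}$, each coordinate $u_{j}$ is a real-valued pluriharmonic function, and with conjugate exponent $q=p/(p-1)$ one has $\|u(z)\|_{p}=\sup_{\|a\|_{q}\le1}\langle a,u(z)\rangle$, the supremum taken over $a\in\mathbb{R}^{\nu}$. Fix $z_{0}\in\mathbb{B}_{\ell_{2}^{n}}$. If $u(z_{0})=0$ the claim reduces to Theorem \ref{thm-2+} (the numerator is $1$), so assume $u(z_{0})\ne0$; then $\|\cdot\|_{p}$ is smooth at $u(z_{0})$, and the chain rule gives $\nabla\|u\|_{p}(z_{0})=\nabla v^{\ast}(z_{0})$, where $v^{\ast}:=\langle a^{\ast},u\rangle$ and $a^{\ast}=\nabla\|\cdot\|_{p}(u(z_{0}))$ is the unique unit dual vector realizing $\|u(z_{0})\|_{p}$. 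Because $\|a^{\ast}\|_{q}=1$, the function $v^{\ast}$ is real-valued pluriharmonic from $\mathbb{B}_{\ell_{2}^{n}}$ into $(-1,1)$ with $v^{\ast}(z_{0})=\|u(z_{0})\|_{p}$, so it suffices to bound $|\nabla v^{\ast}(z_{0})|$.

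Next I would establish, for any real-valued pluriharmonic $v\colon\mathbb{B}_{\ell_{2}^{n}}\to(-1,1)$, the sharp bound $|\nabla v(z_{0})|\le\frac{4}{\pi}\frac{1-v(z_{0})^{2}}{1-\|z_{0}\|_{2}^{2}}$. Writing $v=\mathrm{Re}\,h$ with $h$ holomorphic, a direct computation in coordinates $z_{k}=x_{k}+iy_{k}$ gives $|\nabla v(z_{0})|=\|\partial h(z_{0})\|_{2}=\max_{\|\theta\|_{2}=1}\big|\sum_{k}\partial_{k}h(z_{0})\theta_{k}\big|$, the maximum attained at $\theta^{\ast}=\overline{\partial h(z_{0})}/\|\partial h(z_{0})\|_{2}$. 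For fixed unit $\theta$, restricting $h$ to the complex line $\zeta\mapsto z_{0}+\zeta\theta$ produces a holomorphic function on the disc $\Delta_{\theta}=\{\zeta:|\zeta|^{2}+2\mathrm{Re}(\langle\theta,z_{0}\rangle\zeta)+\|z_{0}\|_{2}^{2}<1\}$, whose radius is $\sqrt{1-\|z_{0}\|_{2}^{2}+|\langle\theta,z_{0}\rangle|^{2}}$ and on which $\mathrm{Re}\,h\in(-1,1)$. Transporting (\ref{KV-2011}) through a Riemann map $\mathbb{D}\to\Delta_{\theta}$ fixing the origin (conformality scales the gradient by $|\sigma'(0)|$) gives $\big|\sum_{k}\partial_{k}h(z_{0})\theta_{k}\big|\le\frac{4}{\pi}(1-v(z_{0})^{2})\lambda_{\Delta_{\theta}}(0)$, where the Poincaré density computes to $\lambda_{\Delta_{\theta}}(0)=\frac{\sqrt{1-\|z_{0}\|_{2}^{2}+|\langle\theta,z_{0}\rangle|^{2}}}{1-\|z_{0}\|_{2}^{2}}$.

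The crux is then to evaluate at $\theta^{\ast}$: Cauchy--Schwarz gives $|\langle\theta^{\ast},z_{0}\rangle|\le\|z_{0}\|_{2}$, so the numerator of $\lambda_{\Delta_{\theta^{\ast}}}(0)$ is at most $1$, whence $|\nabla v(z_{0})|=\|\partial h(z_{0})\|_{2}\le\frac{4}{\pi}\frac{1-v(z_{0})^{2}}{1-\|z_{0}\|_{2}^{2}}$; applying this to $v^{\ast}$ finishes the inequality. I expect the main obstacle to be exactly this anisotropy bookkeeping, namely that each slice disc has a direction-dependent radius and one must verify that the extremal (radial) direction $\theta^{\ast}$ is the one reproducing the factor $1/(1-\|z_{0}\|_{2}^{2})$, together with checking the chain-rule/envelope step is legitimate (handled by treating $u(z_{0})=0$ separately via Theorem \ref{thm-2+}). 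Sharpness for a prescribed $z_{0}$ follows by rotating so that $z_{0}=(\|z_{0}\|_{2},0,\dots,0)$ and taking $u=(\tilde U(z_{1}),0,\dots,0)$ with $\tilde U\colon\mathbb{D}\to(-1,1)$ the extremal harmonic function for (\ref{KV-2011}) at the point $\|z_{0}\|_{2}$; since $u$ depends only on $z_{1}$ and $\|z_{0}\|_{2}=|z_{1}|$ on that slice, the one-variable equality transfers verbatim.
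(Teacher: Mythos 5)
Your proof is correct, but it reaches a general point $z_{0}$ by a genuinely different mechanism than the paper. The paper first proves the vector-valued $n=1$ case with the same duality device you use (pairing $u$ with the unit dual vector $\tau(z)\in\partial\mathbf{B}_{\ell_{q}^{\nu}}$ and applying (\ref{KV-2011})), then obtains the estimate at the origin of $\mathbb{B}_{\ell_{2}^{n}}$ by slicing along complex lines, and finally transports it to an arbitrary point via the M\"obius automorphism $\varphi_{z}$ of the ball together with the operator-norm identity $\|D(\varphi_{z}^{-1})(z)\|=1/(1-\|z\|_{2}^{2})$, exactly as in Step \ref{bst-04} of the proof of Theorem \ref{thm-2+}. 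You instead perform the duality reduction up front, obtaining a single real-valued pluriharmonic function $v^{\ast}$ into $(-1,1)$, and then slice through $z_{0}$ itself, observing that each slice $\{\zeta:\ z_{0}+\zeta\theta\in\mathbb{B}_{\ell_{2}^{n}}\}$ is a disc whose hyperbolic density at $\zeta=0$ equals $\sqrt{1-\|z_{0}\|_{2}^{2}+|\langle\theta,z_{0}\rangle|^{2}}/(1-\|z_{0}\|_{2}^{2})\le 1/(1-\|z_{0}\|_{2}^{2})$; this replaces the automorphism argument by one-variable conformal bookkeeping. (In fact that density bound holds for \emph{every} unit $\theta$ by Cauchy--Schwarz, so the ``crux'' you flag --- that the extremal direction $\theta^{\ast}$ might see a larger slice --- is automatic, and your estimate is uniform in $\theta$.) Your route is more elementary in that it never invokes the automorphism group of the ball or the explicit derivative of $\varphi_{z}^{-1}$, at the modest cost of writing $v^{\ast}=\mathrm{Re}\,h$ and verifying $|\nabla v^{\ast}|=\|\partial h\|_{2}$; the paper's route reuses machinery already set up for Theorem \ref{thm-2+}. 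Your treatment of the degenerate case $u(z_{0})=0$ (fall back on Theorem \ref{thm-2+}) and your sharpness construction (rotate $z_{0}$ onto the first axis and take a one-variable extremal vanishing there) coincide with the paper's.
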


We give the following Schwarz-Pick type estimate for pluriharmonic functions which generalizes and improves Theorem \Ref{Z-2} into the sharp form on $\mathbb{B}_{\ell_{\infty}^{n}}$ and we will use it to discuss the Lipschitz characteristic of harmonic functions on a domain in ${\mathbb C}$.

\begin{thm}\label{thm-3+}
For $n\geq1$, let
$f=(f_{1},\ldots,f_{\nu}):~\mathbb{B}_{\ell_{\infty}^{n}}\mapsto\mathbb{B}_{\ell_{2}^{\nu}}$ be a pluriharmonic
function, where   $\nu$ is a positive integer.  Then, for $z\in\mathbb{B}_{\ell_{\infty}^{n}}$, we have
\be\label{qq-ch1.1g}\sum_{j=1}^{\nu}\sum_{k=1}^{n}\left(\left|\frac{\partial f_{j}(z)}{\partial z_{k}}\right|^{2}+
\left|\frac{\partial f_{j}(z)}{\partial\overline{z}_{k}}\right|^{2}\right)(1-|z_{k}|^{2})^{2}
\leq1-\|f(z)\|_{2}^{2}.\ee
and
\be\label{qq-ch1.01g}\sum_{j=1}^{\nu}\sum_{k=1}^{n}\left(\left|\frac{\partial f_{j}(z)}{\partial z_{k}}\right|^{2}+
\left|\frac{\partial f_{j}(z)}{\partial\overline{z}_{k}}\right|^{2}\right)
\leq\frac{1-\|f(z)\|_{2}^{2}}{(1-\|z\|_{\infty}^{2})^{2}}.\ee
Moreover, the inequality {\rm (\ref{qq-ch1.1g})} is sharp for each $z\in \mathbb{B}_{\ell_{\infty}^{n}}$.
\end{thm}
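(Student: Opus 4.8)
The plan is to prove \eqref{qq-ch1.1g} by first transferring it to the origin via an automorphism of the polydisc, and then establishing the origin case by a Parseval computation over the torus; the estimate \eqref{qq-ch1.01g} will then be an immediate consequence, and the sharpness will come from an explicit affine extremal.

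\emph{Reduction to the origin.} Fix $z\in\mathbb{B}_{\ell_{\infty}^{n}}$ and set $\phi(w)=(\phi_{1}(w_{1}),\dots,\phi_{n}(w_{n}))$ with $\phi_{k}(w_{k})=(z_{k}+w_{k})/(1+\overline{z}_{k}w_{k})$. This is an automorphism of $\mathbb{B}_{\ell_{\infty}^{n}}$ with $\phi(0)=z$ and $\phi_{k}'(0)=1-|z_{k}|^{2}$, and $F:=f\circ\phi$ is again pluriharmonic from $\mathbb{B}_{\ell_{\infty}^{n}}$ into $\mathbb{B}_{\ell_{2}^{\nu}}$. Since each $\phi_{l}$ is holomorphic and depends only on $w_{l}$, the chain rule gives $\frac{\partial F_{j}}{\partial w_{k}}(0)=\frac{\partial f_{j}}{\partial z_{k}}(z)\,(1-|z_{k}|^{2})$ and, because $\phi_{k}'(0)$ is real, $\frac{\partial F_{j}}{\partial\overline{w}_{k}}(0)=\frac{\partial f_{j}}{\partial\overline{z}_{k}}(z)\,(1-|z_{k}|^{2})$. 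Hence \eqref{qq-ch1.1g} at $z$ is \emph{exactly} the statement $\sum_{j,k}\big(|\frac{\partial F_{j}}{\partial w_{k}}(0)|^{2}+|\frac{\partial F_{j}}{\partial\overline{w}_{k}}(0)|^{2}\big)\le 1-\|F(0)\|_{2}^{2}$, i.e. the case $z=0$ applied to the new map $F$ (note $\|F(0)\|_{2}=\|f(z)\|_{2}$). So it suffices to treat $z=0$.

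\emph{The origin case.} By the representation of pluriharmonic functions on the simply connected polydisc stated above, write $F_{j}=H_{j}+\overline{G_{j}}$ with $H_{j},G_{j}$ holomorphic and $G_{j}(0)=0$, and expand $H_{j}=\sum_{\alpha}a^{(j)}_{\alpha}w^{\alpha}$, $G_{j}=\sum_{\alpha}b^{(j)}_{\alpha}w^{\alpha}$ with $b^{(j)}_{0}=0$. For $0<r<1$ and $\zeta$ on the torus $\mathbb{T}^{n}=\{\zeta:\,|\zeta_{1}|=\cdots=|\zeta_{n}|=1\}$, I integrate $\|F(r\zeta)\|_{2}^{2}=\sum_{j}|H_{j}(r\zeta)+\overline{G_{j}(r\zeta)}|^{2}$ against normalized Haar measure $dm$. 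The orthogonality relation $\int_{\mathbb{T}^{n}}\zeta^{\alpha}\overline{\zeta^{\beta}}\,dm=\delta_{\alpha\beta}$ annihilates every cross term; in particular $\int_{\mathbb{T}^{n}}H_{j}G_{j}\,dm=0$ precisely because $b^{(j)}_{0}=0$. This leaves $\int_{\mathbb{T}^{n}}\|F(r\zeta)\|_{2}^{2}\,dm=\sum_{j}\sum_{\alpha}\big(|a^{(j)}_{\alpha}|^{2}+|b^{(j)}_{\alpha}|^{2}\big)r^{2|\alpha|}$, which is $\le 1$ since $\|F\|_{2}<1$ on the polydisc. Keeping only the $|\alpha|=0$ and $|\alpha|=1$ terms, discarding the nonnegative higher-order terms, and letting $r\to1$ gives $\|F(0)\|_{2}^{2}+\sum_{j,k}\big(|a^{(j)}_{e_{k}}|^{2}+|b^{(j)}_{e_{k}}|^{2}\big)\le1$. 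As $a^{(j)}_{e_{k}}=\frac{\partial F_{j}}{\partial w_{k}}(0)$ and $|b^{(j)}_{e_{k}}|=|\frac{\partial F_{j}}{\partial\overline{w}_{k}}(0)|$, this is the desired origin inequality, completing \eqref{qq-ch1.1g}.

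\emph{The second estimate and sharpness.} Inequality \eqref{qq-ch1.01g} is immediate from \eqref{qq-ch1.1g}, since $1-|z_{k}|^{2}\ge1-\|z\|_{\infty}^{2}$ for every $k$ forces $(1-|z_{k}|^{2})^{2}\ge(1-\|z\|_{\infty}^{2})^{2}$. For sharpness at a prescribed $z_{0}$, the reduction shows it is enough to produce an extremal at the origin with any prescribed value $c=F(0)\in\mathbb{B}_{\ell_{2}^{\nu}}$; the Parseval computation shows that equality forces $F$ to be affine with $\|F\|_{2}\equiv1$ on the torus. Such $F$ exist: placing $c$ in one target coordinate and the monomial $\sqrt{1-\|c\|_{2}^{2}}\,w_{1}$ in another coordinate yields $\|F(w)\|_{2}^{2}=\|c\|_{2}^{2}+(1-\|c\|_{2}^{2})|w_{1}|^{2}\le1$ with $\sum_{j,k}(\cdots)=1-\|c\|_{2}^{2}$, and pulling back by $\phi^{-1}$ gives equality at $z_{0}$.

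The essential choice, and the only real obstacle, is to argue through Parseval on the torus rather than through one-variable slices: a per-coordinate Schwarz--Pick estimate would contribute an unwanted factor $n$ on the right-hand side, whereas the torus orthogonality simultaneously distributes the bound over all $k$ and eliminates any $4/\pi$ constant, producing the clean form $1-\|f(z)\|_{2}^{2}$. The remaining points requiring care—the vanishing of the holomorphic--antiholomorphic cross terms (which uses the normalization $G_{j}(0)=0$) and the explicit extremal (which uses the room available in the target ball)—are routine once this viewpoint is adopted.
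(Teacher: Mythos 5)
Your proposal is correct and follows essentially the same route as the paper: the origin case is obtained from orthogonality/Parseval of the power series over the torus (dropping the higher-order homogeneous terms), the general case is transferred via the coordinatewise M\"obius automorphism $\phi_{k}(w_{k})=(z_{k}+w_{k})/(1+\overline{z}_{k}w_{k})$ of the polydisc together with the chain rule, and \eqref{qq-ch1.01g} and the sharpness follow exactly as in the paper (the paper's extremal is the single M\"obius component $f_{1}(z)=\frac{-a_{1}+z_{1}}{1-\overline{a}_{1}z_{1}}$, which is the special case $c=0$ of your affine example). The only cosmetic difference is the order of the two steps (you transfer to the origin first, the paper proves the origin case first), which changes nothing.
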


For a given constant $\alpha\in(0,\infty)$ and a given subset $\Omega$ of $\mathbb{C}$, a function $f:~\Omega\mapsto\mathbb{C}^{n}$ is said to belong
to the Lipschitz space $\Lambda_{\alpha}(\Omega)$ if there is a constant $C\geq1$ such that for all $z,w\in\Omega$,
$$\|f(z)-f(w)\|_{2}\leq C|z-w|^{\alpha}.$$
For the related investigation of the Lipschitz space of analytic functions, we refer to \cite{Dy1,Dy-2004,P-1999}.

A domain $\Omega\subseteq\mathbb{C}$ is said to be linearly
connected if there exists a constant $M>0$ such that any two points
$v_1,\, v_2\in\Omega$ can be connected by a smooth curve
$\gamma\subset\Omega$ with length $\ell(\gamma)\leq
M| v_1-v_2|$ (see \cite{CHHK-2014}).
By applying Theorem \ref{thm-3+}, we get the following result.

\begin{prop}\label{prop-1+}
Suppose that $\Omega$ is a linearly connected proper subdomain of $\mathbb{C}$.  Let
$f:~\Omega\mapsto\mathbb{C}^{n}$ be a harmonic
function, where $n\geq2$. If $\|f\|_{2}^{2}\in\Lambda_{2}(\Omega)$, then $f\in\Lambda_{1}(\Omega)$.
\end{prop}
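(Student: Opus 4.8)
The plan is to reduce the H\"older-regularity statement for $f$ to a pointwise gradient bound obtained from Theorem~\ref{thm-3+}, and then to integrate that bound along curves using the linear connectivity of $\Omega$. First I would observe that since $f:\Omega\to\mathbb{C}^{n}$ is harmonic and $\Omega\subseteq\mathbb{C}$, at each point $z\in\Omega$ we may pass to the disc $\mathbb{D}(z,d_{\Omega}(z))$ of radius $d_{\Omega}(z)=\dist(z,\partial\Omega)$ centered at $z$, rescale it to the unit disc $\mathbb{D}=\mathbb{B}_{\ell_{\infty}^{1}}$, and apply the $n=1$ case of Theorem~\ref{thm-3+}. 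The hypothesis $\|f\|_{2}^{2}\in\Lambda_{2}(\Omega)$ furnishes a constant $C_{0}$ with $\big|\,\|f(\zeta)\|_{2}^{2}-\|f(w)\|_{2}^{2}\,\big|\le C_{0}|\zeta-w|^{2}$; I would use this to control the quantity $1-\|f\|_{2}^{2}$ that appears on the right-hand side of \eqref{qq-ch1.1g}, after first reducing (by replacing $f$ with a suitable rescaling/normalization onto the unit ball of $\ell_{2}^{\nu}$) to the situation where the theorem applies.

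The key computation is the pointwise gradient estimate. For a fixed $z\in\Omega$, set $r=d_{\Omega}(z)$ and consider $g(\zeta)=f(z+r\zeta)$ for $\zeta\in\mathbb{D}$; this is a harmonic (hence, in one variable, pluriharmonic) map into $\mathbb{C}^{\nu}$. Applying the one-variable form of \eqref{qq-ch1.1g} at $\zeta=0$ gives
\begin{equation}\label{eq-prop-grad}
\sum_{j=1}^{\nu}\left(\left|\frac{\partial g_{j}(0)}{\partial \zeta}\right|^{2}+\left|\frac{\partial g_{j}(0)}{\partial\overline{\zeta}}\right|^{2}\right)\le 1-\|g(0)\|_{2}^{2}=1-\|f(z)\|_{2}^{2},
\end{equation}
and undoing the scaling converts the left side into $r^{2}$ times the corresponding sum of squared partials of $f$ at $z$, so that the Euclidean operator norm of the (real) differential of $f$ at $z$ is bounded by a constant multiple of $\sqrt{1-\|f(z)\|_{2}^{2}}/d_{\Omega}(z)$. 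The point is then to bound $1-\|f(z)\|_{2}^{2}$ in terms of $d_{\Omega}(z)$: using $\Lambda_{2}$-regularity of $\|f\|_{2}^{2}$ and comparing $\|f(z)\|_{2}^{2}$ with its boundary values (which lie on the sphere, i.e. equal $1$ in the normalized picture), one gets $1-\|f(z)\|_{2}^{2}\le C_{0}\,d_{\Omega}(z)^{2}$. Substituting this into the gradient bound cancels the factor $d_{\Omega}(z)$ in the denominator and yields a \emph{uniform} bound $\|Df(z)\|\le C_{1}$ for all $z\in\Omega$, with $C_{1}$ depending only on $C_{0}$ and $\nu$.

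Finally I would promote the uniform gradient bound to the global Lipschitz estimate $\|f(z)-f(w)\|_{2}\le C|z-w|$. Here the linear connectivity of $\Omega$ enters: given $v_{1},v_{2}\in\Omega$ there is a smooth curve $\gamma\subset\Omega$ joining them with $\ell(\gamma)\le M|v_{1}-v_{2}|$, and integrating $\|Df\|\le C_{1}$ along $\gamma$ gives $\|f(v_{1})-f(v_{2})\|_{2}\le C_{1}\,\ell(\gamma)\le C_{1}M|v_{1}-v_{2}|$, which is exactly $f\in\Lambda_{1}(\Omega)$ with $C=C_{1}M$.

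I expect the main obstacle to be the second step, namely justifying the boundary-comparison estimate $1-\|f(z)\|_{2}^{2}\le C_{0}\,d_{\Omega}(z)^{2}$ rigorously. The subtlety is that $\Omega$ is an arbitrary linearly connected proper subdomain, so one must argue carefully that the $\Lambda_{2}$-bound on $\|f\|_{2}^{2}$ together with the maximum/geometric structure forces $\|f(z)\|_{2}^{2}$ to approach its supremal value quadratically in the distance to the boundary; controlling the normalization (so that the relevant supremum is exactly $1$) and handling the possible failure of $f$ to extend continuously to $\partial\Omega$ is where the care is required. Once this quadratic decay of $1-\|f\|_{2}^{2}$ is secured, the rest is a routine rescaling-and-integration argument.
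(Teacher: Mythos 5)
Your overall strategy --- a pointwise gradient bound from Theorem \ref{thm-3+} on the disc of radius $d_{\Omega}(z)$ about $z$, quadratic control of the right-hand side via the $\Lambda_{2}$ hypothesis, and integration along a short curve supplied by linear connectivity --- is exactly the paper's. But the step you yourself flag as the main obstacle is a genuine gap, and the route you propose for closing it (comparing $\|f(z)\|_{2}^{2}$ with boundary values ``which lie on the sphere, i.e.\ equal $1$ in the normalized picture'') would not work: $f$ maps into $\mathbb{C}^{n}$, not onto a sphere, it need not extend continuously to $\partial\Omega$, and even after a global normalization making $\sup_{\Omega}\|f\|_{2}=1$ there is no reason for $\|f\|_{2}$ to approach that supremum near the part of $\partial\Omega$ closest to $z$; the supremum may be approached far from $z$, so the estimate $1-\|f(z)\|_{2}^{2}\leq C_{0}\,d_{\Omega}(z)^{2}$ is simply not the right statement to aim for.

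The missing idea is to normalize \emph{locally} rather than globally. Set $d(z)=d_{\Omega}(z)$ and $M_{z}=\sup\{\|f(\iota)\|_{2}:\ |\iota-z|<d(z)\}$, and apply Theorem \ref{thm-3+} (in the one-variable case, as you do) to $F(\epsilon)=f(z+d(z)\epsilon)/M_{z}$, $\epsilon\in\mathbb{D}$. The rescaled inequality reads
$$d(z)\left(\|\partial f(z)\|_{2}^{2}+\|\overline{\partial}f(z)\|_{2}^{2}\right)^{\frac{1}{2}}\leq\left(M_{z}^{2}-\|f(z)\|_{2}^{2}\right)^{\frac{1}{2}},$$
and the quantity on the right is, by the very definition of $M_{z}$, the supremum of $\|f(\eta)\|_{2}^{2}-\|f(z)\|_{2}^{2}$ over $\eta$ with $|\eta-z|<d(z)$; each such difference is at most $C|\eta-z|^{2}\leq C\,d(z)^{2}$ directly from the $\Lambda_{2}$ hypothesis, with no reference to boundary values, to where the global supremum is attained, or to any extension of $f$. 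This yields the uniform bound $\left(\|\partial f(z)\|_{2}^{2}+\|\overline{\partial}f(z)\|_{2}^{2}\right)^{1/2}\leq C^{1/2}$ on all of $\Omega$, after which your final step --- integrating along the curve of length at most $M|v_{1}-v_{2}|$ provided by linear connectivity --- goes through verbatim and coincides with the paper's conclusion.
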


\subsection{The coefficient type Schwarz-Pick  lemmas and their applications}

In the following, we extend Theorem \Ref{Coe-2} to the $n$-dimensional case.

\begin{thm}\label{lem-3.1}
For $p\in[1,\infty]$, let
$f(z)=\sum_{\alpha}a_{\alpha}z^{\alpha}+\sum_{\alpha}\overline{b}_{\alpha}\overline{z}^{\alpha}$ be a pluriharmonic function of $\mathbb{B}_{\ell_{p}^{n}}$
into $\mathbb{D}$.
 Then for
all $|\alpha|\geq1$,

\begin{equation}\label{c-1.7s} |a_{\alpha}|+|b_{\alpha}|\leq
\frac{4}{\pi}\left(\frac{|\alpha|^{|\alpha|}}{\alpha^{\alpha}}\right)^{\frac{1}{p}}.
\end{equation}
Moreover, the constant $4/\pi$ in {\rm (\ref{c-1.7s})} is sharp.

\end{thm}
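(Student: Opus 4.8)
The plan is to reduce the $n$-dimensional estimate to the one-variable coefficient lemma, Theorem~\Ref{Coe-2}, by restricting $f$ to complex lines through the origin. Since $\mathbb{B}_{\ell_p^n}$ is convex (hence simply connected) and contains $0$, we may write $f=h+\overline g$ with $h(z)=\sum_\alpha a_\alpha z^\alpha$ and $g(z)=\sum_\alpha b_\alpha z^\alpha$ holomorphic and $g(0)=0$. Fix $\theta\in\mathbb{C}^n$ with $\|\theta\|_p\le 1$ and set $F_\theta(\zeta):=f(\zeta\theta)$ for $\zeta\in\mathbb{D}$. Since $\|\zeta\theta\|_p=|\zeta|\,\|\theta\|_p<1$ for $|\zeta|<1$, the map $F_\theta$ sends $\mathbb{D}$ into $\mathbb{D}$, and it is harmonic because $f$ is pluriharmonic. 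Collecting terms by homogeneity gives $F_\theta=H_\theta+\overline{G_\theta}$, where $H_\theta(\zeta)=h(\zeta\theta)=\sum_{m\ge 0}A_m(\theta)\zeta^m$ and $G_\theta(\zeta)=g(\zeta\theta)=\sum_{m\ge 1}B_m(\theta)\zeta^m$ with $A_m(\theta)=\sum_{|\beta|=m}a_\beta\theta^\beta$ and $B_m(\theta)=\sum_{|\beta|=m}b_\beta\theta^\beta$. Theorem~\Ref{Coe-2} then yields, for every $m\ge 1$,
\begin{equation}\label{slice-bd}
\Big|\sum_{|\beta|=m}a_\beta\theta^\beta\Big|+\Big|\sum_{|\beta|=m}b_\beta\theta^\beta\Big|\le\frac{4}{\pi},\qquad \|\theta\|_p\le 1.
\end{equation}

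Second, to extract the individual coefficients indexed by a fixed $\alpha$ with $|\alpha|=m\ge 1$, I would use a phase-alignment and torus-averaging argument. Choose unimodular $\lambda,\mu$ with $\lambda a_\alpha=|a_\alpha|$ and $\mu b_\alpha=|b_\alpha|$, and restrict to $\theta=(r_1 e^{it_1},\dots,r_n e^{it_n})$ with $r_k\ge 0$ and $\|r\|_p\le 1$, so that $\|\theta\|_p=\|r\|_p\le 1$. Integrating $\big(\lambda A_m+\mu B_m\big)(\theta)$ against $e^{-i\sum_k\alpha_k t_k}$ over $t\in[0,2\pi]^n$ and using orthogonality of characters annihilates every monomial except $\beta=\alpha$, giving
\begin{equation}\label{isolate}
(|a_\alpha|+|b_\alpha|)\,r^\alpha=\frac{1}{(2\pi)^n}\int_{[0,2\pi]^n}\big(\lambda A_m+\mu B_m\big)(\theta)\,e^{-i\sum_k\alpha_k t_k}\,dt.
\end{equation}
Bounding the right-hand side in modulus by $\frac{1}{(2\pi)^n}\int_{[0,2\pi]^n}(|A_m|+|B_m|)\,dt$ and invoking \eqref{slice-bd} produces $(|a_\alpha|+|b_\alpha|)\,r^\alpha\le\frac4\pi$ for every admissible $r$.

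Third, I would optimize over $r$. Maximizing $r^\alpha=\prod_k r_k^{\alpha_k}$ subject to $\sum_k r_k^p=1$, $r_k\ge 0$ (a Lagrange multiplier or weighted AM--GM computation via the substitution $u_k=r_k^p$), gives the maximizer $r_k^p=\alpha_k/|\alpha|$ and maximal value $\big(\alpha^\alpha/|\alpha|^{|\alpha|}\big)^{1/p}$, with the convention $0^0=1$ for the indices where $\alpha_k=0$; for $p=\infty$ the maximum is $1$, matching the fact that the displayed factor equals $1$ there. Dividing by this maximum yields exactly \eqref{c-1.7s}. For sharpness of the constant $4/\pi$ I would take $\alpha=(m,0,\dots,0)$, so that $\big(|\alpha|^{|\alpha|}/\alpha^\alpha\big)^{1/p}=1$, and put $f(z)=\phi(z_1)$ for a one-variable harmonic extremal $\phi:\mathbb{D}\to\mathbb{D}$ attaining (or approaching) $|c_m|+|d_m|=4/\pi$ in Theorem~\Ref{Coe-2}; since $|z_1|\le\|z\|_p$, this $f$ maps $\mathbb{B}_{\ell_p^n}$ into $\mathbb{D}$ and realizes the bound, showing $4/\pi$ cannot be decreased. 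I expect the middle step to be the main obstacle: the combined slice bound \eqref{slice-bd} has absolute values around whole homogeneous sums, so the two coefficients cannot be read off termwise, and it is precisely the phase alignment together with integration against a single character that makes the two moduli cooperate into a clean estimate for the pair $(a_\alpha,b_\alpha)$.
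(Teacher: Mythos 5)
Your proof is correct, and it reaches the bound by a slightly different route than the paper. The paper works directly with $f$ on the $n$-torus: it extracts $|a_{\alpha}|\,|\xi^{\alpha}|$ and $|b_{\alpha}|\,|\xi^{\alpha}|$ by integrating $f(\xi\otimes\theta)$ against pre-aligned characters, adds the two representations, bounds the resulting kernel by $2\left|\cos\left(\sum_{k}\alpha_{k}(\mu_{k}+\theta_{k})+\tfrac{\arg a_{\alpha}+\arg b_{\alpha}}{2}\right)\right|$, and re-derives the constant $4/\pi$ from the explicit integral $\int_{0}^{2\pi}|\cos(m\theta+\gamma)|\,d\theta=4$ (Lemma \ref{lem-CR}). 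You instead import the constant from the one-variable coefficient lemma (Theorem \Ref{Coe-2}) by slicing along complex lines, obtaining the homogeneous bound $|A_{m}(\theta)|+|B_{m}(\theta)|\leq 4/\pi$, and then perform the torus averaging at the level of the homogeneous polynomials $A_{m},B_{m}$ with the phase alignment $\lambda a_{\alpha}=|a_{\alpha}|$, $\mu b_{\alpha}=|b_{\alpha}|$ to isolate the pair of coefficients; this step is sound, since $|\lambda A_{m}+\mu B_{m}|\leq|A_{m}|+|B_{m}|$ and orthogonality kills every monomial except $\beta=\alpha$. Both arguments then finish identically with the extremization $\sup_{\xi\in\mathbb{B}_{\ell_{p}^{n}}}|\xi^{\alpha}|=\left(\alpha^{\alpha}/|\alpha|^{|\alpha|}\right)^{1/p}$ (the paper's (\ref{eq-c-1.27t})) and essentially the same sharpness example supported on a single coordinate. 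What your version buys is modularity (the constant $4/\pi$ is quoted once from the known planar result rather than recomputed); what the paper's version buys is self-containedness and a single-pass computation in which the phase alignment and the cosine integral appear together. The only cosmetic caveat is that your sharpness step should name an explicit extremal, e.g.\ $\phi(\zeta)=\frac{2}{\pi}\arg\left(\frac{1+\zeta^{m}}{1-\zeta^{m}}\right)$ as the paper does, so that the value $4/\pi$ is actually attained rather than merely approached.
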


For $p\in[1,\infty]$, let $\mathscr{PH}(\mathbb{B}_{\ell_{p}^{n}})$
denote the set of all pluriharmonic functions from
$\mathbb{B}_{\ell_{p}^{n}}$ into $\mathbb{C}$.  By analogy with Theorem \Ref{Coe-2}, we establish a coefficient type Schwarz-Pick lemma for
a general case as follows.

\begin{thm}\label{lem-0}
 If
$f(z)=\sum_{\alpha}a_{\alpha}z^{\alpha}+\sum_{\alpha}\overline{b}_{\alpha}\overline{z}^{\alpha}\in\mathscr{PH}(\mathbb{B}_{\ell_{p}^{n}})$
with $\sup_{z\in\mathbb{B}_{\ell_{p}^{n}}}{\rm Re}(f(z))\leq 1$,
then for all $k\in\{1,2,\ldots\}$,

\begin{equation}\label{c-1.01}
\sup_{z\in\mathbb{B}_{\ell_{p}^{n}}}\left|\sum_{|\alpha|=k}(a_{\alpha}+b_{\alpha})z^{\alpha}\right|\leq2\big(1-{\rm
Re}(f(0))\big),
\end{equation} and for all $|\alpha|\geq1$,

\begin{equation}\label{c-1.7} |a_{\alpha}+b_{\alpha}|\leq
2\left(\frac{|\alpha|^{|\alpha|}}{\alpha^{\alpha}}\right)^{\frac{1}{p}}\big(1-{\rm
Re}(f(0))\big).
\end{equation}
Moreover, the constant $2$ in {\rm (\ref{c-1.01})} and {\rm
(\ref{c-1.7})} are sharp.
\end{thm}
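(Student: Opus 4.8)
The plan is to reduce the entire statement, which concerns a complex-valued pluriharmonic $f$, to a coefficient estimate for a single holomorphic function with bounded real part. Since $\mathbb{B}_{\ell_{p}^{n}}$ is a convex, hence simply connected, domain containing the origin, the representation recalled above gives $f=h+\overline{g}$ with $h,g$ holomorphic. Writing $h(z)=\sum_{\alpha}a_{\alpha}z^{\alpha}$ and $g(z)=\sum_{\alpha}b_{\alpha}z^{\alpha}$, a direct computation gives $2\,\mathrm{Re}\,f=(h+g)+\overline{(h+g)}$, so that $\mathrm{Re}\,f=\mathrm{Re}\,\phi$ where $\phi:=h+g=\sum_{\alpha}c_{\alpha}z^{\alpha}$ is holomorphic with $c_{\alpha}=a_{\alpha}+b_{\alpha}$. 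Thus the hypothesis becomes $\sup_{z}\mathrm{Re}\,\phi(z)\le 1$, we have $\mathrm{Re}\,\phi(0)=\mathrm{Re}\,f(0)$, and both $(\ref{c-1.01})$ and $(\ref{c-1.7})$ are now assertions about the homogeneous parts $\phi_{k}(z):=\sum_{|\alpha|=k}c_{\alpha}z^{\alpha}$ and the Taylor coefficients $c_{\alpha}$ of $\phi$.

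To establish $(\ref{c-1.01})$ I would slice. Fix $z\in\mathbb{B}_{\ell_{p}^{n}}$ and set $\Phi(\zeta):=\phi(\zeta z)$; since $\|\zeta z\|_{p}=|\zeta|\,\|z\|_{p}$, the function $\Phi$ is holomorphic on the disc $|\zeta|<1/\|z\|_{p}$, which contains $\overline{\mathbb{D}}$, and satisfies $\mathrm{Re}\,\Phi\le 1$ there. By homogeneity its $k$-th Taylor coefficient is exactly $\phi_{k}(z)$, while $\Phi(0)=\phi(0)$. Applying the classical Carath\'eodory coefficient inequality (via the Herglotz representation) to the positive-real-part function $1-\Phi$ yields $|\phi_{k}(z)|\le 2\,(1-\mathrm{Re}\,\phi(0))$; taking the supremum over $z$ gives $(\ref{c-1.01})$, with the constant $2$ coming directly from Carath\'eodory.

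For $(\ref{c-1.7})$ I would extract individual coefficients from the uniform bound $M:=\sup_{z}|\phi_{k}(z)|\le 2(1-\mathrm{Re}\,f(0))$ obtained in the previous step. For $r=(r_{1},\dots,r_{n})$ with $r_{j}>0$ and $\|r\|_{p}<1$, integrating $\phi_{k}$ over the torus gives $c_{\alpha}r^{\alpha}=(2\pi)^{-n}\int_{[0,2\pi]^{n}}\phi_{k}(r_{1}e^{i\theta_{1}},\dots,r_{n}e^{i\theta_{n}})\,e^{-i\langle\alpha,\theta\rangle}\,d\theta$ for $|\alpha|=k$, since that torus lies in $\mathbb{B}_{\ell_{p}^{n}}$; hence $|c_{\alpha}|\le M\,r^{-\alpha}$. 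The main computational step is then to minimise $r^{-\alpha}=\prod_{j}r_{j}^{-\alpha_{j}}$ subject to $\sum_{j}r_{j}^{p}=1$: a Lagrange-multiplier computation gives the extremal radii $r_{j}=(\alpha_{j}/|\alpha|)^{1/p}$, for which $r^{\alpha}=(\alpha^{\alpha}/|\alpha|^{|\alpha|})^{1/p}$, and letting $\|r\|_{p}\to 1^{-}$ yields $|c_{\alpha}|\le M\,(|\alpha|^{|\alpha|}/\alpha^{\alpha})^{1/p}$, i.e. $(\ref{c-1.7})$; the case $p=\infty$ is the degenerate one, handled by $r_{j}=1$. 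I expect this constrained optimisation, together with checking that the torus stays inside the $\ell_{p}$-ball, to be the main technical point, since everything upstream is the soft reduction and a one-variable classical fact.

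Finally, for sharpness of the constant $2$ I would produce a single extremal example. Take $\Psi(w)=\frac{1+w^{k}}{1-w^{k}}$, which has positive real part on $\mathbb{D}$, set $\phi(w)=1-\Psi(w)=-2w^{k}-2w^{2k}-\cdots$, and define $f(z):=\phi(z_{1})$ on $\mathbb{B}_{\ell_{p}^{n}}$; this $f$ is holomorphic, hence pluriharmonic, with $\mathrm{Re}\,f\le 1$ and $\mathrm{Re}\,f(0)=0$. Then the degree-$k$ homogeneous part equals $-2z_{1}^{k}$, whose supremum over $\mathbb{B}_{\ell_{p}^{n}}$ is $2=2(1-\mathrm{Re}\,f(0))$, showing $(\ref{c-1.01})$ is sharp; and for $\alpha=k e_{1}$ the factor $(|\alpha|^{|\alpha|}/\alpha^{\alpha})^{1/p}$ equals $1$, so $|a_{\alpha}+b_{\alpha}|=2$ meets the bound in $(\ref{c-1.7})$, proving the constant $2$ there is sharp as well.
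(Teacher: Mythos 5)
Your proof is correct, and its engine is the same as the paper's: the decomposition $f=h+\overline{g}$ on the simply connected ball, Fourier orthogonality combined with the positivity of $1-\mathrm{Re}\,f$, and the evaluation $\sup_{\xi\in\mathbb{B}_{\ell_{p}^{n}}}|\xi^{\alpha}|=(\alpha^{\alpha}/|\alpha|^{|\alpha|})^{1/p}$. You do, however, organize the argument differently in two respects. First, by passing to the single holomorphic function $\phi=h+g$ with $\mathrm{Re}\,\phi=\mathrm{Re}\,f$ you can quote the classical Carath\'eodory coefficient lemma on radial slices; the paper instead re-derives that lemma inline by integrating $h$ and $g$ separately over the circles $ze^{i\tau}$ and adding -- a cosmetic difference. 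Second, and more substantively, you deduce (\ref{c-1.7}) from (\ref{c-1.01}): positivity of $1-\mathrm{Re}\,f$ is used only on one-dimensional slices, and $c_{\alpha}=a_{\alpha}+b_{\alpha}$ is then extracted from the homogeneous polynomial $\phi_{k}$ by a plain $n$-torus integral that needs no positivity, followed by your Lagrange-multiplier computation of $\sup|\xi^{\alpha}|$ (which the paper simply cites from Dineen). The paper proves (\ref{c-1.7}) independently, integrating $1-\mathrm{Re}\,f$ against $e^{-i\sum_{k}\alpha_{k}\theta_{k}}/\xi^{\alpha}$ over the full torus. Both routes yield the same constant; yours makes the implication (\ref{c-1.01}) $\Rightarrow$ (\ref{c-1.7}) explicit and is self-contained on the monomial supremum, at the cost of a small optimization step (where you should, as you note, treat indices with $\alpha_{j}=0$ and the case $p=\infty$ by a limiting argument). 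Your extremal function $1-\frac{1+w^{k}}{1-w^{k}}$ in the variable $z_{1}$ is a harmless variant of the paper's $-2z_{k}/(1-z_{k})$ and correctly establishes sharpness of both constants.
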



In the following, we will use  Theorem \ref{lem-3.1} to extend (\ref{Col-89})
and Theorem \Ref{Higher-1} to pluriharmonic functions on
the unit ball $\mathbb{B}_{\ell_{2}^{n}}$, and give an estimate for
the partial derivatives of arbitrary order. One should note that the higher dimensional case is very
different from the one dimensional situation and, because we are
dealing with partial derivatives of arbitrary order, the method of
proof for (\ref{Col-89}) from
 \cite{Co-1989} can not be used. The result is as follows.


\begin{thm}\label{thm-4}
Suppose that  $f$ is a pluriharmonic function of  $\mathbb{B}_{\ell_{2}^{n}}$ into $\mathbb{D}$, where $n\geq 1$.
 \begin{enumerate}
\item[(i)] If $n=1$, then for
$z\in\mathbb{B}_{\ell_{2}^{1}}=\mathbb{D}$,
$$\left|\frac{\partial^{m} f(z)}{\partial z^{m}}\right|+\left|\frac{\partial^{m} f(z)}{\partial
\overline{z}^{m}}\right|
\leq\frac{4}{\pi}m!\frac{(1+|z|)^{m-1}}{(1-|z|^{2})^{m}},$$ where
$m\geq1$. The constant $4/\pi$ in this inequality can not be improved.

\item[(ii)] If $n\geq 2$, then for $z\in\mathbb{B}_{\ell_{2}^{n}}$,
$$\left|\frac{\partial^{|m|} f(z)}{\partial z_{1}^{m_{1}}\cdots
\partial z_{n}^{m_{n}}}\right|+\left|\frac{\partial^{|m|} f(z)}{\partial
\overline{z}_{1}^{m_{1}}\cdots
\partial \overline{z}_{n}^{m_{n}}}\right|
\leq\frac{4}{\pi}n^{\frac{|m|}{2}}{n+|m|-1\choose
n-1}|m|!\frac{\prod_{j=1}^{n}(1+|z_j|)^{m_j}}{(1-\|z\|^{2}_{2})^{|m|}},$$
where $m=(m_{1},\ldots,m_{n})\neq0$ is a multi-index.
\end{enumerate}
\end{thm}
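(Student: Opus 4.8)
The plan is to derive both parts from the coefficient estimate of Theorem~\ref{lem-3.1} with $p=2$, which controls the full Taylor expansion of a pluriharmonic self-map of $\mathbb{B}_{\ell_2^n}$ into $\mathbb{D}$, combined with the device of recentring at the base point by an automorphism. Throughout I write $f=h+\overline g$ with $h,g$ holomorphic, so that $\partial^{|m|}f/\partial z^m=\partial^{|m|}h/\partial z^m$ and $\partial^{|m|}f/\partial\overline z^m=\overline{\partial^{|m|}g/\partial z^m}$, and everything reduces to estimating the holomorphic derivatives of $h$ and $g$ simultaneously. The first-order argument for (\ref{Col-89}) reads $f'(z)$ off the \emph{linear} Taylor coefficient of $f\circ\phi$; for higher orders this fails, since that coefficient mixes in lower-order derivatives. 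I therefore run the device in the opposite direction, expressing $\partial^m h(z)$ through \emph{all} coefficients of $f\circ\phi$, which is precisely what Theorem~\ref{lem-3.1} bounds.

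\textbf{Part (i).} Fix $z\in\mathbb{D}$ and put $\phi(w)=(z+w)/(1+\overline z w)$, so that $F:=f\circ\phi=H+\overline G$ is a harmonic self-map of $\mathbb{D}$, where $H=h\circ\phi=\sum_{j}A_jw^j$ and $G=g\circ\phi=\sum_j B_j w^j$. Theorem~\ref{lem-3.1} (case $n=1$) gives $|A_j|+|B_j|\le 4/\pi$ for all $j\ge1$. Since $\phi^{-1}(w)=(w-z)/(1-\overline z w)$, one has $h=H\circ\phi^{-1}$, and a short computation expanding about the point $z$ yields
\[
\phi^{-1}(z+u)=\sum_{k\ge1}\gamma_k u^k,\qquad \gamma_k=\frac{\overline z^{\,k-1}}{(1-|z|^2)^k}.
\]
Composing with $H$ and reading off the coefficient of $u^m$ gives
\[
\frac1{m!}\frac{\partial^m h}{\partial z^m}(z)=\sum_{j=1}^m A_j\,P_{m,j},\qquad P_{m,j}=\sum_{k_1+\cdots+k_j=m}\gamma_{k_1}\cdots\gamma_{k_j},
\]
with the identical identity for $g$ and $B_j$. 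The decisive observation is that \emph{every} monomial in $P_{m,j}$ equals $\overline z^{\,m-j}/(1-|z|^2)^m$ irrespective of the composition, so $P_{m,j}=\binom{m-1}{j-1}\overline z^{\,m-j}/(1-|z|^2)^m$. Combining the two identities with the triangle inequality, $|A_j|+|B_j|\le4/\pi$, and the binomial sum $\sum_{j=1}^m\binom{m-1}{j-1}|z|^{m-j}=(1+|z|)^{m-1}$ gives exactly the claimed bound. Evaluating at $z=0$ reduces it to $|A_m|+|B_m|\le4/\pi$, sharp by Theorem~\ref{lem-3.1}, so $4/\pi$ cannot be improved.

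\textbf{Part (ii).} For $n\ge2$ I reduce the mixed partial to the one-dimensional case by slicing. For a unit vector $t\in\partial\mathbb{B}_{\ell_2^n}$ the map $\zeta\mapsto f(z+\zeta t)$ is harmonic into $\mathbb{D}$ on the disc $\{\|z+\zeta t\|_2<1\}$, which has centre $-\overline{\langle t,z\rangle}$ and radius $R=\sqrt{1-\|z\|_2^2+|\langle t,z\rangle|^2}$; by Cauchy--Schwarz $R\le1$, and at the point $s_0$ corresponding to $z$ one has $R^2(1-|s_0|^2)=1-\|z\|_2^2$. Applying part (i) on this disc bounds the $|m|$-th directional derivative $(\sum_j t_j\partial_{z_j})^{|m|}h(z)$ (and likewise for $g$) by $\tfrac4\pi|m|!\,R(R+|\langle t,z\rangle|)^{|m|-1}/(1-\|z\|_2^2)^{|m|}$. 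I then recover $\partial^m$ by averaging over the torus $t=(e^{i\phi_1},\dots,e^{i\phi_n})/\sqrt n$: integrating against $e^{-i\langle m,\phi\rangle}$ isolates $\partial^m$ by orthogonality and produces the normalising factor $n^{|m|/2}$, while summing over the $\binom{n+|m|-1}{n-1}$ multi-indices of degree $|m|$ supplies that combinatorial constant. Using $R\le1$ to keep the denominator at order $(1-\|z\|_2^2)^{|m|}$ and bounding the numerators by $\prod_j(1+|z_j|)^{m_j}$ assembles the stated estimate.

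\textbf{Main obstacle.} In (i) the crux is the equal-modulus phenomenon, which collapses the many-term Fa\`a di Bruno expansion into one binomial sum and recovers the sharp order $(1-|z|^2)^{-m}$; a naive term-by-term estimate of the Taylor series of $h$ about $z$ gives only the strictly weaker $(1-|z|)^{-(m+1)}$. In (ii) the analogous difficulty is purely the combinatorial and geometric bookkeeping: one must show that the torus average of $R(R+|\langle t,z\rangle|)^{|m|-1}$ repackages into $\binom{n+|m|-1}{n-1}\prod_j(1+|z_j|)^{m_j}$ while preserving the ball-type denominator $(1-\|z\|_2^2)^{|m|}$. A direct expansion of $h$ about the origin is not an option here, since the uniform coefficient bound $\tfrac4\pi n^{|\alpha|/2}$ is too lossy for unbalanced multi-indices and the resulting series diverges, which is exactly why the recentring by slicing is indispensable.
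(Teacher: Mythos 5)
Your part (i) is correct and is essentially the paper's own argument in different clothing: both recentre by the M\"obius map $\phi$, invoke the coefficient bound $|A_j|+|B_j|\le 4/\pi$ from Theorem~\ref{lem-3.1}, and observe that the resulting expansion collapses into the binomial sum $\sum_{j=1}^m\binom{m-1}{j-1}|z|^{m-j}=(1+|z|)^{m-1}$ (the paper organizes this as a Leibniz differentiation of $(z-\xi)^{\alpha}(1-\overline{\xi}z)^{-\alpha}$ rather than a composition of power series, but the computation is the same). The sharpness argument at $z=0$ also matches the paper's extremal function.

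Part (ii) has a genuine gap. Your slicing plus torus-averaging yields, after the orthogonality extraction (which isolates a \emph{single} Fourier coefficient and therefore does not, by any mechanism you describe, produce a sum over the $\binom{n+|m|-1}{n-1}$ multi-indices), the estimate
\[
\left|\frac{\partial^{|m|}f(z)}{\partial z^{m}}\right|+\left|\frac{\partial^{|m|}f(z)}{\partial\overline{z}^{m}}\right|
\le\frac{4}{\pi}\,m!\,n^{|m|/2}\,
\frac{\sup_{t}R\,(R+|\langle t,z\rangle|)^{|m|-1}}{(1-\|z\|_2^2)^{|m|}},
\]
and the asserted ``repackaging'' of $\sup_{t}R(R+|\langle t,z\rangle|)^{|m|-1}$ into $\binom{n+|m|-1}{n-1}\frac{|m|!}{m!}\prod_j(1+|z_j|)^{m_j}$ is false. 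Take $n=2$, $m=(|m|,0)$, $z=(0,r)$ with $r\to1$: then $\prod_j(1+|z_j|)^{m_j}=1$, $m!=|m|!$, $|\langle t,z\rangle|\to 1/\sqrt{2}$, $R\to1/\sqrt{2}$, so $R(R+|\langle t,z\rangle|)^{|m|-1}\to 2^{(|m|-2)/2}$, which is exponential in $|m|$, whereas the target constant $\binom{|m|+1}{1}=|m|+1$ is only linear; already at $|m|=10$ you get $16$ versus $11$, and the discrepancy grows without bound. So your bound is a valid but genuinely different estimate that does not imply the stated one. The paper avoids this by recentring with the full automorphism $\varphi$ of $\mathbb{B}_{\ell_2^n}$ sending $\xi$ to $0$, writing $f=\sum_{\alpha}c_{\alpha}u_{\alpha}v_{\alpha}+\overline{\sum_{\alpha}d_{\alpha}u_{\alpha}v_{\alpha}}$ with $u_{\alpha}$ vanishing to order $|\alpha|$ at $\xi$ and $v_{\alpha}=(1-\langle z,\xi\rangle)^{-|\alpha|}$; only $|\alpha|\le|m|$ contribute to $\partial^m$ at $\xi$, the coefficients are controlled by Theorem~\ref{lem-3.1} together with the bound $(|\alpha|^{|\alpha|}/\alpha^{\alpha})^{1/2}\le n^{|m|/2}$ (Lemma~\ref{lem-3.2}), a Leibniz computation bounds $|\partial^m(u_{\alpha}v_{\alpha})(\xi)|$ by $|m|!\sum_{\beta}\prod_j|\xi_j|^{m_j-\beta_j}\binom{m_j}{\beta_j}(1-\|\xi\|_2^2)^{-|m|}$, and the factor $\binom{n+|m|-1}{n-1}$ enters as the count of multi-indices $\alpha$ of each degree $k\le|m|$. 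That counting step is the ingredient your scheme has no substitute for.
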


\begin{rem}\label{rem-2y}
 We remark that
if $n=m=1$, then Theorem \ref{thm-4} coincides with (\ref{Col-89}) (or \cite[Theorem 3]{Co-1989} ).
Moreover, the growth rate of the formula ${n+|m|-1\choose
n-1}$ in Theorem \ref{thm-4}  can be estimated. It follows from
Stirling's formula (cf. \cite{N-D}) that there is an absolute
constant $c\in[1,e]$ such that for any $n$ and $m$,

\begin{equation}\label{Num-1} {n+|m|-1\choose n-1}\leq
c^{|m|}\left(1+\frac{n}{|m|}\right)^{|m|}.
\end{equation}
In particular, for the extension of (\ref{Col-89}) on the polydisc
$\mathbb{B}_{\ell_{\infty}^{n}}$, $n\geq 2$, see \cite{CR}. It is well known
that there are no biholomorphic mappings between
$\mathbb{B}_{\ell_{\infty}^{n}}$ and $\mathbb{B}_{\ell_{2}^{n}}$
in the case $n\geq 2$.
Hence,  the research methods to deal with these two situations are
completely different (see \cite{R1,R2}).
\end{rem}

By using Theorem \ref{lem-0}, we will establish a sharp Schwarz-Pick type inequality of arbitrary order
for $f\in\mathscr{PH}(\mathbb{B}_{\ell_{\infty}^{n}})$ with $\sup_{z\in\mathbb{B}_{\ell_{\infty}^{n}}}{\rm Re}(f(z))\leq 1$.

\begin{thm}\label{thm-2}
Let $f\in\mathscr{PH}(\mathbb{B}_{\ell_{\infty}^{n}})$  with
$\sup_{z\in\mathbb{B}_{\ell_{\infty}^{n}}}{\rm Re}(f(z))\leq 1$.
Then

\begin{equation}\label{c-1.5}\left|\frac{\partial^{|m|} f(z)}{\partial
z_{1}^{m_{1}}\cdots
\partial z_{n}^{m_{n}}}+ \overline{\frac{\partial^{|m|} f(z)}{\partial
\overline{z}_{1}^{m_{1}}\cdots
\partial \overline{z}_{n}^{m_{n}}}}\right|\leq2(m!)(1-{\rm Re}(f(z)))\frac{(1+\|z\|_{\infty})^{|m|-N}}{(1-\|z\|_{\infty}^{2})^{|m|}},
 \end{equation}
where $m=(m_{1},\ldots,m_{n})\neq0$ is a multi-index and $N$ is the
number of the indices $j$ such that $m_j\neq0$. Furthermore, the
constant $2$ in {\rm (\ref{c-1.5})} cannot be improved.
\end{thm}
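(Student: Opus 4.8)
The plan is to reduce to the coefficient inequality of Theorem~\ref{lem-0} by a change of variables, and then to recover the mixed derivative at a general point through the one-variable Ruscheweyh-type computation underlying Theorem~\Ref{Higher-1}. First I would observe that at $z=0$ the claimed inequality is literally (\ref{c-1.7}) with $p=\infty$: since $(|\alpha|^{|\alpha|}/\alpha^\alpha)^{1/p}=1$ when $p=\infty$, writing $f(w)=\sum_\alpha a_\alpha w^\alpha+\sum_\alpha\overline{b}_\alpha\overline{w}^\alpha$ gives $\partial^{|m|}f/\partial z^m(0)+\overline{\partial^{|m|}f/\partial\overline z^m(0)}=m!(a_m+b_m)$, of modulus $\le 2(m!)(1-\mathrm{Re}\,f(0))$. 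For general $z\in\mathbb{B}_{\ell_\infty^n}$ I would set $\psi=(\psi_1,\dots,\psi_n)$ with $\psi_k(w_k)=(z_k+w_k)/(1+\overline{z_k}w_k)$, a holomorphic automorphism of the polydisc with $\psi(0)=z$, and put $F=f\circ\psi$. As each $\psi_k$ is holomorphic, $F$ is again pluriharmonic, and since $\psi$ is onto one has $\sup_w\mathrm{Re}\,F(w)=\sup_\zeta\mathrm{Re}\,f(\zeta)\le 1$ and $\mathrm{Re}\,F(0)=\mathrm{Re}\,f(z)$. Writing $F(w)=\sum_\alpha A_\alpha w^\alpha+\sum_\alpha\overline{B}_\alpha\overline{w}^\alpha$, Theorem~\ref{lem-0} (inequality (\ref{c-1.7}) with $p=\infty$) yields $|A_\beta+B_\beta|\le 2(1-\mathrm{Re}\,f(z))$ for every $\beta$ with $|\beta|\ge 1$.

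Next I would convert the coefficients of $F$ back into the mixed derivatives of $f$ at $z$. Write $f=h+\overline{g}$ with $h,g$ holomorphic, so $F=H+\overline{G}$ with $H=h\circ\psi$, $G=g\circ\psi$ and $A_\alpha,B_\alpha$ the Taylor coefficients of $H,G$ at $0$. The inverse $\Phi=\psi^{-1}=(\Phi_1,\dots,\Phi_n)$, $\Phi_k(\zeta_k)=(\zeta_k-z_k)/(1-\overline{z_k}\zeta_k)$, is again a product map with $\Phi_k(z_k)=0$, so from $h=H\circ\Phi$ one gets the absolutely convergent expansion $h(\zeta)=\sum_\beta A_\beta\prod_k\Phi_k(\zeta_k)^{\beta_k}$ near $z$. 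Applying $\partial^{|m|}/\partial\zeta^m$ and evaluating at $z$ factorizes across coordinates, and since $\Phi_k^{\beta_k}$ vanishes to order $\beta_k$ at $z_k$ only the finitely many $\beta$ with $1\le\beta_k\le m_k$ (and $\beta_k=0$ where $m_k=0$) survive. Using $\overline{\partial^{|m|}f/\partial\overline z^m(z)}=\partial^{|m|}g/\partial z^m(z)$, this produces the exact identity
\[
\frac{\partial^{|m|}f}{\partial z^m}(z)+\overline{\frac{\partial^{|m|}f}{\partial\overline z^m}(z)}=m!\sum_{\beta}(A_\beta+B_\beta)\prod_{k}T_k(\beta_k),
\]
where $T_k(\beta_k)=\frac{1}{m_k!}\frac{d^{m_k}}{d\zeta_k^{m_k}}\Phi_k(\zeta_k)^{\beta_k}\big|_{\zeta_k=z_k}$ is the $m_k$-th Taylor coefficient of $\Phi_k^{\beta_k}$ about $z_k$, and $T_k(0)=1$.

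Taking moduli and inserting $|A_\beta+B_\beta|\le 2(1-\mathrm{Re}\,f(z))$ (legitimate since each surviving $\beta$ has $|\beta|\ge N\ge 1$), the sum factorizes as $\sum_\beta\prod_k|T_k(\beta_k)|=\prod_{k}S_k$ with $S_k=\sum_{\beta_k=1}^{m_k}|T_k(\beta_k)|$ for $m_k\ge 1$ and $S_k=1$ for $m_k=0$. Here $S_k$ is exactly the one-variable sum appearing in the proof of the Ruscheweyh estimate (Theorem~\Ref{Higher-1}, cf.\ \cite{AW,AW-09}), and I would cite/reprove that it equals $(1-|z_k|)^{-m_k}(1+|z_k|)^{-1}$, noting that $|T_k(\beta_k)|$ depends only on $|z_k|$ by the rotation covariance of $\Phi_k$. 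To pass from the individual $|z_k|$ to $r:=\|z\|_\infty$, I would use that $\phi_{m_k}(x)=(1-x)^{m_k}(1+x)$ is nonincreasing on $[0,1]$ for $m_k\ge 1$, whence $(1-|z_k|)^{m_k}(1+|z_k|)\ge(1-r)^{m_k}(1+r)$; multiplying over the $N$ indices with $m_k\ge 1$ gives $\prod_k S_k\le(1-r)^{-|m|}(1+r)^{-N}=(1+r)^{|m|-N}/(1-r^2)^{|m|}$, which is exactly the asserted bound. Finally, sharpness of the constant $2$ is inherited at $z=0$, where the inequality coincides with (\ref{c-1.7}) for $p=\infty$, whose constant $2$ is sharp by Theorem~\ref{lem-0}.

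The main obstacle I anticipate lies in the two middle steps: establishing the coordinate-wise factorization of the mixed derivative of $h=H\circ\Phi$ (which works only because $\psi$, hence $\Phi$, is a product of one-variable M\"obius maps), and evaluating the one-variable Ruscheweyh sum $S_k=(1-|z_k|)^{-m_k}(1+|z_k|)^{-1}$ together with the clean product structure $\prod_k S_k$. The concluding monotonicity step that replaces each $|z_k|$ by $\|z\|_\infty$ is elementary but is precisely what produces the exponent $|m|-N$ and the appearance of $N$.
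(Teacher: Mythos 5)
Your proposal is correct and follows essentially the same route as the paper: a coefficient bound at the origin from Theorem \ref{lem-0}, transfer to a general point via the coordinatewise M\"obius automorphism of the polydisc, the expansion of the mixed derivative in the new coefficients with binomial weights, and the final monotonicity step replacing each $|z_k|$ by $\|z\|_{\infty}$. The only (cosmetic) difference is that you obtain the key identity by directly computing the Taylor coefficients $T_k(\beta_k)$ of $\Phi_k^{\beta_k}$ and summing the Ruscheweyh-type series, whereas the paper derives the same inequality \eqref{ch-pp-1} via the Cauchy integral formula and a change of variables as in \cite[Theorem 1]{CR}.
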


We will use Theorems \ref{lem-3.1} and \ref{lem-0} to investigate
the Bohr phenomenon of    complex-valued
pluriharmonic functions.

Let $\mathcal{R}^{\ast}_{P}(\mathbb{B}_{\ell_{p}^{n}})$ denote the
$n$-dimensional Bohr radius: the largest number $\rho$ such that if
$f(z)=\sum_{\alpha}a_{\alpha}z^{\alpha}+\sum_{\alpha}\overline{b}_{\alpha}\overline{z}^{\alpha}\in\mathscr{PH}(\mathbb{B}_{\ell_{p}^{n}})$
with $b_0=0$ and $\sup_{z\in\mathbb{B}_{\ell_{p}^{n}}}|f(z)|\leq 1$,
then

\begin{equation}\label{hhk-1}
\sum_{k=1}^{\infty}\sum_{|\alpha|=k}(|a_{\alpha}|+|b_{\alpha}|)|z^{\alpha}|\leq 1
\end{equation} when $z\in\rho\mathbb{B}_{\ell_{p}^{n}}.$


By applying Theorem \ref{lem-3.1}, we obtain a Bohr type inequality which is an analogue of Theorems \Ref{eq-Def-2011}
and \Ref{Bohr-t1}
as follows.
Note that
the proof of
Theorem \Ref{eq-Def-2011}  depends on  Wiener's result (see
\cite{D-2011}). However, the proof of Theorem \ref{thm-2.1x}
does not need  Wiener's result.

\begin{thm}\label{thm-2.1x}
Let $p\in[1,\infty]$ and $n\geq2$. The $n$-dimensional Bohr radius
$\mathcal{R}^{\ast}_{P}(\mathbb{B}_{\ell_{p}^{n}})$ satisfies
\begin{equation}\label{eq-T-1}
C_{1}\left(\frac{1}{n}\right)^{1-\frac{1}{\min\{p,2\}}}\leq\mathcal{R}^{\ast}_{P}(\mathbb{B}_{\ell_{p}^{n}})\leq
C_{2}\left(\frac{\log n}{n}\right)^{1-\frac{1}{\min\{p,2\}}},
\end{equation}
where $C_{j}>0~(j=1,2)$ are absolute constants.
\end{thm}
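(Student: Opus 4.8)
The plan is to prove the two inequalities in (\ref{eq-T-1}) separately.

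For the upper estimate I would use that the admissible class for $\mathcal{R}^{\ast}_{P}$ contains all bounded holomorphic functions (take $g\equiv0$). For a homogeneous bounded holomorphic polynomial $P=\sum_{|\alpha|=m}c_{\alpha}z^{\alpha}$ one has $a_{0}=0$, so the pluriharmonic majorant in (\ref{hhk-1}) is exactly the classical power-series majorant $\sum_{|\alpha|=m}|c_{\alpha}||z^{\alpha}|$. Since the upper estimates in Theorems \Ref{eq-Def-2011} and \Ref{Bohr-t1} are witnessed by functions that may be taken to be homogeneous polynomials (in particular with $a_{0}=0$), these same functions make (\ref{hhk-1}) fail just beyond the holomorphic Bohr radius; hence $\mathcal{R}^{\ast}_{P}(\mathbb{B}_{\ell_{p}^{n}})\leq\mathcal{R}(\mathbb{B}_{\ell_{p}^{n}})$ and the right-hand inequality of (\ref{eq-T-1}) is immediate from those theorems.

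For the lower estimate in the range $p\in[1,2]$ I would argue directly from Theorem \ref{lem-3.1}. With $k=|\alpha|$ and the multinomial coefficient $\binom{k}{\alpha}=k!/\alpha!$, the elementary inequality $k^{k}\leq e^{k}k!$ gives $(k^{k}/\alpha^{\alpha})^{1/p}\leq e^{k/p}\binom{k}{\alpha}^{1/p}$, so Theorem \ref{lem-3.1} together with Hölder's inequality yields
\[
\sum_{|\alpha|=k}(|a_{\alpha}|+|b_{\alpha}|)\,|z^{\alpha}|
\leq\frac{4}{\pi}\,e^{k/p}\Big(\sum_{|\alpha|=k}\binom{k}{\alpha}|z^{\alpha}|\Big)^{1/p}\Big(\sum_{|\alpha|=k}|z^{\alpha}|\Big)^{1-1/p}
\leq\frac{4}{\pi}\big(e^{1/p}\|z\|_{1}\big)^{k},
\]
because both inner sums are at most $\|z\|_{1}^{k}$. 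Inserting $\|z\|_{1}\leq n^{1-1/p}\|z\|_{p}$ and summing the geometric series over $k\geq1$ shows that the majorant in (\ref{hhk-1}) stays below $1$ as soon as $\|z\|_{p}\leq C_{1}(1/n)^{1-1/p}$; since $1-1/\min\{p,2\}=1-1/p$ on this range, this is the asserted bound, with an absolute constant because $e^{1/p}\leq e$.

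The range $p\in[2,\infty]$, where $1-1/\min\{p,2\}=\tfrac12$, is the part I expect to be the main obstacle: the per-coefficient estimate of Theorem \ref{lem-3.1} is too weak, since even the first homogeneous layer, bounded term by term, produces the factor $\|z\|_{1}\leq n^{1-1/p}\|z\|_{p}$ and hence only the inferior radius $(1/n)^{1-1/p}<(1/n)^{1/2}$. The difficulty is that the coefficients of a single admissible $f$ cannot all be extremal simultaneously, and this must be captured through mean-square rather than sup information. My plan is to expand $f=\sum_{k\geq0}\big(h_{k}+\overline{g_{k}}\big)$ into homogeneous pluriharmonic layers and to exploit the circularity of $\mathbb{B}_{\ell_{p}^{n}}$: the projection $h_{k}(z)=\frac{1}{2\pi}\int_{0}^{2\pi}f(e^{i\theta}z)e^{-ik\theta}\,d\theta$ (and the analogue for $g_{k}$) gives $\|h_{k}\|_{\infty},\|g_{k}\|_{\infty}\leq\|f\|_{\infty}\leq1$, so that $\|h_{k}\|_{L^{2}(\partial\mathbb{B}_{\ell_{p}^{n}})}\leq1$ controls the weighted $\ell_{2}$-norm of the coefficients of that layer. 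Feeding this $\ell_{2}$ information into the Cauchy–Schwarz inequality (for $p=2$ this already replaces $n^{1-1/p}$ by $\sqrt{n}$; for $p>2$ one needs in addition the $L^{2}$-norms of the monomials on the $\ell_{p}$-sphere, or equivalently the Bohnenblust–Hille inequality underlying Theorems \Ref{eq-Def-2011} and \Ref{Bohr-t1}, with Theorem \ref{lem-3.1} playing the role of Wiener's inequality) turns the majorant into a geometric series in $C\sqrt{n}\,\|z\|_{p}$, which stays below $1$ for $\|z\|_{p}\leq C_{1}n^{-1/2}$. Carrying out this last estimate uniformly in $p\in[2,\infty]$ and checking that $C_{1}$ is absolute is where the real work lies.
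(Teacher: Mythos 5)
Your upper bound and your lower bound for $p\in[1,2]$ are essentially correct and coincide in substance with the paper's proof. The paper's upper estimate is likewise obtained from the Kahane--Salem--Zygmund homogeneous polynomials, for which the pluriharmonic majorant in (\ref{hhk-1}) reduces to the holomorphic one because $a_0=0$ (note that your intermediate claim $\mathcal{R}^{\ast}_{P}(\mathbb{B}_{\ell_{p}^{n}})\leq\mathcal{R}(\mathbb{B}_{\ell_{p}^{n}})$ is not a literal class-inclusion statement, since the two majorant conditions differ in the constant term; what actually transfers is the bound witnessed by the homogeneous examples, which is all you need). Your treatment of $p\in[1,2]$ via Theorem \ref{lem-3.1}, the bound $k^{k}/\alpha^{\alpha}\leq e^{k}\,k!/\alpha!$ and the multinomial identity $\sum_{|\alpha|=k}(k!/\alpha!)|z^{\alpha}|=\|z\|_{1}^{k}$ is a harmless variant of the paper's computation, which keeps the factor $k^{k}/k!$ and quotes from \cite{B-2000} the numerical value of the resulting absolute constant.

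The genuine gap is the case $p\in[2,\infty]$, which you only outline, and the outline overestimates what is needed. The missing ingredient is a precise weighted $\ell^{2}$ coefficient bound per homogeneous layer, namely the paper's Lemma \ref{lem-3.1ch}: for every $z\in\mathbb{B}_{\ell_{p}^{n}}$ and every $k\geq1$,
\begin{equation*}
\sum_{|\alpha|=k}\bigl(|a_{\alpha}|^{2}+|b_{\alpha}|^{2}\bigr)\,|z^{\alpha}|^{2}\leq\frac{16}{\pi^{2}}.
\end{equation*}
This follows from exactly the circular-slice projections you mention, but pushed one step further: the projections give the pointwise bound $\bigl|\sum_{|\alpha|=k}a_{\alpha}z^{\alpha}+\sum_{|\alpha|=k}\overline{b}_{\alpha}\overline{z}^{\alpha}\bigr|\leq\frac{1}{\pi}\int_{0}^{2\pi}|\cos k\vartheta|\,d\vartheta=\frac{4}{\pi}$ on all of $\mathbb{B}_{\ell_{p}^{n}}$, and integrating its square over the torus $(z_{1}e^{i\vartheta_{1}},\ldots,z_{n}e^{i\vartheta_{n}})$ yields the displayed inequality by orthogonality. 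Once this is available, no $L^{2}$-norms of monomials on the $\ell_{p}$-sphere and no Bohnenblust--Hille inequality are required for any $p\in[2,\infty]$: setting $\rho_{n}=\pi/((\pi+4\sqrt{2})\sqrt{n})$ and $\zeta=z/\rho_{n}$ for $\|z\|_{p}\leq\rho_{n}$, the Cauchy--Schwarz inequality against the crude count $\sum_{|\alpha|=k}1\leq n^{k}$ gives $\sum_{|\alpha|=k}(|a_{\alpha}|+|b_{\alpha}|)|z^{\alpha}|\leq\frac{4\sqrt{2}}{\pi}\bigl(\sqrt{n}\,\rho_{n}\bigr)^{k}$, and the resulting geometric series sums to exactly $1$. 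As your proposal stands, the $p\in[2,\infty]$ half of the lower estimate --- precisely the range where the exponent $\tfrac12$ in (\ref{eq-T-1}) arises --- is not proved: the per-coefficient use of Theorem \ref{lem-3.1} only yields the inferior radius $n^{-(1-1/p)}$ there, as you yourself observe, and the additional machinery you propose to invoke is both unnecessary and not carried out.
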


\begin{rem}
Is there the Bohr's phenomenon if one replace (\ref{hhk-1}) by
$\sum_{\alpha}(|a_{\alpha}|+|b_{\alpha}|)|z^{\alpha}|\leq 1$? The
following example shows that the answer is no.
\end{rem}

\begin{exam}
For $z\in\mathbb{B}_{\ell_{p}^{n}}$,
let
$\mathcal{F}_{k}(z)=\mbox{Re}(z_1)\sin\frac{1}{k}+i\cos\frac{1}{k}$,
where $k\in\{1,2,\ldots\}$. Then
$\mathcal{F}_{k}\in\mathscr{PH}(\mathbb{B}_{\ell_{p}^{n}})$ and
$\sup_{z\in\mathbb{B}_{\ell_{p}^{n}}}|\mathcal{F}_{k}(z)|\leq 1$ for
all $k\in\{1,2,\ldots\}$. Suppose that there is $\rho_{0}>0$ such
that
$$\left|\sin\frac{1}{k}\right||z_{1}|+\left|\cos\frac{1}{k}\right|\leq 1$$
for $\|z\|_{p}<\rho_{0}$ and all $k\in\{1,2,\ldots\}$. This
contradicts $\lim_{k\rightarrow\infty}\cos\frac{1}{k}=1$.
\end{exam}

For $p\in[1,\infty]$,
let
$$\mathscr{PH}_{+}(\mathbb{B}_{\ell_{p}^{n}})=\{f\in\mathscr{PH}(\mathbb{B}_{\ell_{p}^{n}}):~\sup_{z\in\mathbb{B}_{\ell_{p}^{n}}}{\rm
Re}(f(z))\leq 1~\mbox{and}~f(0)\geq0\}.$$  We use
$\mathcal{R}_{P}(\mathbb{B}_{\ell_{p}^{n}})$ to denote the
$n$-dimensional Bohr radius: the largest number $\rho$ such that if
$f(z)=\sum_{\alpha}a_{\alpha}z^{\alpha}
+\sum_{\alpha}\overline{b}_{\alpha}\overline{z}^{\alpha}
\in\mathscr{PH}_{+}(\mathbb{B}_{\ell_{p}^{n}})$ with $b_0=0$, then
$\sum_{\alpha}|(a_{\alpha}+b_{\alpha})z^{\alpha}|\leq 1$ when
$z\in\rho\mathbb{B}_{\ell_{p}^{n}}.$
In fact, the assumption $f(0)\geq0$ in
$\mathscr{PH}_{+}(\mathbb{B}_{\ell_{p}^{n}})$ is necessary. Without
this condition, there is no Bohr's phenomenon on the set
$$\mathscr{PH}_{1}(\mathbb{B}_{\ell_{p}^{n}})=\{f\in\mathscr{PH}(\mathbb{B}_{\ell_{p}^{n}}):~\sup_{z\in\mathbb{B}_{\ell_{p}^{n}}}{\rm
Re}(f(z))\leq 1\}.$$ Here is an example.

\begin{exam}
For $z=(z_{1},\ldots,z_{n})\in\mathbb{B}_{\ell_{p}^{n}}$, let
$f(z)=\mbox{Re}\left(\frac{-2z_{1}}{1-z_{1}}\right)=-\sum_{j=1}^{\infty}z_{1}^{j}-\sum_{j=1}^{\infty}\overline{z}_{1}^{j}$,
and let
$F_{k}(z)=\left(\sin\frac{1}{k}\right)f(z)+i\cos\frac{1}{k}$, where
$k\in\{1,2,\ldots\}$. Then
$F_{k}\in\mathscr{PH}_{1}(\mathbb{B}_{\ell_{p}^{n}})$. Suppose that
there is $\rho_{0}>0$ such that
$$\sum_{j=1}^{\infty}2\left|\sin\frac{1}{k}\right||z_{1}|^{j}+\left|\cos\frac{1}{k}\right|\leq 1$$
for $\|z\|_{p}<\rho_{0}$ and all $k\in\{1,2,\ldots\}$. This
contradicts $\lim_{k\rightarrow\infty}\cos\frac{1}{k}=1$. Therefore,
there is no Bohr's phenomenon on
$\mathscr{PH}_{1}(\mathbb{B}_{\ell_{p}^{n}}).$
\end{exam}

 In the following, by using Theorem \ref{lem-0}, we shall show that we can go
further: Theorems \Ref{Bohr-t3}, \Ref{IB} and \Ref{Bohr-t1}  also
hold for a more general class
$\mathscr{PH}_{+}(\mathbb{B}_{\ell_{p}^{n}})$.

\begin{thm}\label{thm-2.0x}
Let $p\in[1,\infty]$. Then the $n$-dimensional Bohr radius
$\mathcal{R}_{P}(\mathbb{B}_{\ell_{p}^{n}})$ satisfies

$$ \begin{cases}
\displaystyle \mathcal{R}_{P}(\mathbb{B}_{\ell_{p}^{n}})=\frac{1}{3}, &\, n=1~\mbox{and}~p\in[1,\infty],\\
\displaystyle\\ \displaystyle \frac{1}{C}\left(\frac{\log
n}{n}\right)^{1-\frac{1}{\min\{p,2\}}}\leq\mathcal{R}_{P}(\mathbb{B}_{\ell_{p}^{n}})\leq
  C\left(\frac{\log n}{n}\right)^{1-\frac{1}{\min\{p,2\}}}, &\,
  n\geq2~\mbox{and}~p\in[1,\infty),\\
\displaystyle\\ \displaystyle
\lim_{n\rightarrow\infty}\frac{\mathcal{R}_{P}(\mathbb{B}_{\ell_{p}^{n}})}{\sqrt{\frac{\log
n}{n}}}=1,&\, n\geq2~\mbox{and}~p=\infty,
\end{cases}$$ where $C\geq1$ is an absolute constant.
In particular, if $n=1$, then the constant $1/3$ is sharp.
\end{thm}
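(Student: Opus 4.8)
The plan is to reduce the pluriharmonic Bohr problem to the purely holomorphic one through the coefficient estimate of Theorem~\ref{lem-0}, the decisive point being that the factor $2$ occurring there matches exactly the worst case of the Wiener-type coefficient bound in the holomorphic theory. Throughout set $c_{\alpha}=a_{\alpha}+b_{\alpha}$, so that $c_{0}=a_{0}=f(0)\geq0$ (because $b_{0}=0$) and ${\rm Re}(f(0))=a_{0}$, and write $P_{k}(z)=\sum_{|\alpha|=k}c_{\alpha}z^{\alpha}$. Theorem~\ref{lem-0} then gives $\sup_{z\in\mathbb{B}_{\ell_{p}^{n}}}|P_{k}(z)|\leq2(1-a_{0})$ for every $k\geq1$, together with $|c_{\alpha}|\leq2(|\alpha|^{|\alpha|}/\alpha^{\alpha})^{1/p}(1-a_{0})$; these are the only facts about $f$ that enter.

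First I would treat $n=1$, where $\mathbb{B}_{\ell_{p}^{1}}=\mathbb{D}$ for every $p$ and the multi-index factor equals $1$. For $|z|=r$ the estimate of Theorem~\ref{lem-0} yields
$$\sum_{\alpha}|c_{\alpha}z^{\alpha}|\leq a_{0}+2(1-a_{0})\sum_{k=1}^{\infty}r^{k}=a_{0}+2(1-a_{0})\frac{r}{1-r},$$
which is at most $1$ exactly when $2r/(1-r)\leq1$, i.e. $r\leq1/3$, independently of $a_{0}$; hence $\mathcal{R}_{P}(\mathbb{D})\geq1/3$. For sharpness take the holomorphic map $f(z)=2z/(1+z)$, which lies in $\mathscr{PH}_{+}(\mathbb{D})$ with $b_{0}=0$ and $f(0)=0$, satisfies ${\rm Re}(f)\leq1$, and has $c_{k}=2(-1)^{k-1}$; thus $\sum_{k}|c_{k}|r^{k}=2r/(1-r)>1$ for every $r>1/3$, forcing $\mathcal{R}_{P}(\mathbb{D})=1/3$.

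For $n\geq2$ the lower bounds are obtained by rerunning the estimates underlying Theorems~\Ref{IB} and \Ref{Bohr-t1}, which bound each homogeneous Bohr sum $\sum_{|\alpha|=k}|c_{\alpha}||z^{\alpha}|$ by $(\sup_{z}|P_{k}|)\,\beta_{k}(z)$, where $\beta_{k}(z)$ is the $k$-homogeneous Bohr factor depending only on $p,n,k$ (this is where the Bohnenblust--Hille inequality enters). Inserting $\sup_{z}|P_{k}|\leq2(1-a_{0})$ and summing gives $\sum_{\alpha}|c_{\alpha}z^{\alpha}|\leq a_{0}+2(1-a_{0})\sum_{k\geq1}\beta_{k}(z)$, so the Bohr inequality holds as soon as $\sum_{k\geq1}\beta_{k}(z)\leq1/2$, for every $a_{0}\in[0,1)$. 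Since this coincides with the condition produced by the holomorphic argument in the worst case $|g(0)|\to1$ of the classical Wiener bound $1-|g(0)|^{2}$ for bounded holomorphic $g$, the admissible radius is the same; this yields the lower bounds $\frac{1}{C}(\log n/n)^{1-1/\min\{p,2\}}$ for $p<\infty$ (absorbing the factor $2$ into the constant) and the lower asymptotic $\sqrt{\log n/n}$ with constant $1$ for $p=\infty$. For the matching upper bounds I would use $f(z)=a_{0}-{\rm Re}(Q(z))$ with $Q$ an $m$-homogeneous polynomial satisfying $\|Q\|_{\infty}\leq1-a_{0}$ on $\mathbb{B}_{\ell_{p}^{n}}$: such $f$ lies in $\mathscr{PH}_{+}$ with $b_{0}=0$, and a direct computation gives $c_{\alpha}=-q_{\alpha}$ for $|\alpha|=m$, whence $\sum_{\alpha}|c_{\alpha}z^{\alpha}|=a_{0}+(1-a_{0})\sum_{|\alpha|=m}|\widetilde{q}_{\alpha}||z^{\alpha}|$ with $\widetilde{Q}=Q/(1-a_{0})$ of sup-norm $\leq1$. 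This exceeds $1$ exactly when the homogeneous Bohr sum of $\widetilde{Q}$ exceeds $1$, so the extremal polynomials (Kahane--Salem--Zygmund type when $p=\infty$) used for the upper estimates in Theorems~\Ref{IB} and \Ref{Bohr-t1} transfer verbatim.

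The main obstacle is the exact constant $1$ in the case $p=\infty$: one must check that the factor $2$ of Theorem~\ref{lem-0} does not spoil the leading asymptotic. This is resolved by the observation above that $2(1-a_{0})$ plays exactly the role of the Wiener factor $1-|g(0)|^{2}$ in its worst case $|g(0)|\to1$, so that the pluriharmonic and holomorphic problems reduce to the identical homogeneous estimate $\sum_{k\geq1}\beta_{k}(z)\leq1/2$; the sharp asymptotics of Bayart--Pellegrino--Seoane (Theorem~\Ref{IB}) then carry over with the same constant. A secondary technical point is that, since the coefficients $c_{\alpha}$ need not arise from a single bounded holomorphic function, the argument must invoke the internal homogeneous estimates of the cited proofs rather than merely their final statements.
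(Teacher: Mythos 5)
Your proposal is correct and follows essentially the same route as the paper: the case $n=1$ via the homogeneous coefficient bound of Theorem~\ref{lem-0} with an equivalent extremal function, the lower bounds for $n\geq2$ by feeding $\sup_z|P_k|\leq2(1-f(0))$ into the internal homogeneous machinery of the holomorphic proofs (the unconditional basis constant bound of Defant--Frerick for $p<\infty$ and the Bohnenblust--Hille/Bayart--Pellegrino--Seoane estimates for $p=\infty$, reduced to the threshold $\sum_k\beta_k\leq1/2$ exactly as you describe), and the upper bounds via Kahane--Salem--Zygmund polynomials. The only cosmetic differences are your choice of $2z/(1+z)$ in place of the paper's $-2z/(1-z)$ for $n=1$ and the slightly more general test functions $a_0-\mathrm{Re}(Q)$ for the upper bounds, where the paper simply uses bounded holomorphic $f$ with $f(0)=0$.
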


\begin{rem}\label{rem-1}
It is obvious that if $f\in
\mathscr{H}_{1}(\mathbb{B}_{\ell_{\infty}^{n}})$ with $f(0)=0$, then
$f\in\mathscr{PH}_{+}(\mathbb{B}_{\ell_{\infty}^{n}})$. Furthermore,
if $f(z)=\sum_{\alpha}a_{\alpha}z^{\alpha}\in
\mathscr{H}_{1}(\mathbb{B}_{\ell_{\infty}^{n}})$ with $f(0)\neq0$,
then $e^{-i\arg
f(0)}f\in\mathscr{PH}_{+}(\mathbb{B}_{\ell_{\infty}^{n}})$. Note
that $\sum_{\alpha}|e^{-i\arg
f(0)}a_{\alpha}z^{\alpha}|=\sum_{\alpha}|a_{\alpha}z^{\alpha}|$.
 Hence Theorem \ref{thm-2.0x}
is an improvement of
Theorems  \Ref{IB} and \Ref{Bohr-t1}.
In particular, the proof of
Theorem \Ref{IB} depends on  Wiener's result (see
\cite{BPS}). However, the proof of Theorem \ref{thm-2.0x}
does not need  Wiener's result.
\end{rem}

\section{The Schwarz-Pick type lemmas and their applications}\label{csw-sec2}

For a differentiable mapping $f:\mathbf{B}_{\ell_{2}^{n}}\to \mathbb{R}^k$,
let $Df(z)$ denote the Fr\'{e}chet derivative of $f$ at $z$, where  $k$ is a positive integer.

\begin{lem}
\label{lem-Schwarz-def}
For $p\in(1,\infty)$, let $f$ be a $C^1$ class  function of $\mathbf{B}_{\ell_{2}^{n}}$
into $\mathbf{B}_{\ell_{p}^{k}}$, where  $k$ is a positive integer.
Then,
\be\label{lem-ch1}
|\nabla \|f(z)\|_{p}|=\sup_{\| \theta\|_2=1}\limsup_{\rho\rightarrow 0^{+}}
\frac{|\|f(z+\rho \theta)\|_{p}-\|f(z)\|_{p}|}{\rho},
\quad
z\in \mathbf{B}_{\ell_{2}^{n}}
\ee
holds.
\end{lem}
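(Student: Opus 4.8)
The plan is to prove the two inequalities between the left-hand side $L:=|\nabla\|f(z)\|_{p}|$ and the right-hand side $R$ of (\ref{lem-ch1}) separately. Throughout I fix $z\in\mathbf{B}_{\ell_{2}^{n}}$ and abbreviate $v:=f(z)$. The inequality $R\le L$ is immediate: for a fixed unit vector $\theta$, taking $w=z+\rho\theta$ with $\rho\to 0^{+}$ is one particular way of sending $w\to z$ with $\|z-w\|_{2}=\rho$, so the inner $\limsup$ defining $R$ for this $\theta$ is bounded above by the unrestricted $\limsup$ defining $L$; taking the supremum over $\theta$ gives $R\le L$.

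For the reverse inequality I would first use that $f$ is differentiable at $z$ to linearize: writing $w=z+\rho\theta$ with $\rho=\|w-z\|_{2}$ and $\|\theta\|_{2}=1$, one has $f(w)=v+\rho\,Df(z)\theta+o(\rho)$ as $\rho\to 0^{+}$, where the error is $o(\rho)$ independently of the direction $\theta$ (it is a single limit). The reverse triangle inequality for $\|\cdot\|_{p}$, applied twice, then shows that the difference quotient $\frac{|\,\|f(w)\|_{p}-\|v\|_{p}\,|}{\rho}$ differs from the linearized quotient
\[
\Phi(\rho,\theta):=\frac{\big|\,\|v+\rho\,Df(z)\theta\|_{p}-\|v\|_{p}\,\big|}{\rho}
\]
by a quantity that tends to $0$ as $\rho\to 0^{+}$, uniformly in $\theta$. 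Consequently $L=\limsup_{w\to z}\Phi(\|w-z\|_{2},(w-z)/\|w-z\|_{2})$, while along each ray $\limsup_{\rho\to 0^{+}}\frac{|\,\|f(z+\rho\theta)\|_{p}-\|v\|_{p}\,|}{\rho}=\limsup_{\rho\to 0^{+}}\Phi(\rho,\theta)$.

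The key step is to show that for each unit $\theta$ the limit $\psi(\theta):=\lim_{\rho\to 0^{+}}\Phi(\rho,\theta)$ exists and that $\Phi(\rho,\cdot)\to\psi$ uniformly on the unit sphere $S=\{\theta:\|\theta\|_{2}=1\}$ as $\rho\to 0^{+}$. Here I split into two cases. If $v\neq 0$, then since $p\in(1,\infty)$ the norm $N:=\|\cdot\|_{p}$ is of class $C^{1}$ on a neighborhood of $v$; writing $\Phi(\rho,\theta)=\big|\int_{0}^{1}\langle\nabla N(v+s\rho\,Df(z)\theta),Df(z)\theta\rangle\,ds\big|$ and using the uniform continuity of $\nabla N$ near $v$ together with the compactness of the set $\{Df(z)\theta:\theta\in S\}$, one obtains $\Phi(\rho,\theta)\to|\langle\nabla N(v),Df(z)\theta\rangle|=:\psi(\theta)$ uniformly in $\theta$. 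If $v=0$, then $\Phi(\rho,\theta)=\|Df(z)\theta\|_{p}=:\psi(\theta)$ for every $\rho$, so the convergence is trivially uniform. In either case, uniform convergence yields $\sup_{\theta\in S}\Phi(\rho,\theta)\to\sup_{\theta\in S}\psi(\theta)$ as $\rho\to 0^{+}$.

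Combining these ingredients finishes the argument. Since $\Phi(\rho,\theta)\le\sup_{\theta\in S}\Phi(\rho,\theta)$, passing to the $\limsup$ as $w\to z$ gives $L\le\limsup_{\rho\to 0^{+}}\sup_{\theta\in S}\Phi(\rho,\theta)=\sup_{\theta\in S}\psi(\theta)$, whereas working along rays gives $R=\sup_{\theta\in S}\limsup_{\rho\to 0^{+}}\Phi(\rho,\theta)=\sup_{\theta\in S}\psi(\theta)$. Hence $L\le R$, and together with $R\le L$ this proves (\ref{lem-ch1}). I expect the main obstacle to be the interchange of the supremum over directions with the limit in $\rho$; this is exactly what the uniform convergence of $\Phi(\rho,\cdot)$ secures, and it is here that the hypothesis $p\in(1,\infty)$—guaranteeing the $C^{1}$ smoothness of the $p$-norm away from the origin—is essential.
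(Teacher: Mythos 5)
Your proposal is correct and follows essentially the same route as the paper: both arguments split into the cases $f(z)=0$ and $f(z)\neq 0$, use the Fr\'echet differentiability of $f$ together with the $C^1$ smoothness of $\|\cdot\|_{p}$ away from the origin (valid precisely because $p\in(1,\infty)$), and identify both sides of the identity with $\sup_{\|\theta\|_2=1}\|Df(z)\theta\|_p$ (resp.\ $\sup_{\|\theta\|_2=1}|D\Psi(z)\theta|$ with $\Psi=\|f\|_p$). Your write-up is somewhat more explicit than the paper's about why the supremum over directions may be interchanged with the limit in $\rho$ (via the uniform convergence of $\Phi(\rho,\cdot)$ on the unit sphere), which is a point the paper leaves implicit.
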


\begin{proof} We divide the proof into two cases.
\bca\label{cla-l-1}
Let $z\in\mathscr{P}:=\{\varsigma\in\mathbf{B}_{\ell_{2}^{n}}:~f(\varsigma)=0\}$  be fixed. \eca
Elementary calculations lead to
\beqq
\sup_{\| \theta\|_2=1}\limsup_{\rho\rightarrow 0^{+}}
\frac{|\|f(z+\rho \theta)\|_{p}-\|f(z)\|_{p}|}{\rho}&=&\sup_{\| \theta\|_2=1}\limsup_{\rho\rightarrow 0^{+}}
\frac{\|f(z+\rho \theta)-f(z)\|_{p}}{\rho}\\
&=&\sup_{\| \theta\|_2=1}\| Df(z)\theta\|_p
\eeqq
and
\beqq
|\nabla \|f(z)\|_{p}|
&=&
\limsup_{w\rightarrow z}\frac{\|f(w)-f(z)\|_{p}}{\|w-z\|_{2}}
=
\limsup_{w\rightarrow z}\left\| Df(z)\frac{w-z}{\|w-z\|_{2}}\right\|_p
\\
&=&
\sup_{\| \theta\|_2=1}\| Df(z)\theta\|_p,
\eeqq
which imply (\ref{lem-ch1}).

\bca\label{cla-l-2}
Let $z\in\mathbf{B}_{\ell_{2}^{n}}\setminus\mathscr{P}$  be fixed. \eca
In this case,
let $\Psi(w)=\| f(w)\|_p$ for $w\in \mathbf{B}_{\ell_{2}^{n}}$. It is not difficult to know that
$\Psi$ is $C^1$ on a neighbourhood of $z$. Then (\ref{lem-ch1}) follows from the following
two formulas:

\beqq
|\nabla \|f(z)\|_{p}|
&=&
\limsup_{w\rightarrow z}\frac{|\Psi(w)-\Psi(z)|}{\|w-z\|_{2}}
=
\limsup_{w\rightarrow z}\left| D\Psi(z)\frac{w-z}{\|w-z\|_{2}}\right|
\\
&=&
\sup_{\| \theta\|_2=1}|D\Psi(z)\theta|
\eeqq
and
\beqq
\sup_{\| \theta\|_2=1}\limsup_{\rho\rightarrow 0^{+}}
\frac{|\|f(z+\rho \theta)\|_{p}-\|f(z)\|_{p}|}{\rho}&=&\sup_{\| \theta\|_2=1}\limsup_{\rho\rightarrow 0^{+}}
\frac{|\Psi(z+\rho\theta)-\Psi(z)|}{\rho}\\
&=&\sup_{\| \theta\|_2=1}|D\Psi(z)\theta|.
\eeqq
This completes the proof.
\end{proof}

\subsection*{ The proof of Theorem \ref{thm-0}}
For $p\in(1,\infty)$ and $n\geq3$, let $u=(u_{1},\ldots,u_{\nu})$ be a harmonic function of $\mathbf{B}_{\ell_{2}^{n}}$ into $\mathbf{B}_{\ell_{p}^{\nu}}$.
 We divide the proof of (\ref{t-0}) into two steps.
 \bst\label{bst-0.1} We first estimate $\big|\nabla \|u(x)\|_{p}\big|$ for $x\in\Omega:=\{y\in\mathbf{B}_{\ell_{2}^{n}}:~u(y)\neq0\}$.
 \est
By Lemma \ref{lem-Schwarz-def}, we have
\be\label{eq-1.4g}
|\nabla \|u(x)\|_{p}|=\max_{\vartheta\in\partial\mathbf{B}_{\ell_{2}^{n}}}\left|\sum_{j=1}^{n}\frac{\partial \|u(x)\|_{p}}{\partial x_{j}}\vartheta_{j}\right|
=
\left(\sum_{j=1}^{n}\left|\frac{\partial \|u(x)\|_{p}}{\partial x_{j}}\right|^{2}\right)^{\frac{1}{2}}.
\ee
Elementary calculations yield
\be\label{eq-1.5g}\frac{\partial \|u(x)\|_{p}}{\partial x_{j}}=\sum_{k=1}^{\nu}\frac{|u_{k}(x)|^{p-2}u_{k}(x)}{\|u(x)\|_{p}^{p-1}}\frac{\partial u_{k}(x)}{\partial x_{j}}
=\left\langle\frac{\partial u(x)}{\partial x_{j}},\eta(x)\right\rangle\ee
 for $j\in\{1,\ldots,n\}$, where
$$\frac{\partial u(x)}{\partial x_{j}}=\left(\frac{\partial u_{1}(x)}{\partial x_{j}},\ldots,\frac{\partial u_{\nu}(x)}{\partial x_{j}}\right)$$ and
 $$\eta(x)=\left(\frac{|u_{1}(x)|^{p-2}u_{1}(x)}{\|u(x)\|_{p}^{p-1}},\ldots,\frac{|u_{\nu}(x)|^{p-2}u_{\nu}(x)}{\|u(x)\|_{p}^{p-1}}\right).$$
It is not difficult to know that $\eta\in\partial\mathbf{B}_{\ell_{q}^{\nu}}$, where $\frac{1}{q}+\frac{1}{p}=1.$
By (\ref{eq-1.4g}) and (\ref{eq-1.5g}), we obtain

\be\label{eq-r1}|\nabla \|u(x)\|_{p}|=\left\|(Du(x))^{T}(\eta(x))^{T}\right\|_{2},
\ee where $``T"$ denotes the transpose of a matrix and
$$Du(x)=\left(\begin{array}{cccc}
\ds \frac{\partial u_{1}(x)}{\partial x_{1}}\;\cdots\;
 \frac{\partial u_{1}(x)}{\partial x_{n}}\\[4mm]
\vdots\;\; \;\;\;\;\;\cdots\;\;\;\;\; \;\;\vdots \\[2mm]
 \ds \frac{\partial u_{\nu}(x)}{\partial x_{1}}\; \cdots\;
 \frac{\partial u_{\nu}(x)}{\partial x_{n}}
\end{array}\right)
$$ is the Fr\'echet derivative of $u$ at $x$.

For any fixed $\alpha=(\alpha_{1},\ldots,\alpha_{\nu})\in\partial\mathbf{B}_{\ell_{q}^{\nu}}$, let
\be\label{eq-1c} u_{\alpha}(y)=\langle u(y),\alpha\rangle,
~y\in\mathbf{B}_{\ell_{2}^{n}}.\ee Then the H\"older inequality implies
$$|u_{\alpha}|=|\langle u,\alpha\rangle|\leq\left(\sum_{j=1}^{\nu}|u_{j}|^{p}\right)^{\frac{1}{p}}
\left(\sum_{j=1}^{\nu}|\alpha_{j}|^{q}\right)^{\frac{1}{q}}<1.$$
An application of Theorem \Ref{Liu-2019} to the bounded harmonic function $u_{\alpha}$ gives that, for all $x\in\mathbf{B}_{\ell_{2}^{n}}$,

\beqq
\|\nabla u_{\alpha}(x)\|_{2}&=&\left\|(Du(x))^{T}\alpha^{T}\right\|_{2}\\
&\leq&\frac{c_{n}}{1-\|x\|_{2}^{2}}
 \left\{\int_{-1}^{1}\frac{\left|t-\frac{n-2}{n}\|x\|_{2}\right|(1-t^{2})^{\frac{n-3}{2}}}{(1-2t\|x\|_{2}+\|x\|_{2}^{2})^{\frac{n-2}{2}}}dt\right\},
 \eeqq
which, together with (\ref{eq-r1}), implies that (\ref{t-0}) holds.

 \bst\label{bst-0.2} Next, we estimate $\big|\nabla \|u(x)\|_{p}\big|$ for $x\in\mathbf{B}_{\ell_{2}^{n}}\setminus\Omega$.
 \est

By Lemma \ref{lem-Schwarz-def}, we have
\beq\label{eq-1.1f}
|\nabla \|u(x)\|_{p}|&=&\limsup_{y\rightarrow x}\frac{|\|u(x)\|_{p}-\|u(y)\|_{p}|}{\|x-y\|_{2}}\\ \nonumber
&=&\max_{\vartheta\in\partial\mathbf{B}_{\ell_{2}^{n}}}\limsup_{\rho\rightarrow 0^{+}}\frac{\|u(x+\rho\vartheta)-u(x)\|_{p}}{\rho}\\ \nonumber
&=&\max_{\vartheta\in\partial\mathbf{B}_{\ell_{2}^{n}}}\left( \sum_{k=1}^{\nu}\left|\langle\nabla u_{k}(x),\vartheta\rangle\right|^{p}
\right)^{\frac{1}{p}}.
\eeq
For any fixed $\alpha=(\alpha_{1},\ldots,\alpha_{\nu})\in\partial\mathbf{B}_{\ell_{q}^{\nu}}$,
where $\frac{1}{q}+\frac{1}{p}=1$,
let $u_{\alpha}$
be the harmonic function defined in (\ref{eq-1c}).
It follows from
Theorem \Ref{Liu-2019} that

\beq\label{eq-ch-1-1}
\max_{\vartheta\in\partial\mathbf{B}_{\ell_{2}^{n}}}\left|\sum_{j=1}^{\nu}\langle\nabla u_{j}(x),\vartheta\rangle\alpha_{j}\right|&=&
\max_{\vartheta\in\partial\mathbf{B}_{\ell_{2}^{n}}}\lim_{\rho\to0^{+}}\frac{\left|u_{\alpha}(x+\rho\vartheta)-u_{\alpha}(x)\right|}{\rho}\\ \nonumber
&=&\|\nabla u_{\alpha}(x)\|_{2}\\ \nonumber
&\leq&\frac{c_{n}}{1-\|x\|_{2}^{2}}
 \left\{\int_{-1}^{1}\frac{\left|t-\frac{n-2}{n}\|x\|_{2}\right|(1-t^{2})^{\frac{n-3}{2}}}{(1-2t\|x\|_{2}+\|x\|_{2}^{2})^{\frac{n-2}{2}}}dt\right\}.
\eeq
For $j\in\{1,\ldots,\nu\}$, let $\psi_{j}(x)=\langle\nabla u_{j}(x),\vartheta\rangle$. Without loss of generality,
we may assume that $\sum_{j=1}^{\nu}|\psi_{j}(x)|^{2}\neq0$. Then  let
\be\label{eq-ch-1-2}\alpha_{j}:=\alpha_{j}(\vartheta)=\frac{|\psi_{j}(x)|^{p-2}\psi_{j}(x)}{\left(\sum_{j=1}^{\nu}|\psi_{j}(x)|^{p}\right)^{\frac{p-1}{p}}}.\ee
It is not difficult to know that $\alpha:=\alpha(\vartheta)\in\partial\mathbf{B}_{\ell_{q}^{\nu}}.$
Hence (\ref{t-0}) follows from (\ref{eq-1.1f}), (\ref{eq-ch-1-1}) and (\ref{eq-ch-1-2}).

Next, we prove the sharpness part of (\ref{t-0}). Let $u=(u_{1},0,\ldots,0)$ be a
harmonic function of $\mathbf{B}_{\ell_{2}^{n}}$ into $\mathbf{B}_{\ell_{p}^{\nu}}$, where
$u_{1}$ is an extremal function  of Theorem \Ref{Liu-2019}.  Then $\|u\|_{p}=|u_{1}|$.
We split the remaining  proof into two cases
\bca\label{cla-2-01}
Let $x\in\mathscr{P}^{\ast}:=\{y\in\mathbf{B}_{\ell_{2}^{n}}:~u_{1}(y)=0\}$. \eca
In this case, by (\ref{eq-1.1f}), we have

\beqq
|\nabla \|u(x)\|_{p}|&=&\limsup_{y\rightarrow x}\frac{|\|u(x)\|_{p}-\|u(y)\|_{p}|}{\|x-y\|_{2}}=
\limsup_{y\rightarrow x}\frac{|u_{1}(x)-u_{1}(y)|}{\|x-y\|_{2}}\\
&=&\|\nabla u_{1}(x)\|_{2}.
\eeqq

\bca
Let $x\in\mathbf{B}_{\ell_{2}^{n}}\setminus\mathscr{P}^{\ast}$. \eca
By (\ref{eq-1.4g}), we see that

\beqq
|\nabla \|u(x)\|_{p}|=\max_{\vartheta\in\partial\mathbf{B}_{\ell_{2}^{n}}}\left|\sum_{j=1}^{n}\frac{\partial |u_{1}(x)|}{\partial x_{j}}\vartheta_{j}\right|
=\max_{\vartheta\in\partial\mathbf{B}_{\ell_{2}^{n}}}\left|
\sum_{j=1}^{n}\frac{\partial u_{1}(x)}{\partial x_{j}}\frac{u_{1}(x)}{|u_{1}(x)|}\vartheta_{j}\right|=\|\nabla u_{1}(x)\|_{2}.
\eeqq
The proof of this theorem is completed.
\qed

\subsection*{ The proof of Theorem \ref{thm-2+}} Let $f=(f_{1},\ldots,f_{\nu})$ be a pluriharmonic function of $\mathbb{B}_{\ell_{2}^{n}}$
into $\mathbb{B}_{\ell_{p}^{\nu}}$.
We divide the proof of (\ref{eq-ch-3.3g}) into two cases.

\bca\label{cla-01}
$n=1$. \eca
We split the proof of this case into two steps.
\bst\label{bst-01} We first estimate $|\nabla \|f(z)\|_{p}|$ for $z=x+iy \in\Omega:=\{\mathcal{Z}\in\mathbb{D}:~\|f(\mathcal{Z})\|_{p}\neq0\}$.  \est

Since
\beqq
\max_{\alpha\in[0,2\pi]}\left|\frac{\partial
\|f(z)\|_{p}}{\partial x}\cos\alpha+\frac{\partial
\|f(z)\|_{p}}{\partial
y}\sin\alpha\right|&=&\frac{1}{2}\bigg(\left|\frac{\partial \|f(z)\|_{p}}{\partial
x}+i\frac{\partial \|f(z)\|_{p}}{\partial
y}\right|\\
&&+\left|\frac{\partial \|f(z)\|_{p}}{\partial
x}-i\frac{\partial \|f(z)\|_{p}}{\partial
y}\right|\bigg),
\eeqq
by Lemma \ref{lem-Schwarz-def}, we see that

\beq\label{en-1} |\nabla
\|f(z)\|_{p}|&=&
\max_{\alpha\in[0,2\pi]}\limsup_{\rho\rightarrow 0^{+}}\frac{|\|f(z+\rho e^{i\alpha})\|_{p}-\|f(z)\|_{p}|}{\rho}\\ \nonumber
&=&\max_{\alpha\in[0,2\pi]}\left|\frac{\partial
\|f(z)\|_{p}}{\partial x}\cos\alpha+\frac{\partial
\|f(z)\|_{p}}{\partial
y}\sin\alpha\right|\\  \nonumber
&=&\left|\frac{\partial \|f(z)\|_{p}}{\partial
z}\right|+\left|\frac{\partial \|f(z)\|_{p}}{\partial
\overline{z}}\right|. \eeq
Next, we estimate $\left|\frac{\partial \|f(z)\|_{p}}{\partial
z}\right|+\left|\frac{\partial \|f(z)\|_{p}}{\partial
\overline{z}}\right|$.
Elementary computations give that

\be\label{c-1.0h} \frac{\partial \|f(z)\|_{p}}{\partial
z}=\frac{1}{2}\|f(z)\|_{p}^{1-p}\sum_{k=1}^{\nu}|f_{k}(z)|^{p-2}\left(\frac{\partial
f_{k}(z)}{\partial
z}\overline{f_{k}(z)}+\overline{\left(\frac{\partial
f_{k}(z)}{\partial \overline{z}}\right)} f_{k}(z)\right)\ee
and \be\label{c-1.1h} \frac{\partial \|f(z)\|_{p}}{\partial
\overline{z}}=\frac{1}{2}\|f(z)\|_{p}^{1-p}\sum_{k=1}^{\nu}|f_{k}(z)|^{p-2}\left(\frac{\partial
f_{k}(z)}{\partial
\overline{z}}\overline{f_{k}(z)}+\overline{\left(\frac{\partial
f_{k}(z)}{\partial z}\right)} f_{k}(z)\right).\ee
Let  \be\label{c-1.2h}\xi(z)=\left(\frac{|f_{1}(z)|^{p-2}f_{1}(z)}{\|f(z)\|_{p}^{p-1}},\cdots,\frac{|f_{\nu}(z)|^{p-2}f_{\nu}(z)}{\|f(z)\|_{p}^{p-1}}\right).\ee
Then $\xi(z)\in\partial\mathbb{B}_{\ell_{q}^{\nu}}$, where $\frac{1}{q}+\frac{1}{p}=1$.
It follows from (\ref{c-1.0h}), (\ref{c-1.1h}) and (\ref{c-1.2h}) that
$$\frac{\partial \|f(z)\|_{p}}{\partial
z}=\frac{1}{2}\left(\langle\partial f(z), \xi(z)\rangle+\overline{\langle\overline{\partial} f(z), \xi(z)\rangle}\right)$$
and
$$\frac{\partial \|f(z)\|_{p}}{\partial
\overline{z}}=\frac{1}{2}\left(\langle\overline{\partial} f(z), \xi(z)\rangle+\overline{\langle\partial f(z), \xi(z)\rangle}\right),$$
where $\partial f(z)=\left(\frac{\partial f_{1}(z)}{\partial z},\cdots,\frac{\partial f_{\nu}(z)}{\partial z}\right)$,
$\overline{\partial} f(z)=\left(\frac{\partial f_{1}(z)}{\partial \overline{z}},\cdots,\frac{\partial f_{\nu}(z)}{\partial \overline{z}}\right)$
and $\langle\cdot,\cdot\rangle$ is the Euclidean inner product on $\mathbb{C}^{n}$.
Hence we obtain the following inequality
\be\label{c-1.3h}\left|\frac{\partial \|f(z)\|_{p}}{\partial
z}\right|+\left|\frac{\partial \|f(z)\|_{p}}{\partial
\overline{z}}\right|\leq|\langle\partial f(z), \xi(z)\rangle|+|\langle\overline{\partial} f(z), \xi(z)\rangle|.\ee
Now we estimate $|\langle\partial f(z), \xi(z)\rangle|+|\langle\overline{\partial} f(z), \xi(z)\rangle|.$
For any fixed $\theta=(\theta_{1},\ldots,\theta_{\nu})\in\partial\mathbb{B}_{\ell_{q}^{\nu}}$, let $F_{\theta}(w)=\langle f(w), \theta\rangle,~w\in \mathbb{D}$.
Then by H\"older's inequality, we have

$$|F_{\theta}(w)|\leq|\langle f(w), \theta\rangle|\leq\left(\sum_{k=1}^{\nu}|f_{k}(w)|^{p}\right)^{\frac{1}{p}}
\left(\sum_{k=1}^{\nu}|\theta_{k}|^{q}\right)^{\frac{1}{q}}<1.$$
This implies that $F_{\theta}$ is a harmonic function of $\mathbb{D}$ into itself.
For any fixed $\theta\in\partial\mathbb{B}_{\ell_{q}^{\nu}}$,  an application of (\ref{Col-89}) to $F_{\theta}$ gives

\be\label{c-1.4h} \left|\frac{\partial F_{\theta}(z)}{\partial z}\right|+
\left|\frac{\partial F_{\theta}(z)}{\partial \overline{z}}\right|=
|\langle\partial f(z),\theta\rangle|+|\langle\overline{\partial} f(z), \theta\rangle|\leq\frac{4}{\pi}\frac{1}{1-|z|^{2}}.\ee

Combining (\ref{en-1}), (\ref{c-1.3h}) and (\ref{c-1.4h}), we conclude that

$$|\nabla\|f(z)\|_{p}|=\left|\frac{\partial \|f(z)\|_{p}}{\partial
z}\right|+\left|\frac{\partial \|f(z)\|_{p}}{\partial
\overline{z}}\right|\leq\frac{4}{\pi}\frac{1}{1-|z|^{2}}.$$

\bst\label{bst-01b}
Next, we estimate $|\nabla \|f(z)\|_{p}|$ for $z=x+iy \in\mathbb{D}\backslash\Omega$.
\est
%
By Lemma \ref{lem-Schwarz-def}, we see that
%
%
%
\beq\label{eq-gh-01}
|\nabla\|f(z)\|_{p}|
&=&\max_{\alpha\in[0,2\pi]}
\left\{\lim_{\rho\rightarrow0^{+}}\frac{\big|\|f(z+\rho e^{i\alpha})\|_{p}-\|f(z)\|_{p}\big|}{\rho}\right\}\\ \nonumber
&=&\max_{\alpha\in[0,2\pi]}\left\{\lim_{\rho\rightarrow0^{+}}\frac{\|f(z+\rho e^{i\alpha})-f(z)\|_{p}}{\rho}\right\}\\ \nonumber
&=&\max_{\alpha\in[0,2\pi]}\left(\sum_{k=1}^{\nu}\left|\frac{\partial f_{k}(z)}{\partial x}\cos\alpha+
\frac{\partial f_{k}(z)}{\partial y}\sin\alpha\right|^{p}\right)^{\frac{1}{p}}.
\eeq
For any fixed $\theta\in\partial\mathbb{B}_{\ell_{q}^{\nu}}$, let $F_{\theta}(w)=\langle f(w), \theta\rangle,~w\in\mathbb{D}$.
Then $F_{\theta}$ is a harmonic function of $\mathbb{D}$ into itself.
For all $\alpha\in [0,2 \pi]$, we have

\beqq
\lim_{\rho\rightarrow0^{+}}\frac{\left|F_{\theta}(z+\rho e^{i\alpha})-F_{\theta}(z)\right|}{\rho}
&=&
\left|\frac{\partial F_{\theta}(z)}{\partial x}\cos\alpha+\frac{\partial F_{\theta}(z)}{\partial y}\sin\alpha\right|\\
&=&
\left|e^{i\alpha}\frac{\partial F_{\theta}(z)}{\partial z}+
e^{-i\alpha}\frac{\partial F_{\theta}(z)}{\partial \overline{z}}\right|\\
&\leq&
\left|\frac{\partial F_{\theta}(z)}{\partial z}\right|+
\left|\frac{\partial F_{\theta}(z)}{\partial
\overline{z}}\right|
\eeqq
and
\beqq
\left|\sum_{k=1}^{\nu}\left(\frac{\partial f_{k}(z)}{\partial
x}\cos\alpha+ \frac{\partial f_{k}(z)}{\partial
y}\sin\alpha\right)\overline{\theta}_{k}\right|&=&
\lim_{\rho\rightarrow0^{+}}\left|\sum_{k=1}^{\nu}\frac{(f_{k}(z+\rho e^{i\alpha})-f_{k}(z))}{\rho}\overline{\theta}_{k}\right|\\
&=&\lim_{\rho\rightarrow0^{+}}\frac{\left|F_{\theta}(z+\rho e^{i\alpha})-F_{\theta}(z)\right|}{\rho},
\eeqq
which, together with (\ref{Col-89}), imply that
\be\label{eq-gh-02}\left|\sum_{k=1}^{\nu}\left(\frac{\partial
f_{k}(z)}{\partial x}\cos\alpha+ \frac{\partial f_{k}(z)}{\partial
y}\sin\alpha\right)\overline{\theta}_{k}\right|\leq\left|\frac{\partial F_{\theta}(z)}{\partial z}\right|+
\left|\frac{\partial F_{\theta}(z)}{\partial
\overline{z}}\right|\leq\frac{4}{\pi}\frac{1}{1-|z|^{2}}.
\ee
For $k\in\{1,2,\cdots,\nu\}$, let $\mu_{k}(z)=\frac{\partial
f_{k}(z)}{\partial x}\cos\alpha+\frac{\partial f_{k}(z)}{\partial
y}\sin\alpha$. Without loss of generality, we assume $\sum_{k=1}^{\nu}|\mu_{k}(z)|^{p}\neq0$. Then we let $$\theta_{k}:=\theta_{k}(\alpha)=
\frac{|\mu_{k}(z)|^{p-2}\mu_{k}(z)}
{\left(\sum_{k=1}^{\nu}|\mu_{k}(z)|^{p}\right)^{\frac{p-1}{p}}}.$$
It is not difficult  to know that $\theta(\alpha)=(\theta_1(\alpha),\dots, \theta_{\nu}(\alpha))\in \partial\mathbb{B}_{\ell_{q}^{\nu}}$.
From (\ref{eq-gh-01}) and (\ref{eq-gh-02}), we conclude that for
$z\in\mathbb{D}\backslash\Omega$,
\beqq|\nabla\|f(z)\|_{p}|&=&\max_{\alpha\in[0,2\pi]}\left(\sum_{k=1}^{\nu}\left|\frac{\partial
f_{k}(z)}{\partial x}\cos\alpha+ \frac{\partial f_{k}(z)}{\partial
y}\sin\alpha\right|^{p}\right)^{\frac{1}{p}}\\
&=&\max_{\alpha\in[0,2\pi]}\left|\sum_{k=1}^{\nu}\left(\frac{\partial
f_{k}(z)}{\partial x}\cos\alpha+ \frac{\partial f_{k}(z)}{\partial
y}\sin\alpha\right)\overline{\theta}_{k}(\alpha)\right|\\
&\leq&\frac{4}{\pi}\frac{1}{1-|z|^{2}}. \eeqq

\bca\label{cla-02}
$n\geq2$. \eca

We also split the proof of this case into two steps.
\bst\label{bst-03} We first estimate $|\nabla \|f(0)\|_{p}|$. \est

For any fixed $\theta\in \partial \mathbb{B}_{\ell_{2}^{n}}$,
let
\[
\Phi_{\theta}(\zeta)=f(\zeta \theta),
\quad
\zeta \in \mathbb{D}.
\]
Then $\Phi_{\theta}$ is a harmonic function of  $\mathbb{D}$ into $\mathbb{B}_{\ell_{p}^{\nu}}$.
By the case \ref{cla-01}, we have
\[
|\nabla \|\Phi_{\theta}(0)\|_{p}|\leq\frac{4}{\pi}.
\]
Since $\theta\in \partial \mathbb{B}_{\ell_{2}^{n}}$ is arbitrary,
in view of Lemma \ref{lem-Schwarz-def}, we have
\begin{equation}
\label{eq-ch-3.3g_0}
|\nabla \|f(0)\|_{p}|\leq\frac{4}{\pi}.
\end{equation}

\bst\label{bst-04} Next, we estimate $|\nabla \|f(z)\|_{p}|$ for $z\in \mathbb{B}_{\ell_{2}^{n}}\setminus \{ 0\}$.\est

In this case, by \cite[Theorem 2.2.2]{R2},
there exists an automorphism $\varphi_z$ of $\mathbb{B}_{\ell_{2}^{n}}$ such that
$\varphi_z(0)=z$ and
\[
D(\varphi_z^{-1})(z)(w)=
-\frac{\langle w,z\rangle z}{\| z\|_2^2(1-\| z\|_2^2)}-\frac{\| z\|_2^2w-\langle w,z\rangle z}{\| z\|_2^2\sqrt{1-\| z\|_2^2}},
\]
where $D(\varphi_z^{-1})(z)$ is the Fr\'{e}chet derivative of $\varphi_z^{-1}$ at $z$.
Since $f\circ \varphi_z$ is a pluriharmonic function of $\mathbb{B}_{\ell_{2}^{n}}$
into $\mathbb{B}_{\ell_{p}^{\nu}}$ and
\[
\| D(\varphi_z^{-1})(z)\|=\sup_{\| w\|_2=1}\| D(\varphi_z^{-1})(z)(w)\|_2=\frac{1}{1-\| z\|_2^2},
\]
by using (\ref{eq-ch-3.3g_0}),
we have
\beqq
|\nabla \|f(z)\|_{p}|&=&\limsup_{w\rightarrow z}\frac{|\|f(z)\|_{p}-\|f(w)\|_{p}|}{\|z-w\|_{2}}
\\
&=&
\limsup_{w\rightarrow z}
\frac{|\|f\circ \varphi_z(0)\|_{p}-\|f\circ \varphi_z(\varphi_z^{-1}(w))\|_{p}|}{\|\varphi_z^{-1}(w)\|_2}
\cdot
\frac{\|\varphi_z^{-1}(w)-\varphi_z^{-1}(z)\|_2}{\|z-w\|_{2}}
\\
&\leq &
|\nabla \|f\circ \varphi_z(0)\|_{p}|\cdot \| D(\varphi_z^{-1})(z)\|
\\
&\leq &
\frac{4}{\pi}\frac{1}{1-\|z\|_{2}^{2}}.
\eeqq

Therefore, combining the cases \ref{cla-01} and \ref{cla-02} yields the final estimate (\ref{eq-ch-3.3g}).

Next, we prove the sharpness part.
Let $a\in \mathbb{B}_{\ell_{2}^{n}}$ be arbitrarily fixed.
There exists a unitary transformation $U$ such that $Ua=(b_1, 0,\dots, 0)$ for some $b_1\in \mathbb{R}$ with $b_1 \in [0,1)$.
Obviously,
 \be\label{Sharp-1} \frac{1}{1-|b_1|^2}
=\frac{1}{1-\| a\|_2^2}.\ee
Let $f: \mathbb{B}_{\ell_{2}^{n}}\to \mathbb{B}_{\ell_{p}^{\nu}}$
be such that
$$\tilde{f}(z)=(f\circ U^{-1})(z)=\left((g\circ \phi_{-b_1})(z_1),0,\ldots,0\right)=(\tilde{f}_{1}(z_{1}),0,\ldots,0),$$
where $g(\zeta)=\frac{2}{\pi}\arctan\frac{i(\overline{\zeta}-\zeta)}{1-|\zeta|^{2}}$
and
$\phi_{-b_1}(\zeta)=\frac{-b_1+\zeta}{1-{b_1}\zeta}$
is a conformal automorphism of $\mathbb{D}$.
Since $f(a)=0$
and
\beqq
\limsup_{w\rightarrow a}\frac{|\|\tilde{f}(Uw)\|_{p}-\|\tilde{f}(Ua)\|_{p}|}{\|Uw-Ua\|_{2}}
&=&
\sup_{|\theta_1|=1}\limsup_{\rho\rightarrow 0^+}\frac{|\tilde{f}_1(b_1+\rho\theta_1)-\tilde{f}_1(b_1)|}{\rho}
\\
&=&\left|\frac{\partial \tilde{f}_{1}(b_1)}{\partial z_{1}}\right|+\left|\frac{\partial \tilde{f}_{1}(b_1)}{\partial \overline{z}_{1}}\right|
\\
&=&
\frac{4}{\pi}\frac{1}{1-|b_1|^2},
\eeqq
by (\ref{Sharp-1}),
we conclude that
\beqq
|\nabla \|f(a)\|_{p}|&=&\limsup_{w\rightarrow a}\frac{|\|f(w)\|_{p}-\|f(a)\|_{p}|}{\|w-a\|_{2}}
=
\limsup_{w\rightarrow a}\frac{|\|\tilde{f}(Uw)\|_{p}-\|\tilde{f}(Ua)\|_{p}|}{\|Uw-Ua\|_{2}}
\\
&=&\frac{4}{\pi}\frac{1}{1-\| a\|_2^2}.
\eeqq
The proof of this theorem is completed.
\qed

\subsection*{ The proof of Theorem \ref{thm-1}} Let $u=(u_{1},\ldots,u_{\nu})$ be a pluriharmonic function of $\mathbb{B}_{\ell_{2}^{n}}$
 into $\mathbf{B}_{\ell_{p}^{\nu}}$. We also divide the proof of (\ref{eq-th-1}) into two cases.
 \bca\label{cla-1}
$n=1$. \eca
We split the proof of this case into two steps.

 \bst\label{bst-1} We first estimate $\big|\nabla \|u(z)\|_{p}\big|$ for $z=x+iy\in\Omega:=\{\zeta\in\mathbb{D}:~u(\zeta)\neq0\}$.
 \est
As in the proof of Theorem \ref{thm-2+}, we have
\be\label{eq-1.1}
|\nabla \|u(z)\|_{p}|=
\left|\frac{\partial\|u(z)\|_{p}}{\partial z}\right|+\left|\frac{\partial\|u(z)\|_{p}}{\partial \overline{z}}\right|.
\ee

By calculations, we obtain
\beqq
\frac{\partial\|u(z)\|_{p}}{\partial z}&=&\sum_{j=1}^{\nu}\frac{|u_j(z)|^{p-2}u_{j}(z)}{\|u(z)\|_{p}^{p-1}}
\frac{\partial u_{j}(z)}{\partial z}
\\
&=&
\frac{1}{2}\left\langle\tau(z),\frac{\partial u(z)}{\partial x}\right\rangle-
\frac{i}{2}\left\langle\tau(z),\frac{\partial u(z)}{\partial y}\right\rangle
\eeqq
and
$$
\frac{\partial\|u(z)\|_{p}}{\partial \overline{z}}
=\frac{1}{2}\left\langle\tau(z),\frac{\partial u(z)}{\partial x}\right\rangle+
\frac{i}{2}\left\langle\tau(z),\frac{\partial u(z)}{\partial y}\right\rangle,
$$
which, together with (\ref{eq-1.1}), implies that
\be\label{eq-1.2}
|\nabla \|u(z)\|_{p}|=\sqrt{\left|\left\langle\frac{\partial u(z)}{\partial x},\tau(z)\right\rangle\right|^{2}+
\left|\left\langle\frac{\partial u(z)}{\partial y},\tau(z)\right\rangle\right|^{2}},
\ee
where
\[
\tau(z)=\left(\frac{|u_1(z)|^{p-2}u_{1}(z)}{\|u(z)\|_{p}^{p-1}},\dots,
\frac{|u_{\nu}(z)|^{p-2}u_{\nu}(z)}{\|u(z)\|_{p}^{p-1}}\right),
\]
$\frac{\partial u(z)}{\partial x}=\left(\frac{\partial u_{1}(z)}{\partial x},\ldots,\frac{\partial u_{\nu}(z)}{\partial x}\right)$ and
 $\frac{\partial u(z)}{\partial y}=\left(\frac{\partial u_{1}(z)}{\partial y},\ldots,\frac{\partial u_{\nu}(z)}{\partial y}\right).$
Note that $\tau(z)\in \partial \mathbf{B}_{\ell_{q}^{\nu}}$,
where $\frac{1}{q}+\frac{1}{p}=1$.
For any fixed $\theta\in\mathbf{B}_{\ell_{q}^{\nu}}$, let $U_{\theta}(\varsigma)=\langle u(\varsigma),\theta\rangle$, $\varsigma\in\mathbb{D}$.
Then we infer from  H\"older's inequality that $U_{\theta}$ is a real harmonic function of $\mathbb{D}$ into $(-1,1)$.
Hence, by (\ref{KV-2011}), we have
\beqq
\|\nabla U_{\theta}(\varsigma)\|_2=\sqrt{\left|\left\langle\frac{\partial u(\varsigma)}{\partial x},\theta\right\rangle\right|^{2}+
\left|\left\langle\frac{\partial u(\varsigma)}{\partial y},\theta\right\rangle\right|^{2}}
\leq\frac{4}{\pi}\frac{1-|U_{\theta}(\varsigma)|^{2}}{1-|\varsigma|^{2}},
\eeqq
which, together with (\ref{eq-1.2}), gives that
$$|\nabla \|u(z)\|_{p}|\leq\frac{4}{\pi}\frac{1-|U_{\tau(z)}(z)|^{2}}{1-|z|^{2}}
=\frac{4}{\pi}\frac{1-\|u(z)\|_{p}^{2}}{1-|z|^{2}}.$$

 \bst\label{bst-2} Next, we  estimate $\big|\nabla \|u(z)\|_{p}\big|$ for $z\in\mathbb{D}\setminus\Omega$.
 \est
%
This step follows from Theorem \ref{thm-2+}.

\bca\label{cla-2}
$n\geq2$. \eca
We also divide the proof of this case into two steps.
 \bst\label{bst-3} We first estimate $|\nabla \|u(0)\|_{p}|$.
 \est
For any fixed $\theta\in\partial\mathbb{B}_{\ell_{2}^{n}}$, let $\psi_{\theta}(\xi)=u(\xi\theta)$, $\xi\in\mathbb{D}$.
Obviously, $\psi_{\theta}$ is a harmonic function of $\mathbb{D}$ into $\mathbf{B}_{\ell_{p}^{\nu}}$. By Case \ref{cla-1},
we have
$$|\nabla \|\psi_{\theta}(0)\|_{p}|\leq\frac{4}{\pi}\left(1-\|\psi_{\theta}(0)\|_{p}^{2}\right)=
\frac{4}{\pi}\left(1-\|u(0)\|_{p}^{2}\right).$$ It follows from Lemma \ref{lem-Schwarz-def}
and the arbitrariness of $\theta$ that
\be\label{eq-1.3f}|\nabla \|u(0)\|_{p}|\leq
\frac{4}{\pi}\left(1-\|u(0)\|_{p}^{2}\right).\ee

 \bst\label{bst-4} Next, we estimate $|\nabla \|u(z)\|_{p}|$ for $z\in\mathbb{B}_{\ell_{2}^{n}}\setminus\{0\}$.
 \est

By using (\ref{eq-1.3f}) and  the similar reasoning as in the proof of Step \ref{bst-04} of Theorem \ref{thm-2+},
we obtain that

\beqq
|\nabla \|u(z)\|_{p}|\leq\frac{4}{\pi}\frac{1-\|u(z)\|_{p}^{2}}{1-\|z\|_{2}^{2}}.
\eeqq

Combining  Cases \ref{cla-1} and \ref{cla-2} yields the final estimate (\ref{eq-th-1}).

To show that the inequality of {\rm(\ref{eq-th-1})} is sharp,
let $a\in \mathbb{B}_{\ell_{2}^{n}}$ be arbitrarily fixed.
There exists a unitary transformation $U$ such that $Ua=(a_1^{\ast}, 0,\dots, 0)$ for some $a_1^{\ast}\in \mathbb{R}$ with $a_1^{\ast} \in [0,1)$.
Let $u: \mathbb{B}_{\ell_{2}^{n}}\to \mathbf{B}_{\ell_{p}^{\nu}}$
be a harmonic function such that
$$\tilde{u}(z):=(u\circ U^{-1})(z)=\left((g\circ \phi_{-a_1^{\ast}})(z_1),0,\ldots,0\right)=(\tilde{u}_{1}(z_{1}),0,\ldots,0),$$
where $g(\zeta)=\frac{2}{\pi}\arctan\frac{i(\overline{\zeta}-\zeta)}{1-|\zeta|^{2}}$
and
$\phi_{-a_1^{\ast}}(\zeta)=\frac{-a_1^{\ast}+\zeta}{1-{a_1^{\ast}}\zeta}$
is a conformal automorphism of $\mathbb{D}$.
Since $u(a)=0$, as in the proof of Theorem \ref{thm-2+},
we have
\[
|\nabla \|u(a)\|_{p}|=\frac{4}{\pi}\frac{1-\| u(a)\|_p^2}{1-\| a\|_2^2}.
\]
The proof of this theorem is complete.
\qed

\subsection*{ The proof of Theorem \ref{thm-3+}} Since (\ref{qq-ch1.01g}) easily follows from (\ref{qq-ch1.1g}), we
 only need to prove (\ref{qq-ch1.1g}).
Let $\beta=(\beta_{1},\ldots,\beta_{n})$ be a multi-index consisting of $n$ nonnegative integers $\beta_{k}$, where $k\in\{1,\ldots,n\}$.
For $z=(z_{1},\ldots,z_{n})\in\mathbb{B}_{\ell_{\infty}^{n}}$, we write $f$ in the following form
$$f(z)=\sum_{\beta}a_{\beta}z^{\beta}+\sum_{\beta}\overline{b}_{\beta}\overline{z}^{\beta},$$
where $a_{\beta}=(a_{1,\beta},\ldots,a_{\nu,\beta})$,  $b_{\beta}=(b_{1,\beta},\ldots,b_{\nu,\beta})$
and $f_{j}(z)=\sum_{\beta}a_{j,\beta}z^{\beta}+\sum_{\beta}\overline{b}_{j,\beta}\overline{z}^{\beta}$ for $j\in\{1,\ldots,\nu\}$.
For any
$\zeta=(\zeta_{1},\ldots,\zeta_{n})\in\mathbb{B}_{\ell_{\infty}^{n}},$
let
$\zeta\otimes\theta:=(\zeta_{1}e^{i\theta_{1}},\ldots,\zeta_{n}e^{i\theta_{n}})$,
where  $\theta_{k}\in[0,2\pi]$ for $k\in\{1,\ldots,n\}$.
Then

$$
\frac{1}{(2\pi)^{n}}\int_{0}^{2\pi}\cdots\int_{0}^{2\pi}\left\|f(\zeta\otimes\theta)\right\|^{2}_{2}d\theta_{1}\cdots
d\theta_{n}
 =\|f(0)\|^{2}_{2} + \sum_{|\beta|=1}^{\infty}\left(\|a_{\beta}\|^{2}_{2}+\|b_{\beta}\|^{2}_{2}\right)\big|\zeta^{\beta}\big|^{2}
 \leq1.
$$
By letting $\zeta\rightarrow\partial\mathbb{B}_{\ell_{\infty}^{n}}$, we get
\beqq
\sum_{|\beta|=1}\left(\|a_{\beta}\|^{2}_{2}+\|b_{\beta}\|^{2}_{2}\right)\leq
\sum_{|\beta|=1}^{\infty}\left(\|a_{\beta}\|^{2}_{2}+\|b_{\beta}\|^{2}_{2}\right)\leq1-\|f(0)\|^{2}_{2},
\eeqq
which implies that

\be\label{eq-h-gg}
\sum_{j=1}^{\nu}\sum_{k=1}^{n}\left(\left|\frac{\partial f_{j}(0)}{\partial z_{k}}\right|^{2}+
\left|\frac{\partial f_{j}(0)}{\partial \overline{z}_{k}}\right|^{2}\right)\leq 1-\|f(0)\|_{2}^{2}.
\ee
For any fixed $a=(a_{1},\ldots,a_{n})\in\mathbb{B}_{\ell_{\infty}^{n}}$,
let $$F(\zeta)=(F_{1}(\zeta),\ldots,F_{\nu}(\zeta))=f(\phi_{a}(\zeta)),$$ where
$\phi_{a}(\zeta)=(\phi_{a_{1}}(\zeta_{1}),\ldots,\phi_{a_{n}}(\zeta_{n}))$
is a holomorphic automorphism of $\mathbb{B}_{\ell_{\infty}^{n}}$ with  $\phi_{a_{k}}(\zeta_{k})=\frac{a_{k}+\zeta_{k}}{1+\overline{a}_{k}\zeta_{k}}$
 for
$k\in\{1,\ldots,n\}$.
Elementary calculations show that
$$\frac{\partial F_{j}(\zeta)}{\partial \zeta_{k}}=\frac{\partial f_{j}(\phi_{a}(\zeta))}{\partial w_{k}}\phi_{a_{k}}'(\zeta_{k})
=\frac{\partial f_{j}(\phi_{a}(\zeta))}{\partial w_{k}}\frac{1-|a_{k}|^{2}}{(1+\overline{a}_{k}\zeta_{k})^{2}}$$
and
$$\frac{\partial F_{j}(\zeta)}{\partial \overline{\zeta}_{k}}=\frac{\partial f_{j}(\phi_{a}(\zeta))}{\partial \overline{w}_{k}}\overline{\phi_{a_{k}}'(\zeta_{k})}
=\frac{\partial f_{j}(\phi_{a}(\zeta))}{\partial \overline{w}_{k}}\frac{1-|a_{k}|^{2}}{(1+a_{k}\overline{\zeta}_{k})^{2}},$$
which, together with (\ref{eq-h-gg}), imply
\beqq\label{eq-1.6g}
\sum_{j=1}^{\nu}\sum_{k=1}^{n}\left(\left|\frac{\partial f_{j}(a)}{\partial w_{k}}\right|^{2}+
\left|\frac{\partial f_{j}(a)}{\partial\overline{w}_{k}}\right|^{2}\right)(1-|a_{k}|^{2})^{2}
&=&\sum_{j=1}^{\nu}\sum_{k=1}^{n}\left(\left|\frac{\partial F_{j}(0)}{\partial \zeta_{k}}\right|^{2}+
\left|\frac{\partial F_{j}(0)}{\partial \overline{\zeta}_{k}}\right|^{2}\right)\\
&\leq&1-\|F(0)\|_{2}^{2}\\
&=&1-\|f(a)\|_{2}^{2},\eeqq
where $w_{k}=\phi_{a_{k}}(\zeta_{k}).$

Next, we prove the sharpness part of (\ref{qq-ch1.1g}).
Let $a\in \mathbb{B}_{\ell_{\infty}^{n}}$ be arbitrarily fixed.
For $z\in\mathbb{B}_{\ell_{\infty}^{n}}$, let $f(z)=(f_{1}(z),f_{2}(z),\ldots,f_{\nu}(z))\in\mathbb{B}_{\ell_{2}^{\nu}}$,
where $f_{1}(z)=\frac{-a_{1}+z_{1}}{1-\overline{a}_{1}z_{1}}$ and $f_{j}(z)\equiv0$ for $j\in\{2,\ldots,\nu\}$.
Then
\beqq
\sum_{j=1}^{\nu}\sum_{k=1}^{n}\left(\left|\frac{\partial f_{j}(a)}{\partial z_{k}}\right|^{2}+
\left|\frac{\partial f_{j}(a)}{\partial\overline{z}_{k}}\right|^{2}\right)(1-| a_{k}|^2)^2
=1-\| f(a)\|_2^2.\eeqq
The proof of this
theorem is completed.
\qed

\subsection*{ The proof of Proposition \ref{prop-1+}} For any fixed point $z\in\Omega$, we consider the function
$$F(\epsilon)=f(z+d(z)\epsilon)/M_{z},~\epsilon\in\mathbb{D},$$ where $d(z):=d_{\Omega}(z)$ and $M_{z}:=\sup\{\|f(\iota)\|_{2}:~|\iota-z|<d(z)\}$.
Then $F$ is a harmonic mapping of $\mathbb{D}$ into $\mathbb{B}_{\ell_{2}^{n}}$,
$$\partial F(\epsilon)=d(z)\partial f(z+d(z)\epsilon)/M_{z}$$ and $$\overline{\partial} F(\epsilon)=d(z)\overline{\partial} f(z+d(z)\epsilon)/M_{z}.$$
It follows from Theorem \ref{thm-3+} that
$$\left(\|\partial F(0)\|_{2}^{2}+\|\overline{\partial} F(0)\|_{2}^{2}\right)^{\frac{1}{2}}\leq\left(1-\| F(0)\|_{2}^{2}\right)^{\frac{1}{2}},$$
which implies
\be\label{eq-1.5h}d(z)\left(\|\partial f(z)\|_{2}^{2}+\|\overline{\partial}
f(z)\|_{2}^{2}\right)^{\frac{1}{2}}\leq \left(M_{z}^{2}-\|f(z)\|_{2}^{2}\right)^{\frac{1}{2}}.\ee
Let $\eta\in\mathbb{D}_{z}(d(z)):=\{\lambda\in\mathbb{C}:~|\lambda-z|<d(z)\}$.
It follows from the assumption $\|f\|_{2}^2\in\Lambda_{2}(\Omega)$ that there is a positive constant $C$ such that
\beqq
\|f(\eta)\|_{2}^{2}-\|f(z)\|_{2}^{2}
\leq C|\eta-z|^{2}
\leq C(d(z))^{2},
\eeqq
which yields
\be\label{eq-1.6h}
M_{z}^{2}-\|f(z)\|_{2}^{2}=\sup_{\eta\in\mathbb{D}(z,d(z))}\left(\|f(\eta)\|_{2}^{2}-\|f(z)\|_{2}^{2}\right)\leq
C(d(z))^{2}.\ee

Combining (\ref{eq-1.5h}) and (\ref{eq-1.6h}) gives the following estimate
\be\label{eq-1.7h}\left(\|\partial f(z)\|_{2}^{2}+\|\overline{\partial}
f(z)\|_{2}^{2}\right)^{\frac{1}{2}}\leq C^{\frac{1}{2}}.\ee
Finally, since $\Omega$ is linearly connected,
there exists a constant $L>0$ such that given any two points $z_{1},z_{2}\in\Omega$,
there exists a smooth curve
$\gamma\subset\Omega$ with the endpoints $z_{1}$ and $z_{2}$ such that $\ell(\gamma)\leq
L| z_1-z_2|$.
Then, by (\ref{eq-1.7h}) and the Cauchy-Schwarz inequality, we have

$$
\|f(z_{1})-f(z_{2})\|_{2}\leq\int_{\gamma}
\left(\|\partial f(z)\|_{2}+\|\overline{\partial} f(z)\|_{2}\right)|dz|
\leq(2C)^{\frac{1}{2}}L|z_{1}-z_{2}|.
$$ The proof of this proposition is completed.
\qed

\section{The coefficient type Schwarz-Pick  lemmas and their applications}\label{csw-sec3}

We begin this part with the following useful lemma.

\begin{lem}{\rm (\cite[Lemma 1]{CR})}\label{lem-CR}
Let $m$ be a positive integer and $\gamma$ be a real constant. Then
\[
\int_{0}^{2\pi}|\cos (m\theta+\gamma)|d\theta=4.
\]
\end{lem}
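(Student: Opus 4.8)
The plan is to reduce the integral to a computation over complete periods of $|\cos|$ by a linear change of variables, exploiting the fact that $|\cos|$ is $\pi$-periodic so that the phase $\gamma$ disappears once one integrates over a whole number of periods.

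First I would substitute $u = m\theta + \gamma$, so that $d\theta = du/m$ and, as $\theta$ runs over $[0,2\pi]$, the new variable $u$ runs over the interval $[\gamma,\, \gamma + 2m\pi]$ of length $2m\pi$. This turns the assertion into
\[
\int_{0}^{2\pi}|\cos(m\theta+\gamma)|\,d\theta
=\frac{1}{m}\int_{\gamma}^{\gamma+2m\pi}|\cos u|\,du .
\]

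Next I would invoke the periodicity of $|\cos u|$, whose period is $\pi$. Since the interval $[\gamma,\,\gamma+2m\pi]$ has length $2m\pi = 2m\cdot\pi$, it consists of exactly $2m$ consecutive periods of the integrand; because the integral of a periodic function over a whole number of periods does not depend on the starting point, one gets
\[
\int_{\gamma}^{\gamma+2m\pi}|\cos u|\,du = 2m\int_{0}^{\pi}|\cos u|\,du .
\]
The elementary evaluation $\int_{0}^{\pi}|\cos u|\,du = \int_{0}^{\pi/2}\cos u\,du + \int_{\pi/2}^{\pi}(-\cos u)\,du = 1+1 = 2$ then gives $\tfrac{1}{m}\cdot 2m\cdot 2 = 4$, as desired.

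The computation is entirely routine; the only point that merits a word of care is the independence of the answer from the phase $\gamma$, which is precisely the statement that integrating a $\pi$-periodic function over $2m$ full periods yields $2m$ times its one-period integral irrespective of where the interval begins. No genuine obstacle arises, and the factor $1/m$ introduced by the substitution is exactly cancelled by the $2m$ periods, which is why the final value is the constant $4$ independent of both $m$ and $\gamma$.
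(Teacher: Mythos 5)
Your proof is correct: the substitution $u=m\theta+\gamma$ together with the $\pi$-periodicity of $|\cos u|$ over the resulting interval of $2m$ full periods, each contributing $2$, yields exactly $4$. The paper itself states this lemma only as a citation to \cite[Lemma 1]{CR} and gives no proof, so there is nothing to compare against; your argument is the standard one and is complete.
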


\subsection*{ The proof of Theorem \ref{lem-3.1}}
We first prove (\ref{c-1.7s}). Let
$f=h+\overline{g}\in\mathscr{PH}(\mathbb{B}_{\ell_{p}^{n}})$ with
$\sup_{z\in\mathbb{B}_{\ell_{p}^{n}}}|f(z)|\leq 1,$  where
$h(z)=\sum_{\alpha}a_{\alpha}z^{\alpha}$ and
$g(z)=\sum_{\alpha}b_{\alpha}z^{\alpha}.$ For any
$$\xi=(\xi_{1},\ldots,\xi_{n})=(\rho_{1}e^{i\mu_{1}},\ldots,\rho_{n}e^{i\mu_{n}})\in\mathbb{B}_{\ell_{p}^{n}},$$
let
$\xi\otimes\theta:=(\xi_{1}e^{i\theta_{1}},\ldots,\xi_{n}e^{i\theta_{n}})$,
where $\mu_{k},\theta_{k}\in\mathbb{R}$ and $\rho_{k}=|\xi_{k}|$ for
all $k\in\{1,\ldots,n\}$. Then
$\xi\otimes\theta\in\mathbb{B}_{\ell_{p}^{n}}.$ Let $\alpha$ with
$|\alpha|\geq 1$ be fixed. Without loss of generality, we may assume
that $|a_{\alpha}||b_{\alpha}|\neq0$. By the orthogonality, we have

$$|a_{\alpha}|\left|\xi^{\alpha}\right|=\frac{1}{(2\pi)^{n}}\int_{0}^{2\pi}\cdots\int_{0}^{2\pi}e^{-i\left(\arg a_{\alpha} +\sum_{k=1}^{n}\alpha_{k}
(\mu_{k}+\theta_{k})\right)}f(\xi\otimes\theta)d\theta_{1}\cdots\,d\theta_{n}$$
and
$$|b_{\alpha}|\left|\xi^{\alpha}\right|=\frac{1}{(2\pi)^{n}}\int_{0}^{2\pi}\cdots\int_{0}^{2\pi}e^{i\left(\arg b_{\alpha} +\sum_{k=1}^{n}\alpha_{k}
(\mu_{k}+\theta_{k})\right)}f(\xi\otimes\theta)d\theta_{1}\cdots\,d\theta_{n},$$
which give that

\begin{equation}\label{eq-c-1.26}
\begin{split}
(|a_{\alpha}|+|b_{\alpha}|)\left|\xi^{\alpha}\right|&=\bigg|\frac{1}{(2\pi)^{n}}\int_{0}^{2\pi}\cdots\int_{0}^{2\pi}\Big(e^{-i\left(\arg
a_{\alpha} +\sum_{k=1}^{n}\alpha_{k} (\mu_{k}+\theta_{k})\right)}\\
&+e^{i\left(\arg b_{\alpha} +\sum_{k=1}^{n}\alpha_{k}
(\mu_{k}+\theta_{k})\right)}\Big)
f(\xi\otimes\theta)d\theta_{1}\cdots\,d\theta_{n}\bigg|\\
&\leq\frac{1}{(2\pi)^{n}}\int_{0}^{2\pi}\cdots\int_{0}^{2\pi}\Big|1+e^{i\left(\arg
a_{\alpha}+\arg b_{\alpha} +2\sum_{k=1}^{n}\alpha_{k}
(\mu_{k}+\theta_{k})\right)}\Big|\\
&\times|f(\xi\otimes\theta)|d\theta_{1}\cdots\,d\theta_{n}\\
&\leq\frac{2}{(2\pi)^{n}}\int_{0}^{2\pi}\cdots\int_{0}^{2\pi}\bigg|\cos\bigg(\sum_{k=1}^{n}\alpha_{k}
(\mu_{k}+\theta_{k})\\
&+\frac{\arg a_{\alpha}+\arg
b_{\alpha}}{2}\bigg)\bigg|d\theta_{1}\cdots\,d\theta_{n}.
\end{split}
\end{equation}
Since $|\alpha|\geq1$, without loss of generality, we may assume
that $\alpha_{1}\neq0$. It follows from Lemma \ref{lem-CR} that

\[
\int_{0}^{2\pi}\bigg|\cos\bigg(\sum_{k=1}^{n}\alpha_{k}
(\mu_{k}+\theta_{k})+\frac{\arg a_{\alpha}+\arg
b_{\alpha}}{2}\bigg)\bigg|d\theta_{1}=4,
\]
which, together
  (\ref{eq-c-1.26}), implies that


\be\label{chen-1-1} |a_{\alpha}|+|b_{\alpha}|\leq
\frac{4}{\pi}\inf_{\xi\in\mathbb{B}_{\ell_{p}^{n}}}\frac{1}{\left|\xi^{\alpha}\right|}.
\ee

From \cite[p.43]{Di}, we see that

\begin{equation}\label{eq-c-1.27t}
\sup_{\xi\in\mathbb{B}_{\ell_{p}^{n}}}|\xi^{\alpha}|=\left(\frac{\alpha^{\alpha}}{|\alpha|^{|\alpha|}}\right)^{1/p}.
\end{equation}

Therefore, combining (\ref{chen-1-1}) and (\ref{eq-c-1.27t}) yields the final estimates

$$|a_{\alpha}|+|b_{\alpha}|\leq
\frac{4}{\pi}\inf_{\xi\in\mathbb{B}_{\ell_{p}^{n}}}\frac{1}{\left|\xi^{\alpha}\right|}
=\frac{4}{\pi}\left(\frac{|\alpha|^{|\alpha|}}{\alpha^{\alpha}}\right)^{\frac{1}{p}}.$$




Next, we prove the sharpness part. For
$z\in\mathbb{B}_{\ell_{p}^{n}}$ and some $k\in\{1,\ldots,n\}$, let
$$f(z)=\frac{2}{\pi}\arg\left(\frac{1+z_{k}^{|\alpha|}}{1-z_{k}^{|\alpha|}}\right).$$
Then
$$f(z)=\frac{2}{i\pi}
\left(\sum_{j=1}^{\infty}\frac{1}{2j-1}z_{k}^{|\alpha|(2j-1)}-\sum_{j=1}^{\infty}\frac{1}{2j-1}\overline{z}_{k}^{|\alpha|(2j-1)}\right),$$
which implies that
$$|a_{(0,\ldots,\alpha_{k},0\ldots,0)}|+|b_{(0,\ldots,\alpha_{k},0\ldots,0)}|=\frac{4}{\pi},$$
where $|\alpha|\geq1$ and $\alpha_{k}=|\alpha|.$
The proof of this theorem is finished. \qed

\subsection*{ The proof of Theorem \ref{lem-0}}
We first prove (\ref{c-1.01}).
 For $z\in\mathbb{B}_{\ell_{p}^{n}}$, let
$h(z)=\sum_{\alpha}a_{\alpha}z^{\alpha}$ and
$g(z)=\sum_{\alpha}b_{\alpha}z^{\alpha}.$ Then, by the
orthogonality, we have
$$\sum_{|\alpha|=k}a_{\alpha}z^{\alpha}=\frac{1}{2\pi}\int_{0}^{2\pi}h(ze^{i\tau})e^{-ik\tau}d\tau$$
and
$$0=\frac{1}{2\pi}\int_{0}^{2\pi}\overline{h(ze^{i\tau})}e^{-ik\tau}d\tau,$$
which give that

\begin{equation}\label{eq-c-1.11}
\sum_{|\alpha|=k}a_{\alpha}z^{\alpha}=\frac{1}{\pi}\int_{0}^{2\pi}\mbox{Re}(h(ze^{i\tau}))e^{-ik\tau}d\tau,
\end{equation} where $k\geq1$. By a similar proof process of
(\ref{eq-c-1.11}), we get

\begin{equation}\label{eq-c-1.12}
\sum_{|\alpha|=k}b_{\alpha}z^{\alpha}=\frac{1}{\pi}\int_{0}^{2\pi}\mbox{Re}(g(ze^{i\tau}))e^{-ik\tau}d\tau.
\end{equation} It follows from (\ref{eq-c-1.11}) and
(\ref{eq-c-1.12}) that

\[ -\sum_{|\alpha|=k}(a_{\alpha}+b_{\alpha})z^{\alpha}=
\frac{1}{\pi}\int_{0}^{2\pi}\big(1-\mbox{Re}(f(ze^{i\tau}))\big)e^{-ik\tau}d\tau,
\] and consequently,
\begin{eqnarray*}
\left|\sum_{|\alpha|=k}(a_{\alpha}+b_{\alpha})z^{\alpha}\right|&\leq&
\frac{1}{\pi}\int_{0}^{2\pi}\left|1-\mbox{Re}(f(ze^{i\tau}))\right|d\tau=
\frac{1}{\pi}\int_{0}^{2\pi}\left(1-\mbox{Re}(f(ze^{i\tau}))\right)d\tau\\
&=&2\left(1-\mbox{Re}(f(0))\right).
 \end{eqnarray*}

Now we prove (\ref{c-1.7}). For any
$\xi=(\xi_{1},\ldots,\xi_{n})\in\mathbb{B}_{\ell_{p}^{n}}$, let
$\xi\otimes\theta:=(\xi_{1}e^{i\theta_{1}},\ldots,\xi_{n}e^{i\theta_{n}})$,
where $\theta_{k}\in[0,2\pi]$ for all $k\in\{1,\ldots,n\}$. Then
$\xi\otimes\theta\in\mathbb{B}_{\ell_{p}^{n}}.$ It follows from the
orthogonality that

$$a_{\alpha}\xi^{\alpha}=\frac{1}{(2\pi)^{n}}\int_{0}^{2\pi}\cdots\int_{0}^{2\pi}e^{-i\sum_{k=1}^{n}\alpha_{k}
\theta_{k}}h(\xi\otimes\theta)d\theta_{1}\cdots\,d\theta_{n}$$ and
$$\frac{1}{(2\pi)^{n}}\int_{0}^{2\pi}\cdots\int_{0}^{2\pi}e^{-i\sum_{k=1}^{n}\alpha_{k}
\theta_{k}}\overline{h(\xi\otimes\theta)}d\theta_{1}\cdots\,d\theta_{n}=0,$$
which yield that

\begin{equation}\label{c-1.8}
a_{\alpha}\xi^{\alpha}=\frac{1}{(2\pi)^{n}}\int_{0}^{2\pi}\cdots\int_{0}^{2\pi}e^{-i\sum_{k=1}^{n}\alpha_{k}
\theta_{k}}
\big(h(\xi\otimes\theta)+\overline{h(\xi\otimes\theta)}\big)d\theta_{1}\cdots\,d\theta_{n}.
\end{equation}
By using a similar reasoning as in the proof of (\ref{c-1.8}), we
obtain

\begin{equation}\label{c-1.9}
b_{\alpha}\xi^{\alpha}=\frac{1}{(2\pi)^{n}}\int_{0}^{2\pi}\cdots\int_{0}^{2\pi}e^{-i\sum_{k=1}^{n}\alpha_{k}
\theta_{k}}
\big(g(\xi\otimes\theta)+\overline{g(\xi\otimes\theta)}\big)d\theta_{1}\cdots\,d\theta_{n}.
\end{equation}


We infer from (\ref{c-1.8}) and  (\ref{c-1.9}) that

$$
-(a_{\alpha}+b_{\alpha})=\frac{1}{2^{n-1}\pi^{n}}\int_{0}^{2\pi}\cdots\int_{0}^{2\pi}\frac{e^{-i\sum_{k=1}^{n}\alpha_{k}
\theta_{k}}}{\xi^{\alpha}}
\big(1-\mbox{Re}(f(\xi\otimes\theta))\big)d\theta_{1}\cdots\,d\theta_{n},
$$ and consequently,

\begin{equation}\label{c-1.10h}
|a_{\alpha}+b_{\alpha}|\leq
\frac{1}{2^{n-1}\pi^{n}}\int_{0}^{2\pi}\cdots\int_{0}^{2\pi}
\frac{\big(1-\mbox{Re}(f(\xi\otimes\theta))\big)}{\left|\xi^{\alpha}\right|}d\theta_{1}\cdots\,d\theta_{n}
=\frac{2\big(1-\mbox{Re}(f(0))\big)}{\left|\xi^{\alpha}\right|}.
\end{equation}



\noindent Hence combining (\ref{eq-c-1.27t}) and (\ref{c-1.10h})
gives the final estimate
$$|a_{\alpha}+b_{\alpha}|\leq2\big(1-\mbox{Re}(f(0))\big)\inf_{\xi\in\mathbb{B}_{\ell_{p}^{n}}}\frac{1}{\left|\xi^{\alpha}\right|}=2\big(1-\mbox{Re}(f(0))\big)
\left(\frac{|\alpha|^{|\alpha|}}{\alpha^{\alpha}}\right)^{1/p}.$$


At last, we prove the sharpness part. For
$z=(z_{1},\ldots,z_{n})\in\mathbb{B}_{\ell_{p}^{n}}$ and some
$k\in\{1,\ldots,n\}$, let
$$f(z)=\frac{-2z_{k}}{1-z_{k}}=-2\sum_{j=1}^{\infty}z_{k}^{j}
~\left(\mbox{or}~\overline{f(z)}=\frac{-2\overline{z}_{k}}{1-\overline{z}_{k}}\right).$$
 Then $f(0)=0$, $\mbox{Re}(f)<1$ and the modulus of all nonzero
 coefficients
of $f$ is  $2$, which shows that the constant $2$ in {\rm
(\ref{c-1.01})} and (\ref{c-1.7}) can not be improved. The proof of
this theorem is completed.
\qed

\begin{lem}\label{lem-3.2}
Let $\alpha=(\alpha_{1},\ldots,\alpha_{n})$ and
$m=(m_{1},\ldots,m_{n})$ be  multi-indices satisfying
$|\alpha|\geq1$ and $ m_{k}\geq\alpha_{k}$ for all
$k\in\{1,\ldots,n\}$. Then
$$\frac{|\alpha|^{|\alpha|}}{\alpha^{\alpha}}\leq n^{|m|}.$$
Furthermore, the above equality holds if and only if $\alpha=m$ and
$m_{1}=\cdots=m_{n}$.
\end{lem}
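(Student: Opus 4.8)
The plan is to factor the claim into two separate inequalities,
\[
\frac{|\alpha|^{|\alpha|}}{\alpha^{\alpha}}\le n^{|\alpha|}\le n^{|m|}.
\]
The right-hand inequality is immediate: since $m_{k}\ge\alpha_{k}$ for every $k$ we have $|m|\ge|\alpha|$, and $n\ge1$ then gives $n^{|\alpha|}\le n^{|m|}$. Thus the entire content sits in the left-hand inequality, which does not involve $m$ at all and is purely a statement about the single multi-index $\alpha$.

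For that left-hand inequality I would set $s=|\alpha|$ and let $A=\{k:\alpha_{k}\ge1\}$ be the set of active indices, recalling that under the convention $0^{0}=1$ one has $\alpha^{\alpha}=\prod_{k}\alpha_{k}^{\alpha_{k}}=\prod_{k\in A}\alpha_{k}^{\alpha_{k}}$. I would then apply the weighted arithmetic--geometric mean inequality with weights $w_{k}=\alpha_{k}/s$ (which sum to $1$ over $A$) and values $x_{k}=s/\alpha_{k}$, giving
\[
\prod_{k\in A}\Bigl(\frac{s}{\alpha_{k}}\Bigr)^{\alpha_{k}/s}\le\sum_{k\in A}\frac{\alpha_{k}}{s}\cdot\frac{s}{\alpha_{k}}=|A|\le n.
\]
Raising both sides to the power $s$ turns the left-hand side into $s^{s}/\alpha^{\alpha}$, so $|\alpha|^{|\alpha|}/\alpha^{\alpha}\le|A|^{s}\le n^{s}=n^{|\alpha|}$, as required. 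Equivalently, one may invoke Jensen's inequality for the strictly convex function $\phi(x)=x\log x$ at the points $\alpha_{1},\dots,\alpha_{n}$, obtaining $\sum_{k}\alpha_{k}\log\alpha_{k}\ge s\log(s/n)$, which exponentiates to $\alpha^{\alpha}\ge(s/n)^{s}$.

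For the equality statement I would trace equality through both steps, assuming $n\ge2$ (the case $n=1$ being degenerate, since then both sides are identically $1$). Equality in the second inequality $n^{|\alpha|}\le n^{|m|}$ forces $|\alpha|=|m|$, which together with $m_{k}\ge\alpha_{k}$ yields $\alpha=m$. Equality in the AM--GM step forces all the values $x_{k}=s/\alpha_{k}$ with $k\in A$ to coincide, so $\alpha_{k}$ is constant on $A$, while equality in $|A|\le n$ forces every $\alpha_{k}\ge1$; hence $\alpha_{1}=\cdots=\alpha_{n}$. This gives the ``only if'' direction, and the ``if'' direction is a one-line check: if $\alpha=m$ with all $m_{k}=c\ge1$ equal, then $|\alpha|^{|\alpha|}/\alpha^{\alpha}=(nc)^{nc}/c^{nc}=n^{nc}=n^{|m|}$. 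The only delicate point, and the one I would watch most carefully, is the bookkeeping of vanishing components: the convention $0^{0}=1$ must be used consistently so that inactive indices drop out of $\alpha^{\alpha}$, and it is precisely the refinement $|A|\le n$ (rather than $|A|=n$) that makes both the bound and its equality case come out correctly.
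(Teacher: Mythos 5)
Your proof is correct, but it takes a genuinely different route from the paper's. You split the claim as $\frac{|\alpha|^{|\alpha|}}{\alpha^{\alpha}}\le n^{|\alpha|}\le n^{|m|}$, so that the entire dependence on $m$ is reduced to the trivial observation $|m|\ge|\alpha|$, and the real content is a single weighted AM--GM (equivalently, Jensen for $x\log x$) applied to $\alpha$ alone. The paper instead splits it as $\frac{|\alpha|^{|\alpha|}}{\alpha^{\alpha}}\le \frac{|m|^{|m|}}{m^{m}}\le n^{|m|}$: the first step compares $\alpha$ to $m$ term by term using the monotonicity of $t\mapsto(1+a/t)^{t}$ together with $|\alpha|-\alpha_{k}\le|m|-m_{k}$, and only then applies Jensen (with the regularization $\mu(x)=(x+\epsilon)\log(x+\epsilon)$ to handle zero components) to the multi-index $m$. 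The underlying convexity tool is the same, but your decomposition avoids the intermediate quantity $|m|^{|m|}/m^{m}$ and the monotonicity lemma entirely, which makes the argument shorter. Two further points in your favor: the paper's written proof stops at the inequality and never actually establishes the stated equality characterization, whereas you trace equality through each step; and you correctly flag that the ``only if'' direction fails for $n=1$ (where both sides are identically $1$ regardless of whether $\alpha=m$), a defect in the lemma as stated that the paper does not address. Your handling of vanishing components via $|A|\le n$ and the convention $0^{0}=1$ is also the right way to make the equality analysis airtight.
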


\begin{proof}
Since
 \begin{eqnarray*}
\left(\frac{|\alpha|}{\alpha_{k}}\right)^{\alpha_{k}}&=&\left(1+\frac{|\alpha|-\alpha_{k}}{\alpha_{k}}\right)^{\alpha_{k}}
\leq\left(1+\frac{|\alpha|-\alpha_{k}}{m_{k}}\right)^{m_{k}}\\&\leq&
\left(1+\frac{|m|-m_{k}}{m_{k}}\right)^{m_{k}}
=\left(\frac{|m|}{m_{k}}\right)^{m_{k}},
 \end{eqnarray*}
 in the case $\alpha_k\geq 1$,
we see that
\begin{equation}\label{gh-1}
\frac{|\alpha|^{|\alpha|}}{\alpha^{\alpha}}=\prod_{k=1}^{n}\left(\frac{|\alpha|}{\alpha_{k}}\right)^{\alpha_{k}}
\leq\prod_{k=1}^{n}\left(\frac{|m|}{m_{k}}\right)^{m_{k}}=\frac{|m|^{|m|}}{m^{m}}.
\end{equation}

Next, we show that
\[
\frac{|m|^{|m|}}{m^{m}}\leq n^{|m|}.
\]
{For any fixed $\epsilon>0$, let
$\mu(x)=(x+\epsilon)\log(x+\epsilon),~x\geq0.$ Then  $\mu$ is
strictly convex in $[0,\infty)$. It follows from Jensen's inequality
that
$$\frac{\sum_{k=1}^{n}\mu(m_{k})}{n}\geq\mu\left(\frac{\sum_{k=1}^{n}m_{k}}{n}\right).$$
Consequently,
$$\lim_{\epsilon\rightarrow0^{+}}\frac{\sum_{k=1}^{n}\mu(m_{k})}{n}\geq
\lim_{\epsilon\rightarrow0^{+}}\mu\left(\frac{\sum_{k=1}^{n}m_{k}}{n}\right),$$
which, together with (\ref{gh-1}), implies that
$$\frac{|\alpha|^{|\alpha|}}{\alpha^{\alpha}}\leq\frac{|m|^{|m|}}{m^{m}}\leq n^{|m|}.$$} The
proof of this lemma is finished.
\end{proof}

\subsection*{ The proof of Theorem \ref{thm-4}}
We first give a proof for (ii). It suffices to show for $z
\in\mathbb{B}_{\ell_{2}^{n}}\setminus \{ 0\}$. Let
$\xi=(\xi_{1},\ldots,\xi_{n})\in\mathbb{B}_{\ell_{2}^{n}}\setminus
\{ 0\}$ be fixed. Then there exists a unitary matrix $U_{\xi}$ such
that $U_{\xi }\xi^T=(\| \xi\|_2,0,\ldots, 0)^T$.
Let


\begin{equation*} P=\left(\begin{array}{cccc}
 1&0&\cdots&0\\
  0&s_{\xi}& \cdots &0\\
\vdots&& \cdots &\vdots \\
0&0& \cdots &s_{\xi}
\end{array}\right){U}_{\xi},
\end{equation*}
where $s_{\xi}=(1-\|\xi\|_{2}^{2})^{\frac{1}{2}}.$ Let
$$\varphi^{T}(z)=\frac{P(z^{T}-\xi^{T})}{1-\langle z,\xi\rangle}=\left(\frac{P_{1}(z^{T}-\xi^{T})}{1-\langle z,\xi\rangle},\ldots,
\frac{P_{n}(z^{T}-\xi^{T})}{1-\langle z,\xi\rangle}\right)^T,$$
where 
$P=(P_{1}^{T},\ldots,P_{n}^{T})^{T}$. Since
$\overline{U}_{\xi}^{T}P\xi^T=\xi^T$ and $
\overline{U}_{\xi}^{T}PQ_{\xi} =s_{\xi}Q_{\xi}, $
$\overline{U}_{\xi}^{T}\varphi^{T}(z)$ can be written as follows:
\[
\overline{U}_{\xi}^{T}\varphi^{T}(z)=-\frac{\xi^T-P_{\xi}(z)^T-s_{\xi}Q_{\xi}(z)^T}{1-\langle
z,\xi\rangle},
\]
where $P_{\xi}$ is the orthogonal projection of $\mathbb{C}^n$ onto
the subspace $[\xi]$ generated by $\xi$, and $Q_{\xi}=I-P_{\xi}$ is
the projection onto the orthogonal complement of $[\xi]$. According
to the representation of automorphism of $\mathbb{B}_{\ell_{2}^{n}}$
in \cite[Chapter 2]{R2} (or \cite{Di,LC}), we obtain that
$\varphi\in\mbox{Aut}(\mathbb{B}_{\ell_{2}^{n}})$,
 where $\mbox{Aut}(\mathbb{B}_{\ell_{2}^{n}})$ is the automorphism
group of $\mathbb{B}_{\ell_{2}^{n}}$.

Let $f$ be a pluriharmonic function of  $\mathbb{B}_{\ell_{2}^{n}}$ into $\mathbb{C}$ satisfying
$\sup_{z\in\mathbb{B}_{\ell_{2}^{n}}}|f(z)|\leq 1$. Then, by
 \cite{Vl}, we see that $f$ has a representation
$f=h+\overline{g}$, where $h$ and $g$ are holomorphic in
$\mathbb{B}_{\ell_{2}^{n}}$ with $g(0)=0$. Let
$$\mathcal{F}(z):=f(\varphi^{-1}(z))=H(z)+\overline{G(z)},$$
where
$$H(z):=h(\varphi^{-1}(z))=c_0+\sum_{k=1}^{\infty}\sum_{|\alpha|=k}c_{\alpha}z^{\alpha}$$
and
$$G(z):=g(\varphi^{-1}(z))=d_0+\sum_{k=1}^{\infty}\sum_{|\alpha|=k}d_{\alpha}z^{\alpha}.$$
Then
$$f(z)=\mathcal{F}(\varphi(z))=c_0+\sum_{k=1}^{\infty}\sum_{|\alpha|=k}c_{\alpha}u_{\alpha}(z)v_{\alpha}(z)
+\overline{d_0+\sum_{k=1}^{\infty}\sum_{|\alpha|=k}d_{\alpha}u_{\alpha}(z)v_{\alpha}(z)},$$
where
$u_{\alpha}(z)=\prod_{j=1}^{n}(P_{j}(z^{T}-\xi^{T}))^{\alpha_{j}}$
and $v_{\alpha}(z)=(1-\langle z,\xi\rangle)^{-|\alpha|}$. Then for
the multi-index $m=(m_{1},\ldots,m_{n})\neq 0$, we have

\begin{equation}\label{v-1}\frac{\partial^{|m|} f(z)}{\partial
z_{1}^{m_{1}}\cdots
\partial z_{n}^{m_{n}}}=\sum_{k=1}^{\infty}\sum_{|\alpha|=k}c_{\alpha}\frac{\partial^{|m|} (u_{\alpha}(z)v_{\alpha}(z))}{\partial
z_{1}^{m_{1}}\cdots
\partial z_{n}^{m_{n}}}
\end{equation}
and
\begin{equation}\label{v-2}
\frac{\partial^{|m|} f(z)}{\partial \overline{z}_{1}^{m_{1}}\cdots
\partial \overline{z}_{n}^{m_{n}}}=\overline{\sum_{k=1}^{\infty}\sum_{|\alpha|=k}d_{\alpha}\frac{\partial^{|m|} (u_{\alpha}(z)v_{\alpha}(z))}{\partial
z_{1}^{m_{1}}\cdots
\partial z_{n}^{m_{n}}}}.
\end{equation}
By elementary calculations, we see that for $|m|\geq|\alpha|$,

\begin{equation}\label{v-3}
\frac{\partial^{|m|} (u_{\alpha}(\xi)v_{\alpha}(\xi))}{\partial
z_{1}^{m_{1}}\cdots
\partial z_{n}^{m_{n}}}=\sum_{|\beta|=|\alpha|, m_j\geq \beta_j}\frac{\partial^{|\beta|} u_{\alpha}(\xi)}{\partial
z_{1}^{\beta_{1}}\cdots
\partial z_{n}^{\beta_{n}}}\frac{\partial^{|m|-|\beta|} v_{\alpha}(\xi)}{\partial
z_{1}^{m_{1}-\beta_{1}}\cdots
\partial z_{n}^{m_{n}-\beta_{n}}}\prod_{j=1}^{n}{m_{j}\choose \beta_{j}},
\end{equation}
where $\beta=(\beta_{1},\ldots,\beta_{n})$ is a multi-index.

$\mathbf{Claim~ 1.}$
\[ \left|\frac{\partial^{|\beta|}
u_{\alpha}(\xi)}{\partial z_{1}^{\beta_{1}}\cdots
\partial z_{n}^{\beta_{n}}}\right|
\leq |\alpha|!
\]
for all $\beta$ with $|\beta|=|\alpha|$.

Now we prove Claim 1. Set
$$P=(p_{jk})_{1\leq j,k\leq n}.$$
Then, we have
\[
u_{\alpha}(z)=
\prod_{k=1}^{n}\left(\sum_{j=1}^{n}p_{kj}(z_{j}-\xi_{j})\right)^{\alpha_{k}}.
\]
By calculating the partial derivative directly from the above
formula and using $|p_{jk}|\leq1~(1\leq j,k\leq n)$, we can prove
that
\[ \left|\frac{\partial^{|\beta|}
u_{\alpha}(\xi)}{\partial z_{1}^{\beta_{1}}\cdots
\partial z_{n}^{\beta_{n}}}\right|
\leq \left|\frac{\partial^{|\beta|}
\tilde{u}_{\alpha}(\xi)}{\partial z_{1}^{\beta_{1}}\cdots
\partial z_{n}^{\beta_{n}}}\right|,
\]
where
\[
\tilde{u}_{\alpha}(z)=
\prod_{k=1}^{n}\left(\sum_{j=1}^{n}(z_{j}-\xi_{j})\right)^{\alpha_{k}}
=\left(\sum_{j=1}^{n}(z_{j}-\xi_{j})\right)^{|\alpha|}.
\]
Since
\[
\frac{\partial^{|\beta|} \tilde{u}_{\alpha}(\xi)}{\partial
z_{1}^{\beta_{1}}\cdots
\partial z_{n}^{\beta_{n}}}=|\alpha|!,
\]
we obtain the desired result.

$\mathbf{Claim~ 2.}$ For $|\beta|=|\alpha|\geq 1$ and $m_j \geq
\beta_j$ for $j=1,\dots, n$,  \[ \left|\frac{\partial^{|m|-|\beta|}
v_{\alpha}(\xi)}{\partial z_{1}^{m_{1}-\beta_{1}}\cdots
\partial z_{n}^{m_{n}-\beta_{n}}}\right|\leq\frac{(|m|-1)!}{(|\alpha|-1)!}
\frac{\prod_{j=1}^{n}
|\xi_{j}|^{m_{j}-\beta_{j}}}{(1-\|\xi\|_{2}^{2})^{|m|}}.\]

Next, we prove Claim 2. Elementary computations show that
\[ \frac{\partial^{|m|-|\beta|} v_{\alpha}(z)}{\partial
z_{1}^{m_{1}-\beta_{1}}\cdots
\partial z_{n}^{m_{n}-\beta_{n}}}=|\alpha|(|\alpha|+1)\cdots(|m|-1)\frac{\prod_{j=1}^{n}
\overline{\xi}_{j}^{m_{j}-\beta_{j}}}{(1-\langle
z,\xi\rangle)^{|m|}}.\] Then replacing $z$ by $\xi$, implies that
Claim 2 is true. The proof of this claim is finished.

Let $$\partial^{m}\omega_{\alpha}(\xi):=\frac{\partial^{|m|}
\omega_{\alpha}(\xi)}{\partial z_{1}^{m_{1}}\cdots
\partial z_{n}^{m_{n}}},$$ where $\omega_{\alpha}=u_{\alpha}v_{\alpha}$. Then  combining
(\ref{v-3}), Claims 1 and 2 gives

 \begin{eqnarray*}\label{v-i4} \left|\partial^{m}\omega_{\alpha}(\xi)\right|&\leq&\sum_{|\beta|=|\alpha|, m_j\geq \beta_j}|\alpha|!
\frac{(|m|-1)!}{(|\alpha|-1)!(1-\|\xi\|_{2}^{2})^{|m|}}\prod_{j=1}^{n}|\xi_j|^{m_j-\beta_j}{m_{j}\choose
\beta_{j}}\\
&\leq& \sum_{|\beta|=|\alpha|, m_j\geq \beta_j}
\frac{|m|!}{(1-\|\xi\|_{2}^{2})^{|m|}}\prod_{j=1}^{n}|\xi_j|^{m_j-\beta_j}{m_{j}\choose
\beta_{j}},
\end{eqnarray*}
which, together with (\ref{v-1}), (\ref{v-2}),  Lemmas \ref{lem-3.1} and \ref{lem-3.2}, implies that
\beq\label{long-1}
&&\left|\frac{\partial^{|m|} f(\xi)}{\partial
z_{1}^{m_{1}}\cdots
\partial z_{n}^{m_{n}}}\right|+\left|\frac{\partial^{|m|} f(\xi)}{\partial
\overline{z}_{1}^{m_{1}}\cdots
\partial \overline{z}_{n}^{m_{n}}}\right|
\leq
\sum_{k=1}^{\infty}\sum_{|\alpha|=k}\big(|c_{\alpha}|+|d_{\alpha}|)\left|\partial^{m}\omega_{\alpha}(\xi)\right|\\ \nonumber
&\leq&\sum_{k=1}^{|m|}\sum_{|\alpha|=k}\frac{4}{\pi}\left(\frac{|\alpha|^{|\alpha|}}{\alpha^{\alpha}}\right)^{\frac{1}{2}}
\sum_{|\beta|=|\alpha|, m_j\geq \beta_j}
\frac{|m|!}{(1-\|\xi\|_{2}^{2})^{|m|}}\prod_{j=1}^{n}|\xi_j|^{m_j-\beta_j}{m_{j}\choose
\beta_{j}}\\ \nonumber
&\leq&\frac{4n^{\frac{|m|}{2}}}{\pi(1-\|\xi\|_{2}^{2})^{|m|}}
\Psi(\xi),
\eeq
where $$\Psi(\xi)=\sum_{k=1}^{|m|}\sum_{|\alpha|=k} \sum_{|\beta|=|\alpha|, m_j\geq
\beta_j} |m|!\prod_{j=1}^{n}|\xi_j|^{m_j-\beta_j}{m_{j}\choose
\beta_{j}}.$$
In the following, we begin to estimate $\Psi(\xi).$
Since the dimension of the space of $k$-homogeneous polynomials in $\mathbb{C}^n$ is ${n+k-1\choose
n-1}$,  we see that

\beq\label{long-2} \Psi(\xi)
&=&|m|!
\sum_{k=1}^{|m|} \sum_{|\beta|=k, m_j\geq
\beta_j}\prod_{j=1}^{n}|\xi_j|^{m_j-\beta_j}{m_{j}\choose
\beta_{j}}{n+k-1\choose
n-1}\\ \nonumber
&\leq&|m|!
\prod_{j=1}^{n}(1+|\xi_j|)^{m_j} {n+|m|-1\choose n-1}.
 \eeq

  Therefore, substituting (\ref{long-2}) into (\ref{long-1}) and replacing $\xi$ by $z$, we can get the desired
 result.

Next, we give a proof for (i). As in the proof for (ii), we have
\begin{eqnarray*} \left|\frac{\partial^{m} f(\xi)}{\partial
z^{m}}\right|+\left|\frac{\partial^{m} f(\xi)}{\partial
\overline{z}^{m}}\right| &\leq&
\sum_{\alpha=1}^{\infty}\big(|c_{\alpha}|+|d_{\alpha}|)\left|\partial^{m}\omega_{\alpha}(\xi)\right|\\
&\leq& \frac{4}{\pi(1-|\xi|^{2})^{m}} \sum_{\alpha=1}^{m} \alpha
(m-1)!|\xi|^{m-\alpha}{m\choose
\alpha}\\
&=& \frac{4}{\pi(1-|\xi|^{2})^{m}} m!(1+|\xi|)^{m-1}.
 \end{eqnarray*}

{Now we prove the sharpness part for $n=1$. For $z\in\mathbb{D}$,
 let
$$f_{m}(z)=\frac{2}{\pi}\arg\left(\frac{1+z^{m}}{1-z^{m}}\right)
=\frac{2}{i\pi} \left(\sum_{j=1}^{\infty} \frac{1}{2j-1}z^{(2j-1)m}-
\sum_{j=1}^{\infty}\frac{1}{2j-1}\overline{z}^{(2j-1)m}\right).$$
Then $f_m$ is harmonic on $\mathbb{D}$, $|f_m(z)|<1$ for $z\in
\mathbb{D}$ and
\[\left|\frac{\partial^{m} f_{m}(0)}{\partial
z^{m}}\right|+\left|\frac{\partial^{m} f_{m}(0)}{\partial
\overline{z}^{m}}\right|=\frac{4}{\pi}m!.\]} The proof of this
theorem is completed. \qed


\subsection*{ The proof of Theorem \ref{thm-2}}
From \cite{Vl}, we know that
$f\in\mathscr{PH}(\mathbb{B}_{\ell_{\infty}^{n}})$ has a
representation $f=h+\overline{g}$, where $h$ and $g$ are holomorphic
in $\mathbb{B}_{\ell_{\infty}^{n}}$ with $g(0)=0$. Let
$\alpha=(\alpha_{1},\ldots,\alpha_{n})$ be a multi-index. Then $f$
can be expressed as a power series in
$\mathbb{B}_{\ell_{\infty}^{n}}$ as follows:

$$f(z)=h(z)+\overline{g(z)}=\sum_{\alpha}a_{\alpha}z^{\alpha}+\sum_{\alpha}\overline{b}_{\alpha}\overline{z}^{\alpha}.$$
It follows from (\ref{c-1.7}) that, for all $|\alpha|\geq1$,

\begin{equation}\label{c-1.1}
 |a_{\alpha}+b_{\alpha}|\leq2\big(1-{\rm
Re}(f(0))\big). \end{equation}

For $k\in\{1,\ldots,n\}$ and
$z=(z_{1},\ldots,z_{n})\in\mathbb{B}_{\ell_{\infty}^{n}},$ let
$$\phi(\zeta)=(\phi_{1}(\zeta_{1}),\ldots,\phi_{n}(\zeta_{n})),$$
where
$\zeta=(\zeta_{1},\ldots,\zeta_{n})\in\mathbb{B}_{\ell_{\infty}^{n}}$
and
$$\phi_{k}(\zeta_{k})=\frac{z_{k}+\zeta_{k}}{1+\overline{z_{k}}\zeta_{k}}.$$
Then $f\circ\phi$ can be written in the following form:
$$\chi(\zeta)=f(\phi(\zeta))=H(\zeta)+\overline{G(\zeta)}=\sum_{\alpha}c_{\alpha}\zeta^{\alpha}+\sum_{\alpha}\overline{d}_{\alpha}\overline{\zeta}^{\alpha},$$
where $H=h\circ\phi$ and  $G=g\circ\phi$. By (\ref{c-1.1}), we see
that

\begin{equation}\label{c-1.4}|c_{\alpha}+d_{\alpha}|\leq
2\big(1-\mbox{Re}(\chi(0))\big)=2\big(1-\mbox{Re}(f(z))\big).
\end{equation}

Let $z\in\mathbb{B}_{\ell_{\infty}^{n}}$. We may assume that
$m_k\geq 1$ for $k\in\{1,\ldots,n\}$. Then as in the proof of
\cite[Theorem 1]{CR}, by using the Cauchy integral formula (cf.
\cite{R1}) and by changing the integral variable
$\eta_{k}=\phi_{k}(\zeta_{k})=\frac{z_{k}+\zeta_{k}}{1+\overline{z}_{k}\zeta_{k}}$,
we have


\begin{equation}\label{ch-pp-1}
\begin{split} &\left|\frac{\partial^{|m|} f(z)}{\partial
z_{1}^{m_{1}}\cdots
\partial z_{n}^{m_{n}}}+ \overline{\frac{\partial^{|m|} f(z)}{\partial
\overline{z}_{1}^{m_{1}}\cdots
\partial \overline{z}_{n}^{m_{n}}}}\right|\\
&\leq\frac{m!}{\prod_{k=1}^{n}(1-|z_{k}|^{2})^{m_{k}}}
\sum_{j_{1}=0}^{m_{1}-1}\cdots\sum_{j_{n}=0}^{m_{n}-1}{m_{1}-1\choose
j_{1}}\cdots{m_{n}-1\choose j_{n}}\\
&\times
\big(|c_{m_{1}-j_{1},\cdots,m_{n}-j_{n}}+d_{m_{1}-j_{1},\cdots,m_{n}-j_{n}}|\big)\prod_{k=1}^{n}|z_{k}|^{j_{k}}.
\end{split}
\end{equation}
Combining (\ref{c-1.4}) and (\ref{ch-pp-1}),
 we conclude that

\begin{eqnarray*}
 &&\left|\frac{\partial^{|m|} f(z)}{\partial
z_{1}^{m_{1}}\cdots
\partial z_{n}^{m_{n}}}+ \overline{\frac{\partial^{|m|} f(z)}{\partial
\overline{z}_{1}^{m_{1}}\cdots
\partial \overline{z}_{n}^{m_{n}}}}\right|\\
&\leq&2\big(1-\mbox{Re}(f(z))\big)\frac{m!}{\prod_{k=1}^{n}(1-|z_{k}|^{2})^{m_{k}}}
\sum_{j_{1}=0}^{m_{1}-1}\cdots\sum_{j_{n}=0}^{m_{n}-1}{m_{1}-1\choose
j_{1}}\cdots{m_{n}-1\choose
j_{n}}\prod_{k=1}^{n}|z_{k}|^{j_{k}}\\
&\leq&2\big(1-\mbox{Re}(f(z))\big)\frac{m!}{\prod_{k=1}^{n}(1-|z_{k}|^{2})^{m_{k}}}\prod_{k=1}^{n}(1+|z_{k}|)^{m_{k}-1}\\
&=&m!2\big(1-\mbox{Re}(f(z))\big)\prod_{k=1}^{n}\frac{(1+|z_{k}|)^{m_{k}-1}}{(1-|z_{k}|^{2})^{m_{k}}}
\leq
m!2\big(1-\mbox{Re}(f(z))\big)\frac{(1+\|z\|_{\infty})^{|m|-n}}{(1-\|z\|_{\infty}^{2})^{|m|}}.
\end{eqnarray*}
Now we show the sharpness part. For
$z=(z_{1},\ldots,z_{n})\in\mathbb{B}_{\ell_{\infty}^{n}}$ and some
$k\in\{1,\ldots,n\}$, let
$$f(z)=\frac{-2z_{k}}{1-z_{k}}=-2\sum_{j=1}^{\infty}z_{k}^{j}~\left(\mbox{or}~\overline{f(z)}=\frac{-2\overline{z}_{k}}{1-\overline{z}_{k}}\right).$$
 Then $f(0)=0$, $\mbox{Re}(f)<1$ and

 $$\left|\frac{\partial^{m_{k}} f(0)}{\partial z_{k}^{m_{k}}}\right|=2(m_{k}!)$$
which implies that the constant $2$ in {\rm (\ref{c-1.5})} is sharp.
The proof of this theorem is completed. \qed

\begin{lem}\label{lem-3.1ch}
For $p\in[1,\infty]$, let
$f(z)=\sum_{\alpha}a_{\alpha}z^{\alpha}+\sum_{\alpha}\overline{b}_{\alpha}\overline{z}^{\alpha}$ be a pluriharmonic function of $\mathbb{B}_{\ell_{p}^{n}}$
into $\mathbb{D}$.
 Then  $k\geq1$,
 \[
 \sup_{z\in\mathbb{B}_{\ell_{p}^{n}}}\left(\sum_{|\alpha|=k}(|a_{\alpha}|^{2}+|b_{\alpha}|^{2})|z^{\alpha}|^{2}\right)\leq\frac{16}{\pi^{2}}.
 \]
\end{lem}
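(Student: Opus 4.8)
The plan is to reduce the $n$-variable homogeneous sum to the one-variable coefficient estimate of Theorem \Ref{Coe-2} by slicing $f$ along complex lines through the origin and then averaging over the torus to recover the individual moduli $|\xi^{\alpha}|^{2}$. Fix $\xi=(\xi_{1},\dots,\xi_{n})\in\mathbb{B}_{\ell_{p}^{n}}$ and, for $\theta=(\theta_{1},\dots,\theta_{n})\in[0,2\pi]^{n}$, set $\xi\otimes\theta=(\xi_{1}e^{i\theta_{1}},\dots,\xi_{n}e^{i\theta_{n}})$. For each fixed $\theta$ the single-variable function $\Phi_{\theta}(\zeta)=f(\zeta(\xi\otimes\theta))$ is well defined for $\zeta\in\mathbb{D}$, because $\|\zeta(\xi\otimes\theta)\|_{p}=|\zeta|\,\|\xi\|_{p}<1$, and it is a harmonic function of $\mathbb{D}$ into $\mathbb{D}$ (harmonicity comes from pluriharmonicity of $f$ restricted to a complex line, and $|\Phi_{\theta}|<1$ from the hypothesis $f(\mathbb{B}_{\ell_{p}^{n}})\subset\mathbb{D}$). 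Writing $f=h+\overline{g}$ with $h(z)=\sum_{\alpha}a_{\alpha}z^{\alpha}$, $g(z)=\sum_{\alpha}b_{\alpha}z^{\alpha}$ and $g(0)=0$, and collecting terms by homogeneous degree, one gets $\Phi_{\theta}(\zeta)=\sum_{k\geq0}A_{k}(\theta)\zeta^{k}+\overline{\sum_{k\geq1}B_{k}(\theta)\zeta^{k}}$, where $A_{k}(\theta)=\sum_{|\alpha|=k}a_{\alpha}\xi^{\alpha}e^{i\sum_{j}\alpha_{j}\theta_{j}}$ and $B_{k}(\theta)=\sum_{|\alpha|=k}b_{\alpha}\xi^{\alpha}e^{i\sum_{j}\alpha_{j}\theta_{j}}$. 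Since $B_{0}(\theta)=g(0)=0$, the function $\Phi_{\theta}$ is exactly of the form to which Theorem \Ref{Coe-2} applies.

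Applying Theorem \Ref{Coe-2} to $\Phi_{\theta}$ yields, for every $k\geq1$ and every $\theta$,
\[
|A_{k}(\theta)|+|B_{k}(\theta)|\leq\frac{4}{\pi},
\]
whence, by the elementary inequality $s^{2}+t^{2}\leq(s+t)^{2}$ valid for $s,t\geq0$,
\[
|A_{k}(\theta)|^{2}+|B_{k}(\theta)|^{2}\leq\frac{16}{\pi^{2}},\qquad \theta\in[0,2\pi]^{n}.
\]

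The final step is to extract the diagonal sum by orthogonality on the torus. The characters $e^{i\sum_{j}\alpha_{j}\theta_{j}}$ indexed by multi-indices $\alpha$ with $|\alpha|=k$ are orthonormal with respect to the normalized measure $(2\pi)^{-n}d\theta_{1}\cdots d\theta_{n}$, so integrating the squared moduli gives the Parseval identity
\[
\frac{1}{(2\pi)^{n}}\int_{0}^{2\pi}\!\!\cdots\!\int_{0}^{2\pi}\big(|A_{k}(\theta)|^{2}+|B_{k}(\theta)|^{2}\big)\,d\theta_{1}\cdots d\theta_{n}=\sum_{|\alpha|=k}\big(|a_{\alpha}|^{2}+|b_{\alpha}|^{2}\big)|\xi^{\alpha}|^{2}.
\]
Combining this with the pointwise bound of the previous display and integrating over the torus yields $\sum_{|\alpha|=k}(|a_{\alpha}|^{2}+|b_{\alpha}|^{2})|\xi^{\alpha}|^{2}\leq 16/\pi^{2}$, and taking the supremum over $\xi\in\mathbb{B}_{\ell_{p}^{n}}$ finishes the proof.

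The one genuinely important point, and the step I expect to be the crux, is that one must average over the full torus rather than restrict $f$ to a single complex line. A bare slicing argument would only control $|A_{k}|^{2}+|B_{k}|^{2}=\big|\sum_{|\alpha|=k}a_{\alpha}\xi^{\alpha}\big|^{2}+\big|\sum_{|\alpha|=k}b_{\alpha}\xi^{\alpha}\big|^{2}$, the square of a sum, whereas the statement requires the sum of squares $\sum_{|\alpha|=k}(|a_{\alpha}|^{2}+|b_{\alpha}|^{2})|\xi^{\alpha}|^{2}$; it is precisely the orthogonality of the characters that converts the former into the latter while preserving the uniform bound $16/\pi^{2}$. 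The term-by-term rearrangements are justified by the absolute and locally uniform convergence of the power series of $h$ and $g$ on $\mathbb{B}_{\ell_{p}^{n}}$.
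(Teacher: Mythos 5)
Your proof is correct and follows essentially the same route as the paper's: a uniform $4/\pi$ bound on the degree-$k$ homogeneous piece followed by Parseval over the $n$-torus to convert the square of a sum into the sum of squares $\sum_{|\alpha|=k}(|a_{\alpha}|^{2}+|b_{\alpha}|^{2})|\xi^{\alpha}|^{2}$. The only difference is cosmetic: you obtain the $4/\pi$ bound by slicing along complex lines and invoking Theorem \Ref{Coe-2} to get $|A_{k}(\theta)|+|B_{k}(\theta)|\leq 4/\pi$, whereas the paper re-derives it for the combined quantity $A_{k}+\overline{B_{k}}$ directly from the radial orthogonality identity together with Lemma \ref{lem-CR}; both intermediate bounds feed into the same Parseval identity because the relevant characters are pairwise distinct for $k\geq 1$.
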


\bpf  For
$z\in\mathbb{B}_{\ell_{p}^{n}},$ $\vartheta\in[0,2\pi]$ and
$k\geq1$, it follows from the orthogonality that
$$\sum_{|\alpha|=k}a_{\alpha}z^{\alpha}=\frac{1}{2\pi}\int_{0}^{2\pi}f(e^{i\vartheta}z)e^{-ik\vartheta}\,d\vartheta$$
and
$$\sum_{|\alpha|=k}\overline{b}_{\alpha}\overline{z}^{\alpha}=\frac{1}{2\pi}\int_{0}^{2\pi}f(e^{i\vartheta}z)e^{ik\vartheta}\,d\vartheta,$$
which, together with  Lemma \ref{lem-CR}, implies that

\begin{equation}\label{jp-1-1}
\begin{split}
\left|\sum_{|\alpha|=k}a_{\alpha}z^{\alpha}+\sum_{|\alpha|=k}\overline{b}_{\alpha}\overline{z}^{\alpha}\right|
&\leq\frac{1}{2\pi}
\int_{0}^{2\pi}\left|f(e^{i\vartheta}z)\right|\left|e^{-ik\vartheta}+e^{ik\vartheta}\right|\,d\vartheta\\
 &\leq\frac{1}{\pi}\int_{0}^{2\pi}|\cos
k\vartheta|\,d\vartheta =\frac{4}{\pi}.
\end{split}
 \end{equation}
 Let
$z=(r_{1}e^{i\vartheta_{1}},\ldots,r_{n}e^{i\vartheta_{n}})\in\mathbb{B}_{\ell_{p}^{n}}$,
where $\vartheta_{j}\in[0,2\pi]$ and $r_{j}\geq0$ for all
$j\in\{1,\ldots,n\}$. Then, by (\ref{jp-1-1}), we have

\begin{eqnarray*}
\frac{16}{\pi^{2}}&\geq&\frac{1}{(2\pi)^{n}}\int_{0}^{2\pi}\cdots\int_{0}^{2\pi}\left|\sum_{|\alpha|=k}a_{\alpha}z^{\alpha}+
\sum_{|\alpha|=k}\overline{b}_{\alpha}\overline{z}^{\alpha}\right|^{2}d\vartheta_{1}\cdots\,d\vartheta_{n}\\
&=&\sum_{|\alpha|=k}(|a_{\alpha}|^{2}+|b_{\alpha}|^{2})|z^{\alpha}|^{2},
\end{eqnarray*}  which completes the proof.
\epf

\subsection*{ The proof of Theorem \ref{thm-2.1x}}
 We first  prove the left hand side of the
inequality (\ref{eq-T-1}). Let
$f(z)=\sum_{\alpha}a_{\alpha}z^{\alpha}+\sum_{\alpha}\overline{b}_{\alpha}\overline{z}^{\alpha}\in\mathscr{PH}(\mathbb{B}_{\ell_{p}^{n}})$
with $b_0=0$ and $\sup_{z\in\mathbb{B}_{\ell_{p}^{n}}}|f(z)|\leq 1$.
We split the remaining proof into two cases.

\bca
$p\in[2,\infty]$.
\eca

For $\|z\|_{p}\leq \rho_{n}:=\pi/((\pi+4\sqrt{2})\sqrt{n})$ and
$\zeta=z/\rho_{n}$, it follows from $\sum_{|\alpha|=k}1\leq n^{k}$
and the Cauchy-Schwarz inequality that

\begin{eqnarray*} \sum_{|\alpha|=k}(|a_{\alpha}|+|b_{\alpha}|)|z^{\alpha}|&\leq&
\left(\sum_{|\alpha|=k}(|a_{\alpha}|+|b_{\alpha}|)^{2}|\zeta^{\alpha}|^{2}\right)^{1/2}\left(\sum_{|\alpha|=k}\rho_{n}^{2k}\right)^{1/2}
\\&\leq&\sqrt{2}\left(\sum_{|\alpha|=k}(|a_{\alpha}|^{2}+|b_{\alpha}|^{2})\left|\zeta^{\alpha}\right|^{2}\right)^{1/2}n^{\frac{k}{2}}\rho_{n}^{k},
\end{eqnarray*} which, together with
Lemma \ref{lem-3.1ch}, yields that

\begin{equation}\label{f-b-1}\sum_{|\alpha|=k}(|a_{\alpha}|+|b_{\alpha}|)|z^{\alpha}|\leq\frac{4\sqrt{2}}{\pi}n^{\frac{k}{2}}\rho_{n}^{k}=\frac{4\sqrt{2}}{\pi}\frac{1}{\left(1+\frac{4\sqrt{2}}{\pi}\right)^{k}}.
\end{equation} Consequently, by (\ref{f-b-1}), we have

 \[ \sum_{k=1}^{\infty}\sum_{|\alpha|=k}(|a_{\alpha}|+|b_{\alpha}|)|z^{\alpha}|\leq\frac{4\sqrt{2}}{\pi}\sum_{k=1}^{\infty}\frac{1}{\left(1+\frac{4\sqrt{2}}{\pi}\right)^{k}}
 =1.
 \]

 Therefore,
$\mathcal{R}^{\ast}_{P}(\mathbb{B}_{\ell_{p}^{n}})\geq\pi/((\pi+4\sqrt{2})\sqrt{n}).$

\bca
$p\in[1,2)$.
\eca

By Theorem \ref{lem-3.1}, we have

\begin{equation}\label{eq-yhh-1}
\begin{split}
\sum_{|\alpha|=k}(|a_{\alpha}|+|b_{\alpha}|)|z^{\alpha}|&\leq\frac{4}{\pi}\sum_{|\alpha|=k}\left(\frac{|\alpha|^{|\alpha|}}{\alpha^{\alpha}}\right)^{\frac{1}{p}}|z^{\alpha}|
\leq
\frac{4}{\pi}\sum_{|\alpha|=k}\frac{|\alpha|^{|\alpha|}}{\alpha^{\alpha}}|z^{\alpha}|\\
&=\frac{4}{\pi}\sum_{|\alpha|=k}\frac{k^{k}}{\alpha^{\alpha}\cdot\frac{k!}{\alpha!}}\cdot\frac{k!}{\alpha!}|z^{\alpha}|.
\end{split}
\end{equation}

By applying H\"older's inequality in the case $p\in (1,2)$, we see
that
\begin{equation}\label{uk-1}\sum_{|\alpha|=k}\frac{k!}{\alpha!}|z^{\alpha}|=\|z\|_{1}^{k}\leq
n^{k\left(1-\frac{1}{p}\right)}\|z\|_{p}^{k}.
\end{equation}
Combining (\ref{eq-yhh-1}) and (\ref{uk-1}), and using
$\alpha^{\alpha}/(\alpha!)\geq1$, we have

\[\sum_{k=1}^{\infty}\sum_{|\alpha|=k}(|a_{\alpha}|+|b_{\alpha}|)|z^{\alpha}|\leq\frac{4}{\pi}\sum_{k=1}^{\infty}
\frac{k^{k}}{k!}\left(n^{1-\frac{1}{p}}\|z\|_{p}\right)^{k}.\]
 Hence
$\mathcal{R}^{\ast}_{P}(\mathbb{B}_{\ell_{p}^{n}})\geq
x_{0}/n^{1-1/p}$, where $x_{0}$ is the unique positive solution to
the following equation

\begin{equation}\label{eq-yhh-3}
\sum_{k=1}^{\infty}\frac{k^{k}}{k!}x^{k}=\frac{\pi}{4}.
\end{equation}
By \cite[p. 328]{B-2000}, we have
\[\sum_{k=1}^{\infty}\frac{k^{k}}{k!}\left(\frac{1}{3\sqrt[3]{e}}\right)^{k}=\frac{1}{2},\]
which implies that the unique positive solution $x_{0}$ to the
equation (\ref{eq-yhh-3}) is  bigger than $1/(3\sqrt[3]{e})$.

Next, we  prove the right hand side of the
inequality (\ref{eq-T-1}).
By
the Kahane-Salem-Zygmund inequality (see \cite[Corollary]{B-2000} or
\cite{BK}), for $n\geq 2$ and $k\geq 2$, there exist coefficients
$(c_{\alpha})_{|\alpha|=k}$ with $|c_{\alpha}|=k!/\alpha!$ for all
$\alpha$ such that

\[
\sup_{z\in\mathbb{B}_{\ell_{p}^{n}}}\left|\sum_{|\alpha|=k}c_{\alpha}z^{\alpha}\right|\leq\sqrt{32k\log
(6k)}n^{\frac{1}{2}+\left(\frac{1}{2}-\frac{1}{\max\{2,p\}}\right)k}(k!)^{1-\frac{1}{\min\{p,2\}}}.
\]
Since
\[\label{rrt-1}\sum_{|\alpha|=k}\frac{k!}{\alpha!}=n^{k},
\] we
see from the definition of
$\mathcal{R}^{\ast}_{P}(\mathbb{B}_{\ell_{p}^{n}})$ that
\begin{eqnarray*}\left(\frac{\mathcal{R}^{\ast}_{P}(\mathbb{B}_{\ell_{p}^{n}})}{n^{1/p}}\right)^{k}n^{k}
&=&
\sum_{|\alpha|=k}|c_{\alpha}|\left(\frac{\mathcal{R}^{\ast}_{P}(\mathbb{B}_{\ell_{p}^{n}})}{n^{1/p}}\right)^{k}
\\
&\leq& \sqrt{32k\log
(6k)}n^{\frac{1}{2}+\left(\frac{1}{2}-\frac{1}{\max\{2,p\}}\right)k}(k!)^{1-\frac{1}{\min\{p,2\}}}.
\end{eqnarray*} Consequently,
\[\mathcal{R}^{\ast}_{P}(\mathbb{B}_{\ell_{p}^{n}})\leq\left(\frac{(k!)^{\frac{1}{k}}}{n}\right)^{1-\frac{1}{\min\{p,2\}}}\big(32kn\log
(6k)\big)^{\frac{1}{2k}}.\] Choosing $k$ to be an integer close to
$\log n$, we  get the desired result.
The
proof of this theorem is completed.
\qed

Let $$P(z)=\sum_{|\alpha|=k}a_{\alpha}z^{\alpha}$$ be a
$k$-homogeneous polynomial in $\mathbb{B}_{\ell_{\infty}^{n}}$.
Using polarization, Bohnenblust and Hille
\cite{BH-1931} obtained the following inequality for $k$-homogeneous
polynomials on $(\mathbb{C}^{n},
\|\cdot\|_{\infty})$: for any $k\geq1$, there exists a
constant $D_{k}\geq1$ such that, for any complex $k$-homogeneous
polynomial $P(z)=\sum_{|\alpha|=k}a_{\alpha}z^{\alpha}$ on
$(\mathbb{C}^{n},
\|\cdot\|_{\infty})$, we have
\begin{equation}\label{e-BH}
\left(\sum_{|\alpha|=k}|a_{\alpha}|^{\frac{2k}{k+1}}\right)^{\frac{k+1}{2k}}\leq
D_{k}\|P\|_{\infty},
\end{equation} where
$\|P\|_{\infty}=\sup_{z\in\mathbb{B}_{\ell_{\infty}^{n}}}|P(z)|$. In
the following, the best constant $D_{k}$ in (\ref{e-BH}) will be
denoted by ${\rm B^{pol}_{\mathbb{C},k}}$ (see \cite{BPS}).
Recently,  Bayart, Pellegrino and Seoane-Sep\'ulveda  (\cite[Theorem
1.1]{BPS}) obtained the following estimate: For
any $\varepsilon>0$, there exists $\kappa>0$ such that, for any
$k\geq1$,

\begin{equation}\label{eq-c-1.14}
{\rm
B^{pol}_{\mathbb{C},k}}\leq\kappa(1+\varepsilon)^{k}.
\end{equation}


A Schauder basis $(x_{n})$ of a Banach space $X$ is said to be
unconditional if there is a constant $C\geq0$ such that
$\|\sum_{k=1}^{n}\epsilon_{k}\beta_{k}x_{k}\|\leq
C\|\sum_{k=1}^{n}\beta_{k}x_{k}\|$ for all $n$ and
$\beta_{1},~\ldots,~\beta_{n},$
$\epsilon_{1},~\ldots,~\epsilon_{n}\in\mathbb{C}$ with
$|\epsilon_{k}|\leq1$. In particular, the best constant $C$ is
called the unconditional basis constant of $(x_{n})$. If
$X=(\mathbb{C}^{n},\|\cdot\|)$ is a Banach space and
$k\in\{1,2,\ldots\}$, then $\mathscr{P}(^{k}X)$ stands for the
Banach space of all $k$-homogeneous polynomials
$P(z)=\sum_{|\alpha|=k}c_{\alpha}z^{\alpha},$ $z\in\mathbb{C}^{n}$,
together with the norm
$\|P\|_{\mathscr{P}(^{k}X)}:=\sup_{\|z\|\leq1}|P(z)|$. The
unconditional basis constant of all monomials $z^{\alpha}$ with
$|\alpha|=k$ is denoted by $\chi_{{\rm mon}}(\mathscr{P}(^{k}X))$.
For more details on this topic, we refer to \cite{DF-11}.

\subsection*{ The proof of Theorem \ref{thm-2.0x}} We divide the proof of
this theorem into three cases.

\bca $n=1$.
\eca

 Let
$f(z)=\sum_{j=0}^{\infty}a_{j}z^j+\sum_{j=0}^{\infty}\overline{b}_{j}
\overline{z}^j \in\mathscr{PH}_{+}(\mathbb{B}_{\ell_{p}^{1}})$ with
$b_0=0$. For $z\in\mathbb{B}_{\ell_{p}^{1}}$, by (\ref{c-1.01}), we
have
\[\sum_{j=0}^{\infty}|(a_{j}+b_{j})z^{j}|=
f(0)+\sum_{j=1}^{\infty}|(a_{j}+b_{j})z^{j}|\leq
f(0)+2(1-f(0))\frac{|z|}{1-|z|},\] and consequently,

\[\sum_{j=0}^{\infty}\left|(a_{j}+b_{j})\frac{1}{3^{j}}\right|\leq1.
\]

Next, we prove the sharpness part. For
$z\in\mathbb{B}_{\ell_{p}^{1}}$, let
$$f(z)=\frac{-2z}{1-z}=-2\sum_{k=1}^{\infty}z^{k}~\left(\mbox{or}~\overline{f(z)}=\frac{-2\overline{z}}{1-\overline{z}}\right).$$
It is not difficult to know that $\mbox{Re}(f)\leq1$ and
$$M(|z|):=\sum_{k=1}^{\infty}2|z|^{k}=\frac{2|z|}{1-|z|},$$
which gives that $M(\frac{1}{3})=1$.

\bca
$n\geq2~\mbox{and}~p\in[1,\infty)$.
\eca

\bst We first prove that there is an absolute
constant $C$ such that
$$\mathcal{R}_{P}(\mathbb{B}_{\ell_{p}^{n}})\leq C \left(\frac{\log
n}{n}\right)^{1-\frac{1}{\min\{p,2\}}}.$$
\est

This step easily follows from the proof of Theorem \ref{thm-2.1x}
by replacing 
$\mathcal{R}^{\ast}_{P}(\mathbb{B}_{\ell_{p}^{n}})$
by
$\mathcal{R}_{P}(\mathbb{B}_{\ell_{p}^{n}})$.





\bst
Next, we prove that there is an absolute
constant $C$ such that
$$\mathcal{R}_{P}(\mathbb{B}_{\ell_{p}^{n}})\geq \frac{1}{C} \left(\frac{\log n}{n}\right)^{1-\frac{1}{\min\{p,2\}}}.$$
\est

Let $f(z)=\sum_{\alpha}a_{\alpha}z^{\alpha}
+\sum_{\alpha}\overline{b}_{\alpha}\overline{z}^{\alpha}
\in\mathscr{PH}_{+}(\mathbb{B}_{\ell_{p}^{n}})$ with $b_0=0$. By
Theorem \ref{lem-0}, for all $k\in\{1,2,\ldots\}$, we have
\[
\sum_{|\alpha|=k}|a_{\alpha}+b_{\alpha}|\left|z^{\alpha}\right| \leq
2(1-{\rm Re} (f(0))) \chi_{{\rm
mon}}(\mathcal{P}(^k\ell_p^n))\|z\|_p^k.
\]
Since there exists an absolute constant $C>0$ such that (see
\cite[p.144]{DF-11}) \[ \chi_{{\rm
mon}}(\mathscr{P}(^{k}\ell_{p}^{n}))\leq
C^{k}\left(1+\frac{n}{k}\right)^{(k-1)\left(1-\frac{1}{\min\{p,2\}}\right)},\]
we obtain
\[
\sum_{\alpha} |a_{\alpha}+b_{\alpha}|\left|z^{\alpha}\right|\leq
f(0)+2\big(1-f(0)\big)\sum_{k=1}^{\infty}C^k
\left(1+\frac{n}{k}\right)^{(k-1)\left(1-\frac{1}{\min\{p,2\}}\right)}\|z\|_p^k.
\]
At last, by using a similar proof process of \cite[Theorem
1.1]{DF-11}, we obtain the desired result.

\bca
$n\geq2$ and $p=\infty$.
\eca

We divide the proof of
this situation into two parts (see Claims 1 and 2).

$\mathbf{Claim~ 1.}$
$\limsup_{n\rightarrow\infty}\frac{\mathcal{R}_{P}(\mathbb{B}_{\ell_{\infty}^{n}})}{\sqrt{\frac{\log
n}{n}}}\geq1$.

 Let
$f(z)=\sum_{\alpha}a_{\alpha}z^{\alpha}
+\sum_{\alpha}\overline{b}_{\alpha}\overline{z}^{\alpha}
\in\mathscr{PH}_{+}(\mathbb{B}_{\ell_{\infty}^{n}})$ with $b_0=0$.
Elementary computations show that, for
$z\in\mathbb{B}_{\ell_{\infty}^{n}}$,

\begin{equation}\label{c-1.2}
\sum_{\alpha}|(a_{\alpha}+b_{\alpha})z^{\alpha}|\leq
f(0)+\sum_{k=1}^{\infty}\|z\|_{\infty}^{k}\sum_{|\alpha|=k}|a_{\alpha}+b_{\alpha}|.
\end{equation} By (\ref{eq-c-1.14}) and
(\ref{c-1.01}), for any $\varepsilon>0$, there exists $\kappa>0$
such that, for any $k\geq1$,

\begin{equation}\label{eq-c-1.18}
\begin{split}
\left(\sum_{|\alpha|=k}\left(|a_{\alpha}+b_{\alpha}|^{\frac{2k}{k+1}}\right)\right)^{\frac{k+1}{2k}}
&\leq\kappa(1+\varepsilon)^{k}\sup_{z\in\mathbb{B}_{\ell_{\infty}^{n}}}\left|\sum_{|\alpha|=k}(a_{\alpha}+b_{\alpha})z^{\alpha}\right|
\\
&\leq 2\big(1-f(0)\big)\kappa(1+\varepsilon)^{k}.
\end{split}
\end{equation}

Since the dimension of the space of $k$-homogeneous polynomials in
$\mathbb{C}^{n}$ is ${n+k-1\choose k}$, an application of
(\ref{c-1.01}), (\ref{eq-c-1.18}) and H\"older's inequality in the
case $k\geq 2$ to the sum $\sum_{|\alpha|=k}|a_{\alpha}+b_{\alpha}|$
gives

\begin{equation}\label{c-1.3}
\begin{split}
\sum_{|\alpha|=k}|a_{\alpha}+b_{\alpha}| &\leq{n+k-1\choose
k}^{\frac{k-1}{2k}}\left(\sum_{|\alpha|=k}\left(|a_{\alpha}+b_{\alpha}|^{\frac{2k}{k+1}}\right)\right)^{\frac{k+1}{2k}}
\\
&\leq2\big(1-f(0)\big)\kappa(1+\varepsilon)^{k}{n+k-1\choose
k}^{\frac{k-1}{2k}}.
\end{split}
\end{equation}
It follows from (\ref{Num-1}) that
$${n+k-1\choose k}\leq e^{k}\left(1+\frac{n}{k}\right)^{k},$$
which, together with (\ref{c-1.2}) and  (\ref{c-1.3}), implies  that

\begin{equation}\label{c-1.20}
\begin{split}
\sum_{\alpha}|(a_{\alpha}+b_{\alpha})z^{\alpha}|&\leq
f(0)+2\big(1-f(0)\big)\sum_{k=1}^{\infty}\kappa\|z\|_{\infty}^{k}(1+\varepsilon)^{k}
e^{\frac{k-1}{2}}\left(1+\frac{n}{k}\right)^{\frac{k-1}{2}}\\
&\leq
f(0)+2\big(1-f(0)\big)\sum_{k=1}^{\infty}\kappa\left(\|z\|_{\infty}\sqrt{e}(1+\varepsilon)\right)^{k}\left(1+\frac{n}{k}\right)^{\frac{k-1}{2}}.
\end{split}
\end{equation}

Let $\varepsilon\in(0,1/2)$. Set
$\|z\|_{\infty}=(1-2\varepsilon)\sqrt{(\log n)/n}$. Then, as in the
proof of \cite[Section 6]{BPS}, we obtain that
\begin{equation}\label{BPS-proof}
\sum_{k=1}^{\infty}\kappa\left(\|z\|_{\infty}\sqrt{e}(1+\varepsilon)\right)^{k}\left(1+\frac{n}{k}\right)^{\frac{k-1}{2}}
\leq \frac{1}{2}
\end{equation}
for large enough $n$.

Hence we conclude  from (\ref{c-1.20}) and (\ref{BPS-proof}) that
for large enough $n$, we have
\[\sum_{\alpha}|(a_{\alpha}+b_{\alpha})z^{\alpha}|\leq
 f(0)+(1-f(0))=1,\] which
yields that
$$\mathcal{R}_{P}(\mathbb{B}_{\ell_{\infty}^{n}})\geq(1-2\varepsilon)\sqrt{\frac{\log n}{n}}$$
for large enough $n$.

$\mathbf{Claim~ 2.}$
$\limsup_{n\rightarrow\infty}\frac{\mathcal{R}_{P}(\mathbb{B}_{\ell_{\infty}^{n}})}{\sqrt{\frac{\log
n}{n}}}\leq1$.

It is easy to know that if $f\in
\mathscr{H}_{1}(\mathbb{B}_{\ell_{\infty}^{n}})$ with $f(0)=0$, then
$f\in\mathscr{PH}_{+}(\mathbb{B}_{\ell_{\infty}^{n}})$. Moreover, we
observe that if $f\in
\mathscr{H}_{1}(\mathbb{B}_{\ell_{\infty}^{n}})$ with $f(0)\neq0,$
then $e^{-i\arg
f(0)}f\in\mathscr{PH}_{+}(\mathbb{B}_{\ell_{\infty}^{n}})$. Thus, we
can use a similar proof method as in \cite[Remark 1]{BK} to show
that Claim 2 is true. For the sake of completeness, we recall the
proof process. It follows from the Kahane-Salem-Zygmund inequality
(see \cite{BPS,BK}) that there exist a positive constant $C$ and
coefficients $(c_{\alpha})_{|\alpha|=k}$ with
$|c_{\alpha}|=k!/\alpha!$ for all $\alpha$ such that

\[
\sup_{z\in\mathbb{B}_{\ell_{\infty}^{n}}}\left|\sum_{|\alpha|=k}c_{\alpha}z^{\alpha}\right|\leq
C\sqrt{k\log k}(k!)^{\frac{1}{2}}n^{\frac{k+1}{2}},\] and
consequently,

\[
\mathcal{R}_{P}^{k}(\mathbb{B}_{\ell_{\infty}^{n}})n^{k}=\sum_{|\alpha|=k}|c_{\alpha}|\mathcal{R}_{P}^{k}(\mathbb{B}_{\ell_{\infty}^{n}})\leq
\sup_{z\in\mathbb{B}_{\ell_{\infty}^{n}}}\left|\sum_{|\alpha|=k}c_{\alpha}z^{\alpha}\right|\leq
C\sqrt{k\log k}(k!)^{\frac{1}{2}}n^{\frac{k+1}{2}}. \] Hence

\[ \mathcal{R}_{P}(\mathbb{B}_{\ell_{\infty}^{n}})\leq
C^{\frac{1}{k}}\left(\sqrt{k\log
k}\right)^{\frac{1}{k}}\frac{1}{\sqrt{n}}n^{\frac{1}{2k}}(k!)^{\frac{1}{2k}}.\]
If we take $k=[\log n] ~(n\geq3)$ and use Stirling's formula, then
we can get the desired result.

Therefore, this case follows from Claims 1 and 2.
 The proof of this theorem
is completed. \qed


\bigskip

\section*{Acknowledgments}
H. Hamada was partially supported by JSPS KAKENHI Grant Number JP19K03553.

\normalsize

\end{document}